\newtheorem{theorem}{Theorem}
\newtheorem*{cor*}{Corollary}
\newtheorem{lemma}{Lemma}
\numberwithin{lemma}{section}
\numberwithin{equation}{section}
\newcommand{\Aut}{\operatorname{Aut}}
\newcommand{\Out}{\operatorname{Out}}
\newcommand{\Inndiag}{\operatorname{Inndiag}}
\newcommand{\diag}{\operatorname{diag}}
\renewcommand\labelenumi{(\roman{enumi})}
\renewcommand\theenumi\labelenumi
\begin{document}

\title[On orders of elements of finite almost simple groups]{On orders of elements of finite almost \\ simple groups with linear or unitary socle}

\author{M.A. Grechkoseeva}

\address{Sobolev Institute of Mathematics, Ac. Koptyuga 4, Novosibirsk, 630090, Russia;
Novosibirsk State University, Pirogova 4, Novosibirsk, 630090, Russia}
\email{grechkoseeva@gmail.com}

\thanks{The work is supported by  Russian Science Foundation (project 14-21-00065)}

\begin{abstract}
We say that a finite almost simple $G$ with socle $S$ is {\it admissible} (with respect to the spectrum) if $G$ and $S$ have the same sets of orders of elements.
Let $L$ be a finite simple linear or unitary group of dimension at least three over a field of odd characteristic.
We describe admissible almost simple groups with socle $L$. Also we calculate the orders of elements of the coset
$L\tau$, where $\tau$ is the inverse-transpose automorphism of $L$.
\end{abstract}

\keywords{Almost simple group, linear group, unitary group, orders of elements, inverse-transpose map.}
\subjclass[2010]{20D06, 20D60}

\maketitle

\section{Introduction}

The {\em spectrum} $\omega(G)$ of a finite group $G$ is the set of orders of its elements, and groups with equal spectra are said to be {\em isospecrtral}.
This paper is a part of a larger investigation devoted to recognition of simple groups by spectrum. To solve the {\em problem of recognition by spectrum} for a  given
finite nonabelian simple group $S$ is to  describe (up to isomorphism) finite groups that are isospectral to $S$. As a  working hypothesis of that investigation, it was conjectured that a finite group $G$ isospectral to a ``sufficiently large'' simple group $S$ must be an almost simple group with socle $S$, i.\,e. $S\leq G\leq \Aut S$. In 2015, this conjecture was proved with the following precise meaning of the term '`sufficiently large'':
$S$ is a linear or unitary group of dimension larger than 44, or $S$ is a symplectic or orthogonal group of dimension larger than 60, or
$S$ is one of the sporadic, alternating and exceptional groups of Lie type other than  $J_2$, $A_6$, $A_{10}$, and $^3D_4(2)$ (see \cite{15VasGr1}).
Thus for a vast class of simple groups, the initial problem of recognition by spectrum was reduced to a more specific problem of describing (up to isomorphism) almost simple groups with socle $S$ that are isospectral to $S$,
and this is the problem we address in this paper.

For brevity, we refer to a finite almost simple group with socle $S$ that is isospectral to $S$ as {\em admissible} for $S$. Clearly we are interested in non-trivial admissible groups, i.\,e. other than $S$ itself.
It is not hard to check that there are no non-trivial admissible groups for the alternating groups. The information collected in \cite{85Atlas} allows to verify that the sporadic groups do not possess not-trivial admissible
groups either. One of the first examples of non-trivial admissible groups was discovered by Mazurov \cite{94Maz.t}: he showed that the finite groups isospectral to $PSL_3(5)$ are exactly $PSL_3(5)$ and its extension by the graph automorphism. Later Zavarnitsine \cite{04Zav} provided an example demonstrating that the number of admissible groups can be arbitrarily large: $PSL_3(7^{3^k})$ has exactly $k+1$ admissible groups, including itself,
and these are precisely extensions by field automorphisms.

Admissible groups are described for all exceptional groups of Lie type (see \cite{16Zve.t} for references),  $PSL_2(q)$ (see \cite{94BrShi}), $PSL_3(q)$ and $PSU_3(q)$ (see \cite{04Zav, 06Zav.t}), classical groups over fields of characteristic 2 \cite{08Gr.t, 13GrShi, 14Zve}, and symplectic and odd-dimensional orthogonal groups over fields of odd characteristic \cite{16Gr.t}. It is worth noting that for all of these groups, there is $\alpha\in \Aut S$ such that $S_0=\langle S, \alpha \rangle$ is admissible and any other admissible group is conjugate in $\Aut S$  to a subgroup of $S_0$; in other words, any admissible group is a cyclic extension of $S$ and
up to isomorphism there is a unique maximal admissible group. Below we will see that not all simple groups satisfy the latter property.

The main result of this paper is a description of admissible groups for linear and unitary groups over fields of odd characteristic (Theorems 1, 2 and 3).
Also we explicitly describe spectra of some almost simple groups with linear or unitary socle (Lemmas \ref{l:reduce}, \ref{l:graph_odd}, and \ref{l:graph_even}).
In the rest of this section, we introduce the notation used in the theorems and then state our results.

Throughout this paper, $p$ is a prime, $F$ is the algebraic closure of the field of order $p$, $H=GL_n(F)$, with matrices acting on row vectors by right multiplication, and $\tau$ is the inverse-transpose map $g\mapsto g^{-\top}$ of $H$. If $q$ is a power of $p$, then $F_q$ denotes the subfield of $F$ of order $q$, $\lambda_q$ denotes a fixed primitive element of $F_q$ and $\varphi_q$ denotes the standard Frobenius endomorphism of $H$ of level $q$, i.\,e. the endomorphism induced by raising matrix entries to the $q$th power. We identify $GL_n(q)$ with $C_H(\varphi_q)$ and $GU_n(q)$ with $C_H(\varphi_q\tau)$.

We write $GL_n^+(q)$ instead of $GL_n(q)$ and $GL_n^-(q)$ instead of $GU_n(q)$ and use a similar agreement for $PGL_n(q)$, $SL_n(q)$, and $PSL_n(q)$. For $\varepsilon\in\{+,-\}$,  we shorten $\varepsilon 1$ to $\varepsilon$ in arithmetic expressions.

As usual, by $(a_1,\dots,a_s)$ we denote the greatest common divisor of positive integers $a_1,\dots,a_s$, and by $[a_1,\dots,a_s]$ we denote their least common multiple. If $a$ and $b$ are positive integers, then
$\pi(a)$ denotes the set of prime divisors of $a$, $(a)_b$ denotes the largest divisor $c$ of $a$ such that $\pi(c)\subseteq \pi(b)$ and $(a)_{b'}$ denotes the number
$a/(a)_b$.

Let $L=PSL_n^\varepsilon(q)$, where $q=p^m$, and define $d=(n,q-\varepsilon)$. We write $\delta=\delta(\varepsilon q)$ to denote the diagonal automorphism of $L$ induced by $\diag(\lambda, 1,\dots,1)$, where $\lambda$ is a primitive $q-\varepsilon$th root of unity in $F_{q^2}$. We denote by $\varphi$ the field automorphism induced by $\varphi_p$. The automorphism induced by $\tau$ is denoted by the same letter. The image of $\alpha\in\Aut L$ in $\Out L$ is denoted by $\overline \alpha$.

When $n\geqslant 3$, the inverse-transpose automorphism is outer and $\Out L$ has the following presentation (we omit overbars for convenience):
\begin{gather*}\langle \delta, \varphi,\tau\mid \delta^d=\varphi^m=\tau^2=[\varphi,\tau]=1, \delta^{\varphi}=\delta^p, \delta^\tau=\delta^{-1}\rangle\quad\text{ if }\varepsilon=+,\\
\langle \delta, \varphi,\tau\mid \delta^d=\tau^2=[\varphi,\tau]=1, \varphi^m=\tau, \delta^{\varphi}=\delta^p, \delta^\tau=\delta^{-1}\rangle\quad\text{if }\varepsilon=-.\end{gather*}

Theorem \ref{t:graph} is concerned with the extension by the inverse-transpose automorphism $\tau$.
A criterion of admissibility of this extension is not very short, so it seems reasonable to write up it separately.

\begin{theorem}\label{t:graph}
Let $L=PSL_n^\varepsilon(q)$, where $n\geqslant 3$, $\varepsilon\in\{+,-\}$ and $q$ is odd, and $G=L\rtimes\langle \tau\rangle$. Then either $\omega(G)=\omega(L)$ or one of the following holds:

\begin{enumerate}
 \item  $n=p^{t-1}+2$ with $t\geqslant 1$, $q\equiv - \varepsilon\pmod{4}$ and $4p^t\in\omega(G)\setminus\omega(L)$;
 \item  $n=2^{t}+1$ with  $t\geqslant 1$, $(n,q-\varepsilon)>1$ and $2(q^{(n-1)/2}-\varepsilon^{(n-1)/2})\in\omega(G)\setminus\omega(L)$;
 \item $n=p^{t-1}+1$ with  $t\geqslant 1$ and $2p^t\in\omega(G)\setminus\omega(L)$;
 \item $n$ is even, $(n)_2\leqslant (q-\varepsilon)_2$, $q\equiv \varepsilon\pmod 4$ and $q^{n/2}+\varepsilon^{n/2}\in\omega(G)\setminus\omega(L)$;
 \item $n$ is even,  $(n)_{2'}>3$, $(n,q-\varepsilon)_{2'}>1$ and $2[q^{(n)_2}-1,q^{n/2-(n)_2}+\varepsilon^{n/2-(n)_2}]\in\omega(G)\setminus\omega(L)$.%
\end{enumerate}
\end{theorem}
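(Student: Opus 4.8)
The plan is to compute $\omega(G)\setminus\omega(L)$ directly and show it is empty unless one of (i)--(v) holds. Since $|G:L|=2$, every element of $G$ lies in $L$ or in the unique nontrivial coset $L\tau$, and every element of $L\tau$ has even order; moreover if $x\in L\tau$ then $x^2\in L$, no odd power of $x$ is trivial, and hence $|x|=2\,|x^2|$ --- and the same identity holds modulo a central subgroup of the ambient matrix group. So $\omega(G)=\omega(L)\cup\{\,2a : a=|y|\text{ for some }y\in L\text{ that is a square of a coset element}\,\}$, and it suffices to describe, modulo the centre, the set of elements of $L$ occurring as such squares and then compare the doubled set with $\omega(L)=\omega(PSL_n^\varepsilon(q))$, which is known from the description of Buturlakin and Grechkoseeva. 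The first step, via Lemma~\ref{l:reduce}, is to pass to the matrix level: a coset element lifts to $x=g\tau$ in $GL_n^\varepsilon(q)\langle\tau\rangle$, one has $x^2=g\,g^{-\top}\in SL_n^\varepsilon(q)$, and the transition from $|x^2|$ in $GL_n^\varepsilon(q)$ to the order of the image in $L$ costs only a controlled factor expressible through $d=(n,q-\varepsilon)$.

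Next I would analyse $x=g\tau$ by Jordan decomposition in the disconnected algebraic group $H\rtimes\langle\tau\rangle$ (legitimate since $p$ is odd and the image of $x$ in the component group has order $2$), writing $x=su$ with $s$ semisimple in the coset $H\tau$, $u$ unipotent in $H$, $[s,u]=1$ and $u\in C_{H}(s)^\circ$. The connected centraliser $C_{H}(s)^\circ$ is a direct product of three kinds of factors: eigenvalues $\nu\ne\pm1$ of $s^2$ pair up with $\nu^{-1}$ and contribute general linear factors $GL_{a_i}(F)$; the $1$-eigenspace of $s^2$ carries a symmetric form and contributes an orthogonal factor $O_b(F)$; the $(-1)$-eigenspace of $s^2$ carries an alternating form and contributes a symplectic factor $Sp_c(F)$; the whole product is $F$-stable of total dimension at most $n$. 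Taking $F$-fixed points turns this into a product of groups $GL_{a_i}^{\varepsilon_i}(q^{d_i})$, $O_b^{\pm}(q)$ and $Sp_c(q)$, and $|x|$ equals $2$ times the least common multiple of an order chosen from each factor, taken over all such configurations and reduced modulo the centre. Collecting the resulting numbers --- each of the form $[\,p^{a},\,q^{e_1}-\varepsilon^{e_1},\dots\,]$ or $[\,p^{a},\,q^{e_1}+\varepsilon^{e_1},\dots\,]$ built from the torus and unipotent structure of the linear, orthogonal and symplectic factors --- produces an explicit candidate set for $\{|x|:x\in L\tau\}$, the maximal unipotent blocks available in the $O$ and $Sp$ factors together with the small-order torsion ($-I$ and the $s^2=-I$ block) eventually accounting for $2p^{t}$ and $4p^{t}$, and the semisimple tori of those factors for $2(q^{(n-1)/2}-\varepsilon^{(n-1)/2})$, $q^{n/2}+\varepsilon^{n/2}$ and the mixed $2[q^{(n)_2}-1,\,q^{n/2-(n)_2}+\varepsilon^{n/2-(n)_2}]$. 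For $\varepsilon=-$ one may equally run the whole computation through Shintani descent, which identifies the coset $GU_n(q)\tau$ with conjugacy classes over a linear group and is convenient for tracking $2$-parts; the auxiliary Lemma~\ref{l:graph_odd} records the outcome of this computation.

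The last and hardest step is the comparison: for each candidate order one must decide whether it already belongs to $\omega(PSL_n^\varepsilon(q))$, and this is where essentially all the work lies. One runs through the Buturlakin--Grechkoseeva description of $\omega(L)$ --- itself a catalogue of least common multiples $[\,p^{a},q^{e_1}-\varepsilon^{e_1},\dots\,]$ constrained by a dimension budget and by determinant/centre conditions --- and isolates exactly the configurations for which a coset order fails to be of that form. The five exceptional families are precisely the resulting borderline cases: (i) and (iii) come from a maximal unipotent block in an orthogonal or symplectic factor whose $4$- or $2$-torsion cannot be matched inside $L$, whence the congruences $q\equiv-\varepsilon$, $q\equiv\varepsilon\pmod 4$ and the exact dimensions $p^{t-1}+2$, $p^{t-1}+1$; (ii) and (iv) come from semisimple coset elements of order $2(q^{(n-1)/2}-\varepsilon^{(n-1)/2})$ and $q^{n/2}+\varepsilon^{n/2}$ whose presence in $\omega(L)$ is obstructed by the value of $d$ or by $q\bmod 4$, giving $(n,q-\varepsilon)>1$ and $(n)_2\le(q-\varepsilon)_2$; and (v) is the mixed block, governed by $(n)_{2'}>3$ and $(n,q-\varepsilon)_{2'}>1$. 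I expect the genuine obstacle throughout to be the bookkeeping --- keeping simultaneous track of $2$-parts, of the divisor $d$ absorbed by the centre, and of the many residue conditions on $n$ and $q$ --- rather than any single conceptual point; the algebraic-group (or Shintani) machinery feeding the candidate list is comparatively routine.
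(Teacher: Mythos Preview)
Your overall strategy matches the paper's: compute the coset spectrum $\omega(L\tau)$ by squaring into $L$, identify the resulting set with spectra of symplectic and orthogonal groups via the structure of $gg^{-\top}$, and then compare term by term with the Buturlakin list for $\omega(L)$. Your disconnected--algebraic--group centraliser picture is exactly the conceptual content behind the paper's use of Wall's classification (Lemmas~\ref{l:resolve}--\ref{l:orth}); the paper simply packages the same orthogonal/symplectic dichotomy for the $\pm1$--eigenspaces of $gg^{-\top}$ at the level of conjugacy-class parameters rather than centralisers. For odd $n$ the two routes are interchangeable and lead to Lemma~\ref{l:graph_odd}.

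The gap is in even $n$. Your sentence that the passage from $GL_n$ to $L=PSL_n$ ``costs only a controlled factor expressible through $d$'' is not correct: it is not a divisor adjustment but a \emph{constraint on which $h=gg^{-\top}$ occur}. A coset element of $L\tau$ lifts to $g\tau$ with $\det g$ an $n$th power, hence a square when $n$ is even; so one must work inside $\Gamma_n^{\square}(q)=\{gg^{-\top}:\det g\in(F_q^\times)^2\}$ rather than all of $\Gamma_n(q)$. The paper's Lemma~\ref{l:sg}(iii) shows that for $h$ with no $\pm1$ eigenvalue this square-determinant condition is equivalent to $-h$ lying in $\Omega_n^\varepsilon(q)$ rather than merely $SO_n^\varepsilon(q)$, via an explicit spinor-norm calculation; this is precisely why Lemma~\ref{l:graph_even} gives $\omega_{p'}(\tau L)=2\cdot\omega_{p'}(P\Omega_n^+(q))\cup2\cdot\omega_{p'}(P\Omega_n^-(q))$ while $\omega(\tau PGL_n(q))=2\cdot\omega(PSp_n(q))$. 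Without this spinor-norm step your candidate list for even $n$ is the $PSp_n$ list, which is strictly too large, and the comparison with $\omega(L)$ would produce spurious exceptions (for example $2p(q^{(n-2)/2}\pm1)$ when $(n-2)/2$ is odd, cf.\ \eqref{e:l}). You need an analogue of Lemma~\ref{l:sg} in your framework --- concretely, a computation tying $\det g\bmod(F_q^\times)^2$ to the isometry type and spinor norm of the orthogonal factor in $C_H(s)^\circ$ --- before the even-$n$ comparison can be carried out.
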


Theorems \ref{t:main} and \ref{t:mainu} describe admissible groups appealing to Theorem \ref{t:graph}. As we mentioned,  the admissible groups for $PSL_3(q)$ and $PSU_3(q)$ were determined by Zavarnitsine \cite{04Zav, 06Zav.t},
so we do not consider these groups. However, we include them into the statements of the theorems for completeness. Observe that two almost simple groups with socle $S$ are isomorphic if and only if their images in $\Out S$ are conjugate.
Thus to describe admissible groups up to isomorphism, it is sufficient to describe their images in $\Out S$ up to conjugacy. We refer to $\alpha\in \Aut S$ as admissible if $\langle S,\alpha\rangle$ is admissible.

\begin{theorem}\label{t:main}
Let $L=PSL_n(q)$, where $n\geqslant 3$,  $q=p^m$ and $p$ is an odd prime.
Let $d=(n,q-1)$, $b=((q-1)/d,m)_d$, $\eta=\delta^{(d)_{2'}}$, $\phi=\varphi^{m/(b)_2}$, and
$$\psi=\begin{cases}\varphi^{m/(b)_{2'}}&\text{if $n\neq 4$ or $12\nmid q+1$};\\ \varphi^{(m)_{3'}}&\text{if $n=4$ or $12\mid q+1$}.\end{cases}$$
Suppose that  $L<G\leq \Aut L$. Then $\omega(G)=\omega(L)$ if and only if $G/L$ is conjugate in
$\Out L$ to a subgroup of $\langle \overline\alpha\rangle$, where $\alpha$ is one of the elements specified in Table $1$.
\end{theorem}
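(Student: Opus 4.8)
The overall strategy is to reduce the description of admissible groups to a problem purely about the lattice of subgroups of $\Out L$ and then to settle each node of that lattice by a spectrum comparison. I would start by recalling, via Lemma~\ref{l:reduce}, that $\omega(\langle S,\alpha\rangle)$ for a non-trivial $\alpha$ is controlled by the orders of elements in the cosets generated by the field, diagonal-field, and graph-field automorphisms; this lets me treat $\langle S,\alpha\rangle$ coset-by-coset. The key structural observation is that $\Out L$ is metacyclic-by-$\langle\tau\rangle$, so every subgroup is generated by at most two elements, one of which may be taken in $\langle\delta,\varphi\rangle$ and one involving $\tau$. By Theorem~\ref{t:graph} we already know exactly when the extension by (a conjugate of) $\tau$ is admissible, so the new content is to determine which cyclic subgroups of $\langle\delta,\varphi\rangle$, and which of their $\tau$-containing extensions, give no new element orders. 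The definitions of $\eta$, $\phi$, $\psi$, and $b$ in the statement are precisely the ``minimal admissible witnesses'' in the diagonal, even-field, and odd-field directions respectively, and the content of Table~1 is that every admissible $\alpha$ is conjugate into $\langle\eta,\phi,\psi,\ldots\rangle$ with the stated $\tau$-modifications.

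The core computational step is a pair of inclusions. For the ``if'' direction I must show $\omega(\langle L,\alpha\rangle)\subseteq\omega(L)$ for each $\alpha$ listed in Table~1; since $\omega(L)\subseteq\omega(\langle L,\alpha\rangle)$ is automatic, this gives equality. Here I would use the standard description of element orders in almost simple linear/unitary groups: an element of $L\varphi^k$ (or $L\delta^j\varphi^k$, or the graph analogues) has an order computable from the action of the corresponding Frobenius-twisted map on $H=GL_n(F)$, ultimately reducing to orders of semisimple and unipotent elements in smaller-dimensional linear and unitary groups over subfields together with a bounded $p$-part. The delicate point is the field automorphism $\varphi^{m/(b)_2}$ and $\varphi^{m/(b)_{2'}}$: one has to verify that raising to the relevant prime-power level does not create an element whose order exceeds the bound coming from $\omega(L)$, and this is exactly where the number $b=((q-1)/d,m)_d$ enters — it measures how much of the field degree $m$ can be ``absorbed'' by the diagonal part without enlarging the spectrum. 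The special case $n=4$ or $12\mid q+1$ for $\psi$ reflects an exceptional coincidence of orders in dimension $4$ (related to $q^2+q+1$ and $q^2-q+1$ factorizations), and will need separate bookkeeping.

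For the ``only if'' direction I must exhibit, for every $\alpha$ whose image is \emph{not} conjugate into $\langle\overline\alpha_0\rangle$ for any $\alpha_0$ in Table~1, an explicit element of $\omega(\langle L,\alpha\rangle)\setminus\omega(L)$. The plan is to climb the subgroup lattice of $\Out L$ from the bottom: it suffices to do this for the \emph{minimal} subgroups not covered by the table, because spectrum only grows under passing to larger groups. Concretely, for each prime $r$ I consider $\langle L,\delta^{j}\varphi^{k}\rangle$ with $\delta^j\varphi^k$ of order $r$ modulo the admissible part, and produce a bad element order using a suitable regular semisimple element twisted by the field or diagonal action; the graph cases are then handled by combining this with Theorem~\ref{t:graph}. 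The main obstacle, and the part requiring the most care, is the interaction between the diagonal automorphisms and the field automorphisms when $d>1$: a field automorphism that is inadmissible by itself may become admissible after multiplying by a power of $\delta$ (this is the $b$-phenomenon), and conversely, so one cannot treat the $\varphi$-direction and the $\delta$-direction independently. Managing these mixed cosets $L\delta^j\varphi^k$ uniformly across all residues of $n$ modulo powers of $p$ and of $q-\varepsilon$ modulo $4$ — while keeping the dimension-$4$ exception and the $n=p^{t-1}+1,2^t+1$ graph exceptions from Theorem~\ref{t:graph} in play — is the real technical heart of the argument, and I expect it to consume the bulk of the proof.
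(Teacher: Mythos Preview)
Your overall strategy---reduce to cosets via Lemma~\ref{l:reduce}, verify the ``if'' direction element-by-element for the $\alpha$ in Table~1, and for the ``only if'' direction exhibit bad orders for minimal non-admissible subgroups---is sound and aligns with the paper's philosophy. But the paper organizes the case analysis by a structural decomposition that you are missing, and without it your lattice-climbing will become unmanageable. The first move in the paper is Lemma~\ref{l:diag}: if $G\cap\Inndiag L>L$ then $\omega(G)\neq\omega(L)$; this forces any admissible $G/L$ to intersect $\langle\overline\delta\rangle$ trivially and hence to be \emph{abelian}. From there (Lemma~\ref{l:direct}) one shows that $G/L$ is conjugate into the direct product $\langle\overline\psi\rangle\times S_2$, where $S_2$ is the Sylow $2$-subgroup of $\Out L$. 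This cleanly separates the odd-order part (handled completely by the pre-existing Lemmas~\ref{l:oddpart} and~\ref{l:oddpart4}) from the $2$-part, and the latter is then dealt with by a finite case analysis inside $S_2$ (Lemmas~\ref{l:evenfield}, \ref{l:evendfield}, \ref{l:S2l}). The final step is to check that a product of an admissible odd part and an admissible $2$-part is again admissible, which is built into those lemmas. Your plan has no analogue of the abelian/cyclic reduction, and without it you have no a~priori reason that an admissible $G/L$ is even cyclic (which the conclusion $G/L\leqslant\langle\overline\alpha\rangle$ requires).

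One concrete correction: you describe $\eta$ as a ``minimal admissible witness'' in the diagonal direction, but $\eta$ is a power of $\delta$ and is \emph{never} admissible on its own---that is exactly the content of Lemma~\ref{l:diag}. The element $\eta$ only enters as a twist of certain $\phi$-powers (e.g.\ $\phi\tau\eta$), and there it can rescue admissibility in cases where the untwisted $\phi$ fails (notably when $n=p^{t-1}+1$; compare Lemmas~\ref{l:evenfield} and~\ref{l:evendfield}). So your remark that ``a field automorphism inadmissible by itself may become admissible after multiplying by a power of $\delta$'' is correct and important, but the converse picture---$\eta$ as an independent admissible direction---is wrong. Likewise, $b$ does not measure ``absorption of field degree by the diagonal part''; it is literally the bound on the order of an admissible field automorphism (Lemma~\ref{l:oddpart}), with $(b)_{2'}$ and $(b)_2$ governing the odd and even field pieces separately.
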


\begin{table}
\caption{}

\begin{tabular}{|l|l|c|cccc|}
\hline

\multicolumn{2}{|c|}{\multirow{2}*{Conditions on $L$}}&\multicolumn{5}{|c|}{$\alpha=\gamma\beta=\beta\gamma$}\\
\cline{3-7}
\multicolumn{2}{|c|}{}&$\gamma$&\multicolumn{4}{|c|}{$\beta$}\\
\hline
$n=p^{t}+1$, &$(b)_2>2$ or $(n)_2<(p-1)_2$& $1$&\multicolumn{4}{|c|}{$\varphi^{m/2}\tau\eta$}\\
\cline{2-7}
$n\neq 2^u+2$&$(m)_2=2$, $(n)_2<(p+1)_2$& $1$&\multicolumn{4}{|c|}{$\varphi^{m/2}\eta$}\\
\hline
$n\neq p^{t}+1$,&{$\tau$ not admissible, $|\psi|>1$}&$\psi$&\multicolumn{4}{|c|}{$1$}\\
\cline{2-7}
$b$ odd&{$\tau$ admissible}&$\psi$&\multicolumn{4}{|c|}{$\tau$}\\
\hline
\multirow{4}{2.4cm}{$n\neq p^{t}+1$, $b$~even, \\$p\equiv\epsilon\,(\text{mod}\,4)$}&$n=p^{s}+2^u+1$&$\psi$&$\phi$,& $\phi^i\tau$&&\\
\cline{2-7}
&$n\neq p^{s}+2^u+1$,$(n)_2\geqslant(p-\epsilon)_2$&$\psi$&$\phi$,& $\phi^i\tau$,&$\phi^{2j}\tau\eta$&\\
\cline{2-7}
&$n\neq p^{s}+2^u+1$, $(n)_2<(p-\epsilon)_2$, $\epsilon=+$&$\psi$&$\phi$, &$\phi^i\tau$,&$\phi^{2j}\tau\eta$,&$\phi\tau\eta$\\
\cline{2-7}
&$n\neq p^{s}+2^u+1$, $(n)_2<(p-\epsilon)_2$, $\epsilon=-$&$\psi$&$\phi$, &$\phi^i\tau$,&$\phi^{2j}\tau\eta$,&$\phi\eta$\\
\hline
\multicolumn{2}{|c|}{$t,u>0$, $s\geqslant 0$}&\multicolumn{2}{c}{}&\multicolumn{1}{c}{$2i\mid (b)_2$,}&$4j\mid (b)_2$&\\
\hline

\end{tabular}
\end{table}

As an example of applying Theorem \ref{t:main}, let us consider admissible groups for $PSL_4(25)$. For this group, $d=4$, $b=2$, $\phi=\varphi$, $n\neq p^t+1$ and $n=p^0+2^1+1$. Thus
up to isomorphism there are two non-trivial admissible groups, namely, the extensions by $\varphi$ and by $\varphi\tau$. Clearly they are both maximal.

\begin{theorem}\label{t:mainu}
Let $L=PSU_n(q)$, where $n\geqslant 3$,  $q=p^m$ and $p$ be an odd prime. Let $d=(n,q+1)$, $b=((q+1)/d,m)_d$ and $$\psi=\begin{cases}\varphi^{2m/(b)_{2'}}&\text{if $n\neq 4$ or $12\nmid q-1$};\\ \varphi^{2(m)_{3'}}&\text{if $n=4$ and $12\mid q-1$}.\end{cases}$$
Suppose that $L<G\leq \Aut L$. Then $\omega(G)=\omega(L)$ if and only if $n-1$ is not a power of $p$ and $G/L$
is conjugate in $\Out L$ to a subgroup of $\langle \overline\alpha\rangle$, where
\begin{enumerate}
\item $\alpha=\psi$ if $\tau$ is not admissible and $|\psi|>1$;
\item $\alpha=\psi\tau$ if $\tau$ is admissible, $(n)_2>2$ and $n\geqslant 16$;
\item $\alpha=\psi\varphi^{(m)_{2'}}$ if $\tau$ is admissible and either $(n)_2\leqslant 2$ or $n\leqslant 12$.
\end{enumerate}
\end{theorem}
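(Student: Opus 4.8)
The plan is to translate the statement into arithmetic via Lemmas~\ref{l:reduce}, \ref{l:graph_odd}, \ref{l:graph_even} and Theorem~\ref{t:graph}, and then to run a case analysis organized by the structure of $\Out L=\langle\overline\delta\rangle\rtimes\langle\overline\varphi\rangle$ (cyclic of order $d$ extended by the cyclic group of order $2m$). Two cheap reductions come first. Since a subgroup of an admissible group is admissible and isomorphic almost simple groups have conjugate images in $\Out L$, it suffices to find, up to conjugacy in $\Out L$, the maximal $\overline\beta$ with $L\langle\beta\rangle$ admissible; and since $\langle\overline\varphi^{\,a}\rangle\supseteq\langle\overline\varphi^{\,b}\rangle$ whenever $a\mid b$, within each coset $L\delta^j\varphi^k$ one only needs the field exponent of smallest order that still yields an admissible extension. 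Lemma~\ref{l:reduce} then expresses $\omega(L\langle\delta^j\varphi^k\rangle)$ through element orders in subfield subgroups $PSU_n(q_0)\langle\delta_0^{\,j'}\rangle$ with $q_0=p^{m/r}$ ($r$ a prime dividing the order of the field part), so that everything reduces to deciding whether certain explicit numbers $r\cdot s$ lie in $\omega(L)=\omega(PSU_n(q))$.

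Next I would dispose of the purely diagonal--field cosets. The key mechanism is that a field automorphism of prime order $r$ together with a regular unipotent element of $SU_n(q_0)$ fixed by it produces an element of order $r\,p^{\lceil\log_p n\rceil}$ in the extension; since the centralizer in $PSU_n(q)$ of a regular unipotent is a cyclic $p$-group exactly when $n-1$ is a power of $p$, this number is not in $\omega(L)$ in that case, and as $p\nmid d=(n,q+1)$ no diagonal twist repairs it --- this gives one half of the necessity of ``$n-1$ not a power of $p$''. When $n-1$ is not a power of $p$, the precise determination of which $\delta^j\varphi^k$ are admissible is a computation with the torus orders of $PSU_n$, i.e.\ with expressions $[q^{a_1}-\varepsilon^{a_1},\dots,q^{a_t}-\varepsilon^{a_t}]$ and their $d$-parts; it is exactly this computation that throws up the invariant $b=((q+1)/d,m)_d$ and selects $\psi$ as the field automorphism of least order with $L\langle\psi\rangle$ admissible, the case $n=4$, $12\mid q-1$ being the place where the obstruction migrates to the prime $3$ and forces $\psi=\varphi^{2(m)_{3'}}$.

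Then I would bring in the graph component. Theorem~\ref{t:graph} tells me exactly when $\tau$ is admissible; in particular its item~(iii) (with $\varepsilon=-$) supplies $2p^{t}\in\omega(L\langle\tau\rangle)\setminus\omega(L)$ when $n=p^{t-1}+1$, i.e.\ the other half of the necessity of ``$n-1$ not a power of $p$''. Assuming $\tau$ admissible, I must find the element of $L\tau\langle\varphi^2\rangle$ of smallest order giving an admissible extension: using Lemmas~\ref{l:graph_odd} and~\ref{l:graph_even} to write down $\omega(L\langle\psi\tau\rangle)$ and $\omega(L\langle\psi\varphi^{(m)_{2'}}\rangle)$, one checks that for $n\geqslant 16$ with $(n)_2>2$ the former equals $\omega(L)$ while the latter does not, and that for $(n)_2\leqslant 2$ or $n\leqslant 12$ the roles are reversed, so that $\psi\tau$ or $\psi\varphi^{(m)_{2'}}$ respectively is the maximal admissible choice; together with the monotonicity reduction this yields $\alpha$ in cases~(i)--(iii). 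For the converse direction I would check that any $\overline\beta$ not conjugate into $\langle\overline\alpha\rangle$ lies above one of a short list of ``minimal bad'' elements --- a proper power of $\overline\varphi$ strictly below $\overline\psi$, the diagonal $\overline\delta$, $\overline\varphi^{(m)_{2'}}$ in case~(ii), a graph--field automorphism $\overline\varphi^{\,k}\overline\tau$ outside the allowed pattern --- each of which enlarges the spectrum by the computations of the previous steps; applied to every element of a subgroup $K$ with $L\langle K\rangle$ admissible, this forces $K$ (after a short final conjugacy argument) into a conjugate of $\langle\overline\alpha\rangle$.

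I expect the main obstacle to be the converse (``only if'') direction with all borderline cases: one must verify that every extension strictly larger than, or skew to, $L\langle\alpha\rangle$ really does add a new element order, which demands tight control of which products $r\cdot s$ --- unipotent order times field-automorphism order, or torus order times the $2$ coming from $\tau$ --- fail to belong to $\omega(PSU_n(q))$. This is delicate bookkeeping of Jordan block sizes against divisors of $q^{i}-(-1)^{i}$ and their interaction with the quotient by the center of order $d$, and the generic patterns break in the ranges $n\leqslant 12$, $(n)_2=2$, and for $n=4$, where the estimates have to be confirmed by direct inspection.
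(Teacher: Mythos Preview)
Your overall architecture matches the paper's, but there is a genuine gap in the treatment of the $2$-part of $\Out L$. For unitary $L$, a field automorphism $\beta=\varphi^{m/k}$ of even order (so $k\mid (m)_2$, $k>1$) does not reduce via Lemma~\ref{l:reduce} to a unitary subfield coset as you write; the relevant item is~(v), which gives $\omega(\beta\delta^i L)=k\cdot\omega(\tau\delta^i(q_0)PSL_n(q_0))$, a coset of a \emph{linear} group twisted by the graph automorphism. Consequently the ``computation with torus orders of $PSU_n$'' you describe is not what governs the $2$-part, and your list of ``minimal bad'' elements misses the mixed elements $\varphi^{m/k}\eta$ with $k$ a $2$-power. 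Ruling these out is precisely the content of Lemma~\ref{l:graph_diag} (that $k\cdot\omega(\tau\delta(q_0)PSL_n(q_0))\not\subseteq\omega(PSU_n(q))$), which you never invoke; without it you cannot conclude that an admissible $2$-subgroup of $\Out L$ is conjugate into $\langle\overline\varphi^{(m)_{2'}}\rangle$ (Lemma~\ref{l:S2u} in the paper), and hence the reduction to a single cyclic generator $\overline\alpha$ that you assume at the outset is unjustified.

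A smaller point: your ``roles are reversed'' formulation for case~(iii) is wrong. When $(n)_2\leqslant 2$ or $n\leqslant 12$, the element $\psi\tau$ remains admissible (it generates a subgroup of $\langle\psi\varphi^{(m)_{2'}}\rangle$); what changes is that the larger $\psi\varphi^{(m)_{2'}}$ becomes admissible as well and is therefore maximal. The dichotomy between cases~(ii) and~(iii) is exactly the criterion of Lemma~\ref{l:field_u}: for $k>1$ dividing $(m)_2$, the automorphism $\varphi^{m/k}$ is admissible if and only if $(n)_2\leqslant 2$ or $n\in\{4,8,12\}$, and the latter is equivalent to $(n)_2\leqslant 2$ or $n\leqslant 12$.
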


Returning now to the initial recognition problem, we state the following consequence of \cite[Theorem 1]{15Vas} and the above results.

\begin{cor*}
Let $L=PSL_n^\varepsilon(q)$, where $n\geqslant 45$, $\varepsilon\in\{+,-\}$ and $q$ is odd. A finite group is isospectral to $L$ if and only if
it is isomorphic to an almost simple group $G$ with socle $L$ and $G/L=\langle\overline\alpha\rangle$, where $\alpha$ is an identity or is as specified in Theorems $2$ and $3$.
\end{cor*}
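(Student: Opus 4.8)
The plan is to deduce the corollary by combining two facts: the reduction theorem \cite[Theorem 1]{15Vas}, which forces a finite group isospectral to a linear or unitary simple group of large enough dimension to be almost simple with the same socle, and Theorems \ref{t:main} and \ref{t:mainu}, which describe exactly the almost simple groups with socle $L$ that are isospectral to $L$. No new computation is needed; the proof is an assembly of these results.

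For the forward implication, I would take a finite group $G$ with $\omega(G)=\omega(L)$. Since $n\geqslant 45$, the socle $L$ is a linear or unitary simple group of dimension larger than $44$, so \cite[Theorem 1]{15Vas} applies and shows that $G$ is almost simple with socle $L$, i.e. $L\leqslant G\leqslant\Aut L$. If $G=L$, then $G/L$ is trivial ($\alpha$ an identity). Otherwise $L<G$, and as $\omega(G)=\omega(L)$ the group $G$ is admissible for $L$; Theorem \ref{t:main} (if $\varepsilon=+$) or Theorem \ref{t:mainu} (if $\varepsilon=-$) then shows that $G/L$ is conjugate in $\Out L$ to a subgroup of $\langle\overline\alpha\rangle$ for the element $\alpha$ prescribed there. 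Since two almost simple groups with socle $L$ are isomorphic precisely when their images in $\Out L$ are conjugate, $G$ is isomorphic to a group of the form stated in the corollary.

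For the converse, I would observe that if $G$ is an almost simple group with socle $L$ whose image in $\Out L$ is conjugate to a subgroup of the group $\langle\overline\alpha\rangle$ prescribed by Theorem \ref{t:main} or \ref{t:mainu}, then the ``if'' part of that theorem yields $\omega(G)=\omega(L)$; and $G=L$ is trivially isospectral to $L$. Hence every group of the form described in the corollary is isospectral to $L$.

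I do not anticipate a real obstacle here: the substantive content lies in Theorems \ref{t:graph}, \ref{t:main}, \ref{t:mainu} and in \cite[Theorem 1]{15Vas}. The only points to check are that the bound $n\geqslant 45$ falls within the hypothesis of \cite[Theorem 1]{15Vas}, and that the conclusion ``conjugate to a subgroup of $\langle\overline\alpha\rangle$'' of Theorems \ref{t:main} and \ref{t:mainu} is faithfully rephrased in the ``up to isomorphism'' language used in the corollary.
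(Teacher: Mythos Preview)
Your proposal is correct and matches the paper's approach: the paper does not write out a separate proof of the corollary but simply states it as an immediate consequence of \cite[Theorem~1]{15Vas} together with Theorems~\ref{t:main} and~\ref{t:mainu}, which is precisely the assembly you carry out. The only cosmetic point is the passage from ``$G/L$ conjugate to a subgroup of $\langle\overline\alpha\rangle$'' to ``$G/L=\langle\overline{\alpha'}\rangle$ for some specified $\alpha'$''; this is harmless since subgroups of cyclic groups are cyclic and any subgroup of an admissible group is again admissible, so its generator is itself one of the admissible automorphisms covered by the theorems.
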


\section{Spectra of classical groups and related number-theoretical lemmas}

In this section, we collect necessary information on spectra of classical groups and related number-theoretical
lemmas. Our notation for the classical groups follows that of \cite{85Atlas}. Recall some well-known isomorphisms between classical groups (see, for example, \cite[Proposition 2.9.1]{90KlLie}). If $q$ is odd, then
\begin{equation*} PSp_4(q)\simeq \Omega_5(q), \quad \Omega_6^\varepsilon(q)\simeq SL_4^\varepsilon(q)/\{\pm 1\}, \quad P\Omega_6^\varepsilon(q)\simeq PSL_4^\varepsilon(q),\end{equation*}
\begin{equation} \label{e:small} \Omega_4^+(q)\simeq SL_2(q)\circ SL_2(q), \quad P\Omega_4^+(q)\simeq PSL_2(q)\times PSL_2(q), \quad \Omega_4^-(q)\simeq PSL_2(q^2),\end{equation}
\begin{equation*} Sp_2(q)\simeq SL_2(q), \quad \Omega_3(q)\simeq PSL_2(q).\end{equation*}

Given a prime $r$, we write $\omega_{r'}(G)$ to denote the set of orders of elements of $G$ that are coprime to  $r$. In particular, if $G$ is a group of Lie type over a field of characteristic $p$, then $\omega_{p'}(G)$ is the set of orders of semisimple elements of $G$. By $\omega_{\tilde r}(G)$ we denote the difference
$\omega(G)\setminus\omega_{r'}(G)$.

\begin{lemma}\label{l:spec_an}
Let $n\geqslant 2$, $q$ be a power of a prime $p$, $\varepsilon\in\{+,-\}$ and let $G$ be $PGL_n^\varepsilon(q)$ or $PSL_{n}^\varepsilon(q)$.
Let $d=1$ in the first case and $d=(n,q-1)$ in the second case. Then $\omega(G)$ consists of all divisors of the following numbers:
 \begin{enumerate}
 \item $(q^n-\varepsilon^n)/((q-\varepsilon)d)$;
 \item $[q^{n_1}-\varepsilon^{n_1}, q^{n_2}-\varepsilon^{n_2}]/(n/(n_1,n_2), d)$, where  $n_i>0$ and $n_1 + n_2= n$;
 \item $[q^{n_1}-\varepsilon^{n_1}, \dots, q^{n_s}-\varepsilon^{n_s}]$, where $s\geqslant2$, $n_i>0$ and $n_1 + \dots + n_s = n$;
 \item $p^t(q^{n_1}-\varepsilon^{n_1})/d$, where $t\geqslant 1$, $n_1>0$ and $p^{t-1}+1+ n_1= n$;
\item $p^t[q^{n_1}-\varepsilon^{n_1}, \dots, q^{n_s}-\varepsilon^{n_s}]$, where $t\geqslant 1$, $s\geqslant2$, $n_i>0$ and $p^{t-1}+1+ n_1 + \dots + n_s = n$;
\item $(n,q-1)p^t/d$ if $n=p^{t-1} + 1$ for some $t \geqslant 1$.
\end{enumerate}

\end{lemma}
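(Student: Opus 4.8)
The plan is to compute the orders inside $\widehat G=GL_n^\varepsilon(q)$, acting on $V=F_q^n$ (equipped with a nondegenerate hermitian form when $\varepsilon=-$), and then push the answer through the quotients $\widehat G\to PGL_n^\varepsilon(q)$ and $SL_n^\varepsilon(q)\to PSL_n^\varepsilon(q)$. For $g\in\widehat G$ with Jordan decomposition $g=su=us$ one has $|g|=|s|\,|u|$, where $(|s|,p)=1$ and $|u|$ is a power of $p$. I would decompose $V$ into the primary components of $g$, which fall into families indexed by the $\langle\varphi_q\rangle$- (resp.\ $\langle\varphi_q\tau\rangle$-) orbits of the eigenvalues of $s$: the family of an orbit of degree $d$ occupies a $g$-invariant subspace of dimension $dm$ for some multiplicity $m$, and on it, after extending scalars to $F_{q^d}$, $g$ acts as a scalar of order dividing $q^d-\varepsilon^d$ times a unipotent element of $GL_m(q^d)$ (the unitary case needing the usual care about self-conjugate orbits). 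Writing $d_1,\dots,d_r$ and $m_1,\dots,m_r$ for these degrees and multiplicities, one has $\sum_j d_j m_j=n$ and $|g|=[e_1,\dots,e_r,p^{t_1},\dots,p^{t_r}]$ with $e_j\mid q^{d_j}-\varepsilon^{d_j}$ and with each $t_j$ either $0$ or satisfying $p^{t_j-1}+1\leqslant m_j$, since a unipotent element of $GL_m(q^d)$ of order $p^t$ needs a Jordan block of size greater than $p^{t-1}$.

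Next I would cut the list of relevant shapes $(d_j,m_j,t_j)$ down to the extremal ones. As the least common multiple of several powers of $p$ is the largest of them, at most one $t_j$ need be nonzero; as a component of degree $d$ and multiplicity $m$ uses $dm$ dimensions but contributes only one factor dividing $q^d-\varepsilon^d$, every purely semisimple component may be taken of multiplicity $1$; and as loading a degree-one eigenvalue of order dividing $q-\varepsilon$ (a number that divides every $q^{n_i}-\varepsilon^{n_i}$) onto the unipotent-carrying component costs no extra dimensions, the unipotent part may be taken to be one Jordan block of size exactly $p^{t-1}+1$ inside a degree-one component. Thus $\omega(\widehat G)$ consists of the divisors of the numbers $[q^{n_1}-\varepsilon^{n_1},\dots,q^{n_s}-\varepsilon^{n_s}]$ with $s\geqslant 1$ and $n_1+\dots+n_s=n$ together with the divisors of $p^t[q^{n_1}-\varepsilon^{n_1},\dots,q^{n_s}-\varepsilon^{n_s}]$ with $t\geqslant 1$, $s\geqslant 0$ and $n_1+\dots+n_s=n-p^{t-1}-1$ (with $s=0$ giving $p^t$ alone); conversely each such number is realized by a block-diagonal element assembled from Singer generators of the cyclic subgroups $C_{q^{n_i}-\varepsilon^{n_i}}\leqslant GL_{n_i}^\varepsilon(q)$ and, when $t\geqslant 1$, one regular unipotent block.

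It remains to pass to $PGL_n^\varepsilon(q)$ and $PSL_n^\varepsilon(q)$, and this is the delicate part. Modulo the central subgroup $Z=Z(\widehat G)\cong C_{q-\varepsilon}$ one has $|gZ|=|g|/k$, where $k$ is least with $g^k\in Z$; since $(|Z|,p)=1$ the $p$-part of the order survives, while the $p'$-part is divided by a divisor of $q-\varepsilon$ that one reads off from the orbit data --- for a Singer torus $C_{q^n-\varepsilon^n}$ the centre lies inside it, which produces item~(i); for a two-part torus the forced loss is $(n/(n_1,n_2),q-\varepsilon)$, which gives item~(ii); for the semisimple-times-unipotent shapes it is a divisor of $d$, which gives items~(iv) and~(vi); while for shapes with $s\geqslant 2$ semisimple parts one can run a single Singer generator over its whole cyclic group with the overall least common multiple unchanged, so there is no loss (items~(iii) and~(v)). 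Passing further to $PSL_n^\varepsilon(q)=SL_n^\varepsilon(q)/(Z\cap SL_n^\varepsilon(q))$ amounts to requiring $\det g$ to be an $n$th power in the determinant group (a cyclic group of order $q-\varepsilon$), so that $gZ$ lies in $PSL_n^\varepsilon(q)$, and then dividing by $|Z\cap SL_n^\varepsilon(q)|=d$; here the inputs are that the relevant norm maps onto the determinant group are surjective and that with at least two nontrivial cyclic factors one may correct the determinant into the subgroup of $n$th powers without lowering the order. I expect this last bookkeeping --- determining, for each torus type, the exponent of the image of $T\cap SL_n^\varepsilon(q)$ in $PSL_n^\varepsilon(q)$ and checking that every divisor of it actually occurs, and likewise for the mixed elements --- to be the main obstacle, since it is precisely where all the greatest-common-divisor corrections in~(i)--(vi) originate; the purely semisimple part of this is essentially classical, and the work lies in organizing the arithmetic.
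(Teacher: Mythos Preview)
The paper does not prove this lemma at all: it simply cites Buturlakin \cite[Corollaries 2 and 3]{08But.t}. So there is no in-paper argument to compare with; what you have written is an outline of how one would actually establish the result, and it follows essentially the same route as Buturlakin's original proof --- Jordan decomposition in $GL_n^\varepsilon(q)$, reduction to block-diagonal elements built from Singer cycles and a single regular unipotent block, and then the passage to the projective quotients.

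Your outline for $\omega(GL_n^\varepsilon(q))$ is correct, and your identification of the delicate step is accurate: all the content is in computing, for each torus type $T$, the exponent of the image of $T$ (resp.\ $T\cap SL_n^\varepsilon(q)$) in the projective group and checking attainment. One small wobble in your exposition: when you write that ``for a two-part torus the forced loss is $(n/(n_1,n_2),q-\varepsilon)$'' in the paragraph on $PGL$, that quantity is actually the loss in $PSL$ (in $PGL$ there is no loss, as item~(ii) with $d=1$ shows; with $s\geqslant 2$ cyclic factors one can always hit the full lcm modulo the diagonal centre). For $PSL$ your formula is right, since $(n/(n_1,n_2),d)=(n/(n_1,n_2),q-\varepsilon)$ because $n/(n_1,n_2)\mid n$. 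Apart from that, the sketch is sound; what remains is exactly the arithmetic bookkeeping you flag, and for that the cleanest route is indeed to consult Buturlakin's paper rather than redo it.
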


\begin{proof}[{\bf Proof}] See \cite[Corollaries 2 and 3]{08But.t}.

\end{proof}

\begin{lemma}\label{l:spec_cn_bn}
Let $n\geqslant 1$, $q$ be a power of an odd prime $p$ and let $G$ be one of the groups $Sp_{2n}(q)$, $PSp_{2n}(q)$, and $\Omega_{2n+1}(q)$. Let $d=c=1$ if $G=Sp_{2n}(q)$;
$d=2$ and $c=1$ if $G=PSp_{2n}(q)$ or $G=\Omega_5(q), \Omega_3(q)$; and $d=c=2$ if $G=\Omega_{2n+1}(q)$ and $n\geqslant 3$. Then  $\omega(G)$ consists of all divisors of the following numbers:

\begin{enumerate}
 \item $(q^n\pm 1)/d$;
 \item $[q^{n_1}\pm 1, \dots, q^{n_s}\pm1]$, where $s\geqslant2$, $n_i>0$ and $n_1 + \dots + n_s = n$;
 \item $p^t(q^{n_1}\pm 1)/c$, where $t\geqslant 1$, $n_1>0$ and $p^{t-1}+1+ 2n_1 = 2n$;
\item $p^t[q^{n_1}\pm 1, \dots, q^{n_s}\pm 1]$, where $t\geqslant 1$, $s\geqslant2$, $n_i>0$ and $p^{t-1}+1+ 2n_1 + \dots + 2n_s = 2n$;
\item $2p^t/d$ if $2n=p^{t-1} + 1$ for some  $t \geqslant 1$.
\end{enumerate}
\end{lemma}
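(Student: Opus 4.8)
The plan is to reduce the three cases to a single analysis of element orders in $Sp_{2n}(q)$ and then transfer the answer to $PSp_{2n}(q)$ and $\Omega_{2n+1}(q)$ by keeping track of the centre and a small index. Since $Sp_{2n}(q)$ is a group of Lie type in characteristic $p$, every element $g$ has a Jordan decomposition $g=su=us$ with $s$ semisimple and $u$ unipotent, and $|g|=|s|\cdot p^{t}$ where $p^{t}=|u|$. So the problem splits into describing $\omega_{p'}(Sp_{2n}(q))$, the orders of semisimple elements, and, for each $t\geqslant 1$, describing which $p'$-numbers arise as $|s|$ for an $s$ that commutes with a unipotent element of order $p^{t}$.

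For the semisimple part I would argue geometrically on the natural symplectic module $V$ of dimension $2n$. A semisimple $s$ yields an orthogonal decomposition of $V$ into nondegenerate symplectic subspaces on which $s$ acts with an irreducible characteristic polynomial of even degree $2n_{i}$, together with hyperbolic pairs $U_{j}\oplus U_{j}^{*}$ of totally isotropic subspaces of dimension $n_{j}$; equivalently, $s$ lies in a maximal torus of $Sp_{2n}(q)$, and the classes of maximal tori are parametrised by the conjugacy classes of the Weyl group $W(C_{n})$, that is, by ordered pairs of partitions. On the $i$th nondegenerate summand $s$ runs through a cyclic group of order $q^{n_{i}}+1$, and on $U_{j}\oplus U_{j}^{*}$ through one of order $q^{n_{j}}-1$, so $|s|$ divides some $[q^{n_{1}}\pm 1,\dots,q^{n_{s}}\pm 1]$ with $\sum n_{i}=n$, and every such value is realised. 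Separating the case of a single summand, which gives $q^{n}\pm 1$, from that of at least two summands yields items (1) and (2).

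For the mixed part I would use that, for odd $p$, the unipotent classes of $Sp_{2n}(q)$ correspond to the partitions of $2n$ in which odd parts have even multiplicity, and that a unipotent element whose largest Jordan block has size $\ell$ has order $p^{\lceil\log_{p}\ell\rceil}$. To obtain order exactly $p^{t}$ one needs $p^{t-1}<\ell\leqslant p^{t}$; since shrinking Jordan blocks only frees dimension for the commuting semisimple part, the extreme orders come from a single block of size $\ell=p^{t-1}+1$, which is even because $p$ is odd and so spans one nondegenerate symplectic summand of dimension $p^{t-1}+1$, the complementary symplectic space of dimension $2n-(p^{t-1}+1)$ carrying a commuting semisimple element treated as above. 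This produces items (3) and (4), while the boundary case $2n=p^{t-1}+1$, in which $V$ is a single Jordan block and the only commuting semisimple element is the central involution, produces item (5).

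Finally I would transfer the list. For $PSp_{2n}(q)=Sp_{2n}(q)/\{\pm 1\}$ one checks, torus by torus and class by class, when the relevant cyclic subgroup of $Sp_{2n}(q)$ meets the centre: it does in the one-summand semisimple case, which forces the factor $1/d$ in item (1), but not when there are at least two summands or a unipotent part present, so there is no such factor in items (2) and (4) and $c=1$ in item (3), and the $p$-part is never affected. For $\Omega_{2n+1}(q)$ I would repeat the argument on the natural $(2n+1)$-dimensional orthogonal module --- the eigenvalue $1$ now always contributes a fixed vector, so the combinatorics matches that of $Sp_{2n}(q)$ with the dimension count shifted by one --- and then pass from $SO_{2n+1}(q)$ to its index-two subgroup $\Omega_{2n+1}(q)$ via the spinor norm, which is what forces $c=2$ for $n\geqslant 3$; the small cases follow from $\Omega_{5}(q)\cong PSp_{4}(q)$ and $\Omega_{3}(q)\cong PSL_{2}(q)$. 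The hard part is exactly this last bookkeeping: deciding which element orders survive intact in the quotient and in the index-two subgroup, and certifying that the five families listed are complete. (All of this is in fact available in the literature, so one could instead simply cite the determination of the spectra of the finite symplectic and orthogonal groups over a field of odd characteristic, in parallel with the reference already used for Lemma~\ref{l:spec_an}.)
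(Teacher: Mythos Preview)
The paper's proof of this lemma is nothing more than a citation: it refers to Corollaries~1,~2 and~6 of Buturlakin's paper on spectra of symplectic and orthogonal groups for $n\geqslant 2$, and for $n=1$ invokes the isomorphisms $Sp_2(q)\simeq SL_2(q)$, $\Omega_3(q)\simeq PSL_2(q)$ together with Lemma~\ref{l:spec_an}. Your final parenthetical remark is therefore \emph{exactly} what the paper does.

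What precedes that remark is a reasonable outline of the argument that the cited reference actually carries out: Jordan decomposition, the parametrisation of maximal tori of $Sp_{2n}$ by signed partitions giving the semisimple orders, the partition description of unipotent classes with the minimal block $p^{t-1}+1$ governing the mixed orders, and then the bookkeeping for the passage to $PSp_{2n}(q)$ and to $\Omega_{2n+1}(q)$. As a sketch this is sound; the one place where your language is a little loose is the claim that the cyclic group generated by a semisimple element ``meets the centre'' only in the single-summand case --- of course $-1$ lies in every maximal torus, and the real issue is whether the projective order drops, which requires a slightly more careful choice of representative in the multi-summand case. You also correctly flag that the delicate point is the spinor-norm computation forcing $c=2$ in $\Omega_{2n+1}(q)$ for $n\geqslant 3$. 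None of this is a gap so much as an acknowledgement that a full proof along these lines is a paper-length undertaking, which is precisely why the author simply cites Buturlakin.
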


\begin{proof}[{\bf Proof}]  See \cite[Corollaries 1, 2 and 6]{10But.t} for $n\geqslant 2$ and (\ref{e:small}) together with Lemma \ref{l:spec_an} for $n=1$.

\end{proof}

\begin{lemma}\label{l:spec_dn}
Let $n\geqslant 2$,  $q$ be a power of an odd prime $p$ and $\varepsilon\in\{+,-\}$. Then   $\omega_{p'}(\Omega_{2n}^\varepsilon(q))$ consists of all divisors of the following numbers:
\begin{enumerate}
\item $(q^n-\varepsilon)/2$;
\item $[q^{n_1}-\kappa_1, \dots, q^{n_s}-\kappa_s]$, where $s\geqslant 2$, $\kappa_i\in\{+,-\}$, $n_i>0$, $n_1+\dots+n_s=n$ and $\kappa_1\kappa_2\dots\kappa_s=\varepsilon$;
\end{enumerate}
and $\omega_{p'}(P\Omega_{2n}^\varepsilon(q))$ consists of all divisors of the following numbers:

\begin{enumerate}
\item $(q^n-\varepsilon)/(4,q^n-\varepsilon)$;
\item $[q^{n_1}-\kappa, q^{n_2}-\varepsilon\kappa]/e$, where $\kappa\in\{+,-\}$, $n_1,n_2>0$, $n_1+n_2=n$;  $e=2$ when $(q^{n_1}-\kappa)_2=(q^{n_2}-\varepsilon\kappa)_2$ and $e=1$ otherwise;
\item $[q^{n_1}-\kappa_1, \dots, q^{n_s}-\kappa_s]$, where $s\geqslant 3$, $\kappa_i\in\{+,-\}$, $n_i>0$, $n_1+\dots+n_s=n$ and $\kappa_1\kappa_2\dots\kappa_s=\varepsilon$.
\end{enumerate}
\end{lemma}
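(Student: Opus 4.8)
The plan is to realise $\omega_{p'}(G)$ as the set of orders of the semisimple elements of $G$, reduce that to a list of maximal tori, and then carry out the arithmetic that passes from the full orthogonal group down to $\Omega_{2n}^\varepsilon(q)$ and $P\Omega_{2n}^\varepsilon(q)$. First I would recall that $\omega_{p'}(G)$ is the set of orders of semisimple elements of $G$, that every semisimple element of a finite group of Lie type lies in some maximal torus, and conversely that a maximal torus consists of semisimple elements. Since for a finite abelian group $A$ the set of orders of elements of $A$ is exactly the set of divisors of $\exp(A)$ (pass to the primary decomposition), this gives
\[\omega_{p'}(G)=\bigcup_{T}\{k:\ k\mid\exp(T)\},\]
the union taken over representatives of the conjugacy classes of maximal tori $T$ of $G$. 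So the problem reduces to listing the groups $T$ up to isomorphism and reading off their exponents.

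Next I would invoke the standard description of the maximal tori of the orthogonal groups (see \cite[Sections 2 and 4]{90KlLie}, and the torus data underlying \cite{10But.t}): up to conjugacy the maximal tori of $GO_{2n}^\varepsilon(q)$ are parametrised by signed cycle types $((n_1,\kappa_1),\dots,(n_s,\kappa_s))$ with $n_i\geqslant1$, $\sum_i n_i=n$, $\kappa_i\in\{+,-\}$ and $\kappa_1\cdots\kappa_s=\varepsilon$, the torus attached to such data being isomorphic to $\prod_i C_{q^{n_i}-\kappa_i}$, so that its exponent is $[q^{n_1}-\kappa_1,\dots,q^{n_s}-\kappa_s]$; intersecting with $SO$ changes nothing here. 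The content is then the passage from $SO$ to $\Omega$ (intersect with the kernel of the spinor norm $\theta$) and from $\Omega$ to $P\Omega$ (factor out $Z=\Omega\cap\{\pm1\}$, a group of order $1$ or $2$). For $\Omega$ I would show that on the single cyclic torus of order $q^n-\varepsilon$ the spinor norm is onto $F_q^*/(F_q^*)^2$ (here $q$ is odd), which accounts for the factor $1/2$ in item (i); whereas when $s\geqslant2$ the subgroup $\ker\theta\cap T$, although of index $2$ in $T$, still has exponent $[q^{n_1}-\kappa_1,\dots,q^{n_s}-\kappa_s]$, because the index-$2$ defect can be absorbed inside one cyclic block without lowering its contribution to the least common multiple; this yields item (ii) for $\Omega_{2n}^\varepsilon$. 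For $P\Omega$ I would then determine when factoring out $-1$ lowers the exponent: on the long cycle one loses a factor up to $4$ (hence $(q^n-\varepsilon)/(4,q^n-\varepsilon)$ in item (i)), for $s\geqslant3$ nothing is lost (item (iii)), and for exactly two blocks a factor $2$ is lost precisely when $(q^{n_1}-\kappa)_2=(q^{n_2}-\varepsilon\kappa)_2$, which is the correction $e$ in item (ii). The cases $n=2,3$ can be cross-checked against \eqref{e:small}, the isomorphism $P\Omega_6^\varepsilon(q)\simeq PSL_4^\varepsilon(q)$, and Lemma~\ref{l:spec_an}.

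I expect the main obstacle to be exactly this last, $2$-adic, bookkeeping: tracking, for every parametrised torus, the $2$-part of the exponent of the spinor-norm kernel and of its image modulo $-1$, so as to recover precisely the denominators $2$, $e$ and $(4,q^n-\varepsilon)$; the interaction between the spinor norm and the central quotient in the two-block case is the delicate point. An alternative that avoids most of this is to quote the already-known description of the full spectra of the simple groups of types $D_n$ and ${}^2D_n$ (the even-dimensional orthogonal analogue of \cite{08But.t} and \cite{10But.t}) and simply discard the element orders divisible by $p$, namely those carrying a factor $p^t$; what remains is exactly the list in the statement.
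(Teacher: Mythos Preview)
Your sketch is sound: reducing $\omega_{p'}$ to exponents of maximal tori, invoking the signed-cycle parametrisation, and then tracking the $2$-adic effect of passing to the spinor-norm kernel and to the central quotient would indeed produce the stated lists. The delicate two-block computation you flag is exactly the place where the denominator $e$ arises, and your outline of why $s\geqslant 3$ costs nothing and $s=1$ costs a factor of $(4,q^n-\varepsilon)$ is correct.

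However, the paper does not carry out this computation at all. Its proof is a two-line citation: for $n\geqslant 4$ it invokes \cite[Theorem~6]{07ButGr.t}, where the maximal-torus analysis has already been done; for $n=2,3$ it appeals to the exceptional isomorphisms in (\ref{e:small}) together with Lemma~\ref{l:spec_an}. In other words, the paper takes precisely the ``alternative'' you mention in your final sentence---quote the existing literature on spectra (or rather on maximal tori) of $D_n$ and ${}^2D_n$ and be done---rather than your primary route of redoing the spinor-norm bookkeeping from scratch. Your approach buys self-containment at the price of reproducing a nontrivial chunk of \cite{07ButGr.t}; the paper's approach is a black-box citation, which is appropriate since the lemma is background rather than a contribution of the paper.
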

\begin{proof}[{\bf Proof}] 
For $n\geqslant 4$, see \cite[Theorem 6]{07ButGr.t}. If $n=2,3$, the assertion follows from the isomorphisms given in  (\ref{e:small}) and Lemma \ref{l:spec_an}.
\end{proof}

Let $k\geqslant 3$ and $q$ be an integer whose absolute value is greater than one. 
A primitive prime divisor of $q^k-1$ is a prime $r$ such that $r$ divides $q^k-1$ and does not divide $q^i-1$ for  any $i<k$. The set of primitive prime divisors of $q^k-1$ is denoted by $R_k(q)$, and 
$r_k(q)$ denotes some fixed element of $R_k(q)$.

\begin{lemma}[ (Zsigmondy {\cite{Zs}})]\label{l:zsi}
Let $q$ be a prime power, $\varepsilon\in\{+,-\}$, and $k\geqslant 3$. Suppose that $(q,\varepsilon,k)\neq (2,+,6)$, $(2,-,3)$. Then $R_k(q)$ is not empty.
\end{lemma}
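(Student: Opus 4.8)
The statement is the classical theorem of Zsigmondy, and the plan is to recover it by the standard cyclotomic-polynomial argument (so that the proof ultimately reduces to the reference \cite{Zs}). Writing $q^k-1=\prod_{j\mid k}\Phi_j(q)$, where $\Phi_j$ denotes the $j$th cyclotomic polynomial, one has $r\in R_k(q)$ precisely when the multiplicative order of $q$ modulo $r$ equals $k$; every such $r$ divides $\Phi_k(q)$, and conversely a prime $r\mid\Phi_k(q)$ either lies in $R_k(q)$ or divides $k$. Hence it suffices to produce a prime divisor of $\Phi_k(q)$ that is coprime to $k$. I would isolate two ingredients: first, the ``intrinsic prime'' lemma, namely that at most one prime divisor $P$ of $k$ can divide $\Phi_k(q)$, and when it does $P$ is the largest prime divisor of $k$ and $P^2\nmid\Phi_k(q)$ (for $k\geqslant 3$ there is no further exceptional behaviour); and second, the estimate $\Phi_k(q)>(q-1)^{\deg\Phi_k}$ for $k\geqslant 2$, which follows from $\Phi_k(q)=\prod_{\zeta}(q-\zeta)$ taken over primitive $k$th roots of unity together with $|q-\zeta|\geqslant q-1$ and a strict gain since no primitive root equals $1$.

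Granting these two facts, if $R_k(q)$ were empty then $\Phi_k(q)$ would equal $1$ or the largest prime divisor $P$ of $k$, whence $(q-1)^{\deg\Phi_k}<\Phi_k(q)\leqslant P\leqslant k$. Since $\deg\Phi_k\geqslant 2$ for $k\geqslant 3$, this leaves only a short finite list of pairs $(q,k)$, which I would go through one by one, computing $\Phi_k(q)$ directly; the sole surviving case with $k\geqslant 3$ turns out to be $q=2$, $k=6$, where $\Phi_6(2)=3$ divides $6$ and $2^6-1=9\cdot 7$ has no primitive prime divisor. This settles the case $\varepsilon=+$, the exceptional triple being $(2,+,6)$. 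For $\varepsilon=-$ I would rerun the argument verbatim, replacing $q^k-1$ by $q^k-\varepsilon^k$ and $\Phi_k$ by the factor of $q^{2k}-1$ that is coprime to $q^k-1$ (this is $\Phi_{2k}$ when $k$ is odd, and the analogous cyclotomic factor for even $k$); the unique exception with $k\geqslant 3$ is then $q=2$, $k=3$, which again traces back to $\Phi_6(2)=3$ and gives the triple $(2,-,3)$.

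The routine part is by far the longer part, so the only genuinely delicate points are the intrinsic-prime lemma --- in particular ruling out $P^2\mid\Phi_k(q)$, which is where the classical small exceptions ($k=2$ with $q$ a Mersenne number, here excluded since $k\geqslant 3$) originate --- and sharpening the size bound enough to dispose of the case $q=2$, where $(q-1)^{\deg\Phi_k}=1$ is useless and one instead uses that $\Phi_k(2)$ grows like $2^{\deg\Phi_k}$ (for instance via Jensen's formula), reducing again to checking finitely many small $k$. Everything else is bookkeeping.
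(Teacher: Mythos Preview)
The paper does not give a proof of this lemma at all: it is stated as Zsigmondy's classical result with a bare citation to \cite{Zs}, and the text moves on immediately to the next lemma. So there is nothing to compare your argument against --- you have supplied far more than the paper does.

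Your sketch is the standard cyclotomic-polynomial proof and is essentially sound. A couple of small remarks. First, your treatment of the $\varepsilon=-$ case is vaguer than it needs to be: rather than ``rerunning the argument'', it is cleaner to observe that $R_k(-q)$ coincides with $R_{2k}(q)$, $R_{k/2}(q)$, or $R_k(q)$ according as $k$ is odd, $k\equiv 2\pmod 4$, or $4\mid k$, which reduces the minus case to the plus case already handled and makes the exception $(2,-,3)\leftrightarrow(2,+,6)$ transparent. Second, your handling of $q=2$ (where the crude bound $(q-1)^{\varphi(k)}$ collapses) is only gestured at; the usual fix is the inequality $\Phi_k(2)\geqslant 2^{\varphi(k)}-1$ obtained from $|2-\zeta|\geqslant 1$ together with a slightly sharper estimate for the product, which still leaves only finitely many $k$ to check by hand. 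None of this affects correctness, only polish.
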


Two following results are well-known.

\begin{lemma}\label{l:gcd}
Let $q>1$, $k$ and $l$ be positive integers, and  $\varepsilon\in\{+,-\}$. Then
\begin{enumerate}
\item $(q^k-1,q^l-1)=q^{(k,l)}-1$;
\item $(q^k+1,q^l+1)$ is equal to $q^{(k,l)}+1$ if $(k)_{2}=(l)_{2}$ and to $(2,q+1)$ otherwise;
\item $(q^k-1,q^l+1)$ is equal to $q^{(k,l)}+1$ if $(k)_{2}>(l)_{2}$ and to $(2,q+1)$ otherwise;
\item $((q^k-\varepsilon^k)/(q-\varepsilon),k)=(q-\varepsilon,k)$;
\item if $(k,l)=1$, then $(q^l-\varepsilon^l)/(q-\varepsilon)$ divides $(q^{lk}-\varepsilon^{kl})/(q^k-\varepsilon^k)$  and $(q^l-\varepsilon^l)/(n,q-\varepsilon)$ divides $(q^{lk}-\varepsilon^{lk})/(n,q^k-\varepsilon^k)$ for any positive integer $n$.
\end{enumerate}
\end{lemma}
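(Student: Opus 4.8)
The plan is to treat the five assertions essentially independently, since each reduces to an elementary manipulation of cyclotomic-type expressions. For (i), I would use the classical fact that $q^k-1$ and $q^l-1$ have gcd $q^{(k,l)}-1$: write $k,l$ via the Euclidean algorithm and observe that if $k=l+r$ then $q^k-1=q^r(q^l-1)+(q^r-1)$, so $(q^k-1,q^l-1)=(q^l-1,q^r-1)$, and iterate; the base case is immediate. This is the workhorse identity and the other parts will be deduced from it.

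For (ii) and (iii), I would reduce plus-expressions to minus-expressions by the trick $q^k+1=(q^{2k}-1)/(q^k-1)$. Thus $(q^k+1,q^l+1)$ divides $(q^{2k}-1,q^{2l}-1)=q^{(2k,2l)}-1=q^{2(k,l)}-1$, and one then decides, by comparing $2$-adic valuations of $k$ and $l$, whether the common divisor is the full $q^{(k,l)}+1$ (when $(k)_2=(l)_2$, so that $k/(k,l)$ and $l/(k,l)$ are both odd and $q^{(k,l)}+1$ genuinely divides both $q^k+1$ and $q^l+1$) or collapses to $(2,q+1)$ (when one of the two quotients is even, forcing any common odd-indexed structure to disappear and leaving only the obvious factor dividing $q+1$ when $q$ is odd). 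Part (iii) is the same bookkeeping with $(q^k-1,q^l+1)$, splitting on whether $(k)_2>(l)_2$. The only care needed is the characteristic-$2$ degenerate constant, which is handled by the explicit $(2,q+1)$.

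For (iv), set $N=(q^k-\varepsilon^k)/(q-\varepsilon)=1+\varepsilon q+\dots+(\varepsilon q)^{k-1}$ and reduce this modulo any prime $r$ dividing $k$: if $r\mid q-\varepsilon$ then $\varepsilon q\equiv 1\pmod r$ and $N\equiv k\pmod r$, so $r\mid N$ iff $r\mid k$; and a valuation count (lifting-the-exponent) shows the $r$-part of $N$ equals the $r$-part of $k$ exactly when $r\mid q-\varepsilon$, while if $r\nmid q-\varepsilon$ then $r\nmid N$. Collecting over all $r\mid k$ gives $(N,k)=(q-\varepsilon,k)$. For (v), with $(k,l)=1$, the divisibility $(q^l-\varepsilon^l)/(q-\varepsilon)\mid (q^{lk}-\varepsilon^{lk})/(q^k-\varepsilon^k)$ follows because $q^{lk}-\varepsilon^{lk}=(q^k)^l-(\varepsilon^k)^l$ makes the right side equal to $1+\varepsilon^k q^k+\dots+(\varepsilon^k q^k)^{l-1}$, and a direct comparison (or the observation that both sides are products of cyclotomic factors $\Phi_d$ with $d\mid l$, $d\nmid 1$ dividing the corresponding factors with $d\mid lk$) gives the claim; the second divisibility, with the extra parameter $n$, is obtained by the same cyclotomic factorization after noting that $(n,q-\varepsilon)$ and $(n,q^k-\varepsilon^k)$ interact with these factors compatibly, using $(k,l)=1$ to guarantee no overlap of indices.

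The routine obstacle, rather than a genuine difficulty, is keeping the $2$-adic and $r$-adic valuation bookkeeping in (ii)--(iv) completely correct, including the degenerate constants $(2,q+1)$ in even characteristic; since the paper only ever applies these identities with $q$ odd (or as auxiliary integer identities), these edge cases are harmless, but they must be stated. I expect the whole lemma to be dispatched with a one-line reference, as in the preceding lemmas, since all five identities are standard; if a proof is wanted, the cyclotomic factorization $q^k-1=\prod_{d\mid k}\Phi_d(q)$ together with the prime-by-prime valuation arguments above suffices.
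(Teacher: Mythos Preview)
Your expectation is exactly right: the paper does not prove this lemma at all, merely introducing it (together with the next lemma) with the sentence ``Two following results are well-known.'' Your sketches for (i)--(iii) and the first divisibility in (v) are the standard arguments and are correct.

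There is, however, a genuine gap in your treatment of (iv), and it stems from the fact that item (iv) as printed contains a typo. Your intermediate claim ``if $r\nmid q-\varepsilon$ then $r\nmid N$'' is false: with $q=2$, $\varepsilon=+$, $k=6$, $r=3$ one has $q-\varepsilon=1$ while $N=(2^6-1)/1=63$ is divisible by $3$. In fact $(N,k)=(63,6)=3\neq 1=(q-\varepsilon,k)$, so the identity $((q^k-\varepsilon^k)/(q-\varepsilon),k)=(q-\varepsilon,k)$ is simply not true as written. The intended (and genuinely well-known) statement is
\[
\bigl((q^k-\varepsilon^k)/(q-\varepsilon),\,q-\varepsilon\bigr)=(k,\,q-\varepsilon),
\]
with $q-\varepsilon$ rather than $k$ as the second argument on the left-hand side. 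This corrected version is what the paper actually invokes later (in the proof of the lemma on $q^{n/2}+\varepsilon^{n/2}$, where one computes a gcd of $N$ with $q-\varepsilon$ and $n$). Your own reduction already proves the corrected form in one line: since $\varepsilon q\equiv 1\pmod{q-\varepsilon}$, the sum $N=1+\varepsilon q+\cdots+(\varepsilon q)^{k-1}$ satisfies $N\equiv k\pmod{q-\varepsilon}$, whence $(N,q-\varepsilon)=(k,q-\varepsilon)$. No lifting-the-exponent is needed.
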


\begin{lemma}\label{l:r-part}
Let $q>1$ and $k$ be positive integers and $\varepsilon\in\{+,-\}$.
\begin{enumerate}
\item If an odd prime $r$ divides $q-\varepsilon$, then $(q^k-\varepsilon^k)_{r}=(k)_{r}(q-\varepsilon)_{r}$.
\item If an odd prime $r$ divides $q^k-\varepsilon^k$, then $r$ divides $q^{(k)_{r'}}-\varepsilon^{(k)_{r'}}$.
\item If $4$ divides $q-\varepsilon$ and $k$ is odd, then $(q^k-\varepsilon^k)_{2}=(k)_{2}(q-\varepsilon)_{2}$.
\end{enumerate}
\end{lemma}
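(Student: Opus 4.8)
The plan is to derive all three assertions directly from the Lifting-the-Exponent phenomenon rather than to cite it, since the cases needed are short. Throughout I write $v_r$ for the $r$-adic valuation, so that $(a)_r=r^{v_r(a)}$ for a positive integer $a$, and I use $\varepsilon^{-1}=\varepsilon$ (as $\varepsilon^2=1$) to treat the two signs uniformly; note that $q>1$ keeps every $q^i-\varepsilon^i$ nonzero. A preliminary observation used in (i) and (ii): $r\nmid q$, because $r\mid q$ together with $r\mid q-\varepsilon$ (resp.\ $r\mid q^k-\varepsilon^k$) would force $r\mid\varepsilon$ (resp.\ $r\mid\varepsilon^k=\pm1$), impossible for $r$ odd; in (iii), $q$ is odd by hypothesis.

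For (i) I would put $a=q$, $b=\varepsilon\in\{1,-1\}$, so that $q^k-\varepsilon^k=a^k-b^k$ with $r\nmid ab$ and $r\mid a-b$, and invoke the odd-prime Lifting-the-Exponent lemma $v_r(a^k-b^k)=v_r(a-b)+v_r(k)$. (For a self-contained version, induct on $v_r(k)$: when $v_r(k)=0$ the quotient $(a^k-b^k)/(a-b)\equiv k\,a^{k-1}\not\equiv 0\pmod r$; the step uses $X^r-Y^r=(X-Y)(X^{r-1}+\dots+Y^{r-1})$ and the fact that, when $r\mid X-Y$ and $r\nmid XY$, the second factor is exactly divisible by $r$.) Passing to $r$-parts turns $v_r(q^k-\varepsilon^k)=v_r(q-\varepsilon)+v_r(k)$ into the claimed $(q^k-\varepsilon^k)_r=(q-\varepsilon)_r(k)_r$.

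For (iii), since $k$ is odd I would not even need the $r=2$ case of the lemma: from $q^k-\varepsilon^k=(q-\varepsilon)\sum_{i=0}^{k-1}q^i\varepsilon^{k-1-i}$ the second factor is a sum of an odd number of odd integers, hence odd, so $v_2(q^k-\varepsilon^k)=v_2(q-\varepsilon)$; as $(k)_2=1$, this is $(q^k-\varepsilon^k)_2=(k)_2(q-\varepsilon)_2$ (the hypothesis $4\mid q-\varepsilon$ plays no role in this argument, but costs nothing). For (ii) I would instead work in $(\mathbb{Z}/r\mathbb{Z})^\times$: set $u\equiv q\varepsilon\pmod r$, legitimate since $r\nmid q$. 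Then for each $i\geqslant 1$, $r\mid q^i-\varepsilon^i$ if and only if $u^i\equiv 1\pmod r$, i.e.\ $\operatorname{ord}_r(u)\mid i$. By hypothesis $\operatorname{ord}_r(u)\mid k$; also $\operatorname{ord}_r(u)\mid r-1$, so $\operatorname{ord}_r(u)$ is coprime to $r$ and hence to $(k)_r$, which forces $\operatorname{ord}_r(u)\mid k/(k)_r=(k)_{r'}$. Therefore $u^{(k)_{r'}}\equiv 1$, i.e.\ $r\mid q^{(k)_{r'}}-\varepsilon^{(k)_{r'}}$, as required. Everything here is elementary, so I do not anticipate a genuine obstacle; the only points needing care are the uniform bookkeeping for the two signs $\varepsilon=\pm$, and — should one choose to reprove Lifting-the-Exponent in place of citing it — the base case and the ``exactly divisible by $r$'' claim in its inductive step.
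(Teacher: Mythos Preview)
Your proof is correct. All three parts are handled cleanly: (i) is the standard Lifting-the-Exponent argument for odd primes (and your inductive sketch is accurate, including the ``exactly divisible by $r$'' step); (ii) is a straightforward order computation in $(\mathbb{Z}/r\mathbb{Z})^\times$; and (iii) follows from the factorization $q^k-\varepsilon^k=(q-\varepsilon)\sum_{i=0}^{k-1}q^i\varepsilon^{k-1-i}$ with the second factor odd. One minor remark on (iii): you say the hypothesis $4\mid q-\varepsilon$ ``plays no role,'' but you do use that $q$ is odd (which it implies) to conclude each summand is odd; only the full strength $4\mid q-\varepsilon$ is unused.

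As for comparison with the paper: there is nothing to compare. The paper introduces this lemma (together with the preceding one) with the phrase ``Two following results are well-known'' and gives no proof at all. Your write-up therefore supplies strictly more than the paper does.
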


\begin{lemma}\label{l:2adj} Let $q$ be odd and $n\geqslant 4$ be even.
Then $q^{n/2}+\varepsilon^{n/2}\in\omega(PSL^\varepsilon_n(q))$ if and only if $(n)_2>(q-\varepsilon)_2$.
\end{lemma}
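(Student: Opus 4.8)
The plan is to read off both directions from Lemma~\ref{l:spec_an}. Write $G=PSL^\varepsilon_n(q)$ with $n\geqslant 4$ even and $q$ odd, and put $d=(n,q-\varepsilon)$ and $k=n/2$. The element order $q^k+\varepsilon^k$ is semisimple, so I only look at parts~(i)--(iii) of Lemma~\ref{l:spec_an}; parts~(iv)--(vi) involve a factor of $p$ and are irrelevant here. For the ``if'' direction, suppose $(n)_2>(q-\varepsilon)_2$. The natural candidate is the partition $n=k+k$ in part~(iii) with $s=2$, which gives the divisor $[q^k-\varepsilon^k,\,q^k+\varepsilon^k]$ of a number in $\omega(G)$; since $q^k+\varepsilon^k$ divides this lcm, it lies in $\omega(G)$. (Alternatively one may use part~(ii) with $n_1=n_2=k$, noting $[q^k-\varepsilon^k,q^k+\varepsilon^k]/(2,d)$ is still divisible by $q^k+\varepsilon^k$ because $q^k+\varepsilon^k$ is even while the whole lcm has $2$-part strictly larger — here is where the hypothesis $(n)_2>(q-\varepsilon)_2$ enters via Lemma~\ref{l:r-part}(iii) applied with the odd complement of $k$.) In any case the ``if'' direction is the easy half: exhibit a partition realizing an element order divisible by $q^k+\varepsilon^k$.

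For the ``only if'' direction, assume $q^k+\varepsilon^k\in\omega(G)$ and derive $(n)_2>(q-\varepsilon)_2$. I would argue by contradiction: suppose $(n)_2\leqslant(q-\varepsilon)_2$. Pick a primitive prime divisor $r=r_{2k}(q^{?})$ — more precisely, I want a prime $r$ whose order of $q$ modulo $r$ is exactly $2k$ (equivalently $r\mid q^k+\varepsilon^k$ but $r\nmid q^i-\varepsilon^i$ for $i<k$ of the appropriate parity); such $r$ exists by Lemma~\ref{l:zsi} (the small exceptional cases $q$ a power of $2$ are excluded since $q$ is odd, so $R$ is nonempty). The point is that if $q^k+\varepsilon^k\in\omega(G)$, then some number $N$ in the list of Lemma~\ref{l:spec_an} is divisible by $q^k+\varepsilon^k$, hence by $r$ and by the full $2$-part $(q^k+\varepsilon^k)_2$. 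A number $N$ divisible by such an $r$ must ``use up'' a block of size $k$ (the primitive divisor forces a part $n_i$ with $k\mid n_i$, and since $\sum n_i=n=2k$ and the parts are positive this means one part equals $k$, or two parts equal $k$, or a single part equals $2k$ which is impossible unless it is part~(i)). So $N$ is forced to be of the shape $[q^k-\kappa,\,q^k+\varepsilon\kappa']$-type or $(q^n-\varepsilon^n)/((q-\varepsilon)d)$, and in each case I compute the $2$-part of $N$ and compare it with $(q^k+\varepsilon^k)_2$.

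The crux — and the step I expect to be the main obstacle — is the $2$-part bookkeeping in this last comparison. Using Lemma~\ref{l:r-part}(i),(iii) one gets $(q^k+\varepsilon^k)_2$ and $(q^k-\varepsilon^k)_2$ explicitly in terms of $(k)_2$, $(q-\varepsilon)_2$, and $(q+\varepsilon)_2$; the subtle point is that when $(n)_2\leqslant(q-\varepsilon)_2$ the number $k=n/2$ has $2$-part small enough that the division by $d$ (in part~(i)) or by $(2,d)$ (in part~(ii)) kills exactly the factor of $2$ one needs, so that the resulting divisor cannot reach $(q^k+\varepsilon^k)_2$. I would split into the cases $k$ odd versus $k$ even, and within $k$ even track whether $4\mid q-\varepsilon$, using Lemma~\ref{l:gcd}(ii),(iii) to handle the $2$-parts of $q^{n_1}\pm1$ in part~(ii). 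Once the arithmetic is set up carefully this is a finite check, but it is where all the work lies; the structural part (reducing to these few partition shapes via a primitive prime divisor) is short.
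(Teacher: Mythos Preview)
Your ``if'' direction does not work as written. In parts (ii) and (iii) of Lemma~\ref{l:spec_an} every factor is of the form $q^{n_i}-\varepsilon^{n_i}$; there is no freedom in the signs. So the partition $n=k+k$ in (iii) gives $[q^k-\varepsilon^k,\,q^k-\varepsilon^k]=q^k-\varepsilon^k$, not $[q^k-\varepsilon^k,\,q^k+\varepsilon^k]$, and the same partition in (ii) gives $(q^k-\varepsilon^k)/(2,d)$. Neither is divisible by $q^k+\varepsilon^k$. You have not produced any element of $\omega(L)$ divisible by $q^k+\varepsilon^k$, so the easy half is in fact missing.

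Your ``only if'' direction is also more complicated than necessary, because you have the primitive-divisor reduction slightly wrong. If $r=r_n(\varepsilon q)$ divides $q^{n_i}-\varepsilon^{n_i}=\pm((\varepsilon q)^{n_i}-1)$, then $n\mid n_i$, so $n_i=n$; the option $n_i=k$ is impossible since $r\mid q^k+\varepsilon^k$ and $r$ is odd, whence $r\nmid q^k-\varepsilon^k$. Thus the \emph{only} number in the list of Lemma~\ref{l:spec_an} that can be divisible by $r$ is the one from part (i), namely $(q^n-1)/((q-\varepsilon)d)$. This observation handles both directions simultaneously: $a=q^{n/2}+\varepsilon^{n/2}\in\omega(L)$ if and only if $a$ divides $(q^n-1)/((q-\varepsilon)d)$, i.e.\ if and only if $d$ divides $(q^{n/2}-\varepsilon^{n/2})/(q-\varepsilon)$. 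This is exactly the route the paper takes; the remaining work is a short $2$-part check (split on the parity of $n/2$ and use Lemma~\ref{l:gcd}(iv) and Lemma~\ref{l:r-part}), much shorter than the case analysis you outline.
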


\begin{proof}[{\bf Proof}]  Let $d=(n,q-\varepsilon)$.
Since $a=q^{n/2}+\varepsilon^{n/2}$ is divisible by a primitive divisor $r_n(\varepsilon q)$, it follows from  \ref{l:spec_an} that $a\in \omega(PSL_n^\varepsilon(q))$ if and only if 
$a$ divides  $$\frac{q^n-1}{(q-\varepsilon)d}=\frac{(q^{n/2}+\varepsilon^{n/2})(q^{n/2}-\varepsilon^{n/2})}{(q-\varepsilon)d};$$ that is, if and only if $d$ divides $(q^{n/2}-\varepsilon^{n/2})/(q-\varepsilon)$.

If $n/2$ is odd, then by Lemma \ref{l:r-part}, we have that $(q^{n/2}-\varepsilon^{n/2})/(q-\varepsilon)$ is odd and, therefore, it is not divisible by $d$, which is even.
If $n/2$ is even and $(n)_2\leqslant (q-\varepsilon)_2$, then $q\equiv \varepsilon\pmod 4$ and $(d)_2=(n)_2$, and hence  $(q^{n/2}-\varepsilon^{n/2})_2/(q-\varepsilon)_2=(n)_2/2<(d)_2$.
Also, if  $(n)_2>(q-\varepsilon)_2$, then $$((q^{n/2}-\varepsilon)/(q-\varepsilon),d)=((q^{n/2}-\varepsilon)/(q-\varepsilon),q-\varepsilon,n)=(n/2,q-\varepsilon,n)=d,$$
where the second equality holds by Lemma \ref{l:gcd}(iv).
\end{proof}

\section{Extensions by field and graph-field automorphisms}

In this section, we derive some formulas concerning orders of elements in extensions of $PSL_n^\varepsilon(q)$ by field or graph-field automorphisms. Following the lines of the proof of \cite[Proposition  13]{06Zav.t}, we will exploit a correspondence between $\sigma$\nobreakdash-\hspace{0pt}conjugacy classes of $C_K(\sigma^k)$ and conjugacy classes of $C_K(\sigma)$, where $K$ is a connected linear algebraic group and $\sigma$ is a Steiberg endomorphism of $K$, i.\,e. a surjective endomorphism with finitely many fixed points. Also we will use a slight modification of this correspondence inspired by \cite[Theorem 2.1]{08GowVin}.

We begin with necessary notation and the Lang--Steinberg theorem. If $K$ is a group and $\sigma$ is an endomorphism of $K$, then we write $K_\sigma$ to denote $C_K(\sigma)$.

\begin{lemma}[Lang--Steinberg]
Let $K$ be a connected linear algebraic group and $\sigma$ be a surjective endomorphism of $K$ such that $K_\sigma$ is finite. Then the map $x\mapsto x^{-\sigma}x$ from $K$ to $K$ is surjective.
\end{lemma}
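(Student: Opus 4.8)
This is the Lang--Steinberg theorem, and the plan is to follow the classical proof. Write $L\colon K\to K$, $L(x)=x^{-\sigma}x$, which is a morphism of varieties. The first step is to describe its fibres: since $\sigma$ is a homomorphism, $L(x)=L(y)$ holds exactly when $\sigma(yx^{-1})=yx^{-1}$, i.e. $y\in K_\sigma x$, so the fibres of $L$ are the right cosets of $K_\sigma$ and are therefore finite. Hence $\dim L(K)=\dim K$, and since $L(K)$ is constructible (Chevalley's theorem) and $K$ is irreducible (being connected), $L(K)$ is dense and contains a nonempty open subset of $K$.

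The second step is to upgrade this to openness of $L(K)$ by showing that $L$ is everywhere \'etale. At the identity one computes $dL_1=\operatorname{id}-d\sigma_1$, whose kernel is the fixed space of $d\sigma_1$ on $\operatorname{Lie}(K)$; this equals $\operatorname{Lie}(K_\sigma)$, which is $0$ because $K_\sigma$ is finite, so $dL_1$ is an isomorphism. For an arbitrary point $a$ I would exploit the identity $L(ax)=x^{-\sigma}L(a)x$: writing points near $a$ as $ax$ with $x$ near the identity, and using left and right translations, $dL_a$ is identified with the differential at the identity of the $\sigma$-twisted conjugation map $x\mapsto x^{-\sigma}L(a)x$. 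The kernel of the latter is $\operatorname{Lie}(K_\tau)$ for the endomorphism $\tau=\operatorname{Inn}(a)^{-1}\circ\sigma\circ\operatorname{Inn}(a)$, which is conjugate to $\sigma$; hence $K_\tau$ is finite and the kernel is again $0$. So $L$ is an open morphism and $L(K)$ is open in $K$.

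The third step is the endgame. The same identity shows that $x^{-\sigma}L(a)x=L(ax)\in L(K)$, so $L(K)$ is a union of $\sigma$-twisted conjugacy classes $\{x^{-\sigma}cx:x\in K\}$, and these classes partition $K$. Repeating the differential computation of step two for the orbit map of an arbitrary $c$ shows that every $\sigma$-twisted conjugacy class is open, provided the corresponding twisted stabilizer --- the fixed-point group of the inner twist $\operatorname{Inn}(c^{-1})\circ\sigma$ --- is finite. Granting this, $K$ is a disjoint union of nonempty open sets, so by connectedness there is only one, and since $L(K)$ is a nonempty union of such sets we conclude $L(K)=K$.

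I expect the main obstacle to be precisely this last finiteness: that every inner twist $\operatorname{Inn}(g)\circ\sigma$ of $\sigma$ again has a finite fixed-point group, equivalently that all $\sigma$-twisted conjugacy classes are full-dimensional. For $c\in L(K)$ this is immediate, since the relevant twisted stabilizer is then a conjugate of $K_\sigma$; in general it rests on the structure theory of Steinberg endomorphisms --- one reduces to a power of the endomorphism that is an honest Frobenius map, where the differential on the Lie algebra vanishes and the conclusion is transparent. As this is standard, an entirely acceptable alternative is to invoke the Lang--Steinberg theorem directly from the literature.
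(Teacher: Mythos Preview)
The paper does not prove this lemma; it simply quotes the Lang--Steinberg theorem and invokes it (only for Frobenius endomorphisms) in the proof of Lemma~3.2. So you have supplied far more than the paper does, and your outline is the classical one.

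The one genuine slip is the step ``$\ker(\mathrm{id}-d\sigma_1)$ equals $\operatorname{Lie}(K_\sigma)$, which is $0$ because $K_\sigma$ is finite''. In positive characteristic the fixed-point \emph{scheme} can be non-reduced even when its set of closed points is finite, so finiteness of $K_\sigma$ as a group does not by itself force the fixed space of $d\sigma$ to vanish. A concrete witness is $K=\mathbb{G}_a$ in characteristic $p$ with $\sigma(x)=x+x^{p}$: here $K_\sigma(\bar k)=\{0\}$, yet $d\sigma_1=\mathrm{id}$, so $\mathrm{id}-d\sigma_1=0$ and your \'etaleness argument collapses (the Lang map $x\mapsto -x^{p}$ is nevertheless surjective). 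For a Frobenius endomorphism this problem evaporates, since some power of $\sigma$ has zero differential and hence $d\sigma$ is nilpotent, making $\mathrm{id}-d\sigma$ invertible --- which is essentially what you say at the end. For an arbitrary Steinberg endomorphism, the finiteness of \emph{every} twisted stabiliser (your acknowledged obstacle) really does require Steinberg's structural reduction, so your fallback of citing the literature is the right move, and is precisely what the paper itself does.
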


Recall that $F$ is the algebraic closure of the field of order $p$ and $\varphi_q$ is the endomorphism of $GL_n(F)$
raising matrix entries to the $q$th power, where $q$ is a power of $p$. An endomorphism $\sigma$ of a linear algebraic group $K$ is said to be a Frobenius endomorphism if there are an identification of $K$ with a closed subgroup of  $GL_n(F)$ and a positive integer $k$ such that $\sigma^k$ is induced by $\varphi_q$.
Clearly, if $\sigma$ is a Frobenius endomorphism, then $\sigma$ is a Steinberg endomorphism.

\begin{lemma}\label{l:shin}
Let $K$ be a connected linear algebraic group over $F$, $\alpha$ be a Frobenius endomorphism of $K$ and
$\tau$ be an automorphism of $K$ of order $2$ that commutes with $\alpha$. Let $k$ be a positive integer, $l\in\{0,1\}$ and let
$\beta$ be the automorphism of $K_{\alpha^k\tau^l}$ induced by $\alpha$.
Given $g\in K_{\alpha^k\tau^l}$, choose  $z\in K$ such that $g=z^{-\alpha}z$ and define $\zeta(g)=z^{\tau^l}z^{-\alpha^k}$. Then 
$\zeta(g)\in K_\alpha$, $(\beta g)^k=z^{-1}\tau^l \zeta(g)z$ and the map $[\beta g]\mapsto [\tau^l\zeta(g)]$ is a one-to-one correspondence between the $K_{\alpha^k\tau^l}$-conjugacy classes in
the coset $\beta K_{\alpha^k\tau^l}$ and the $K_\alpha$-conjugacy classes in the coset $\tau^l K_\alpha$.
\end{lemma}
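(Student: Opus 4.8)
The plan is to split the statement into three assertions and verify each using the Lang--Steinberg theorem as the main engine. First I would check that $\zeta(g)$ is well-defined as an element of $K_\alpha$, i.e. that it lies in the fixed-point subgroup of $\alpha$. Here one uses that $g=z^{-\alpha}z\in K_{\alpha^k\tau^l}$, which means $(z^{-\alpha}z)^{\alpha^k\tau^l}=z^{-\alpha}z$. A direct manipulation, rearranging this identity and invoking the commutativity $[\alpha,\tau]=1$ and $\tau^2=1$, should yield $\zeta(g)^\alpha=\zeta(g)$, where $\zeta(g)=z^{\tau^l}z^{-\alpha^k}$. I would also record that a different choice $z'$ with $g=z'^{-\alpha}z'$ differs from $z$ by left multiplication by an element of $K_\alpha$ (since $z'z^{-1}$ is $\alpha$-fixed), and trace how this changes $\zeta(g)$: it should conjugate $\tau^l\zeta(g)$ by an element of $K_\alpha$, so that the class $[\tau^l\zeta(g)]$ in $K_\alpha$ is independent of the choice of $z$; this is precisely what makes the map in the statement well-defined on classes.

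Next I would establish the power formula $(\beta g)^k = z^{-1}\tau^l\zeta(g)z$. Since $\beta$ is the automorphism of $K_{\alpha^k\tau^l}$ induced by $\alpha$, one has $\beta g = \alpha g$ as an element of the coset $\alpha K_{\alpha^k\tau^l}$ inside $K\rtimes\langle\alpha,\tau\rangle$ (or inside an appropriate semidirect product); then $(\alpha g)^k = \alpha^k\cdot g^{\alpha^{k-1}}g^{\alpha^{k-2}}\cdots g^{\alpha}g$. Substituting $g=z^{-\alpha}z$ makes this product telescope: $g^{\alpha^{i}} = (z^{-\alpha}z)^{\alpha^i} = z^{-\alpha^{i+1}}z^{\alpha^i}$, so the product collapses to $z^{-\alpha^k}z$, giving $(\alpha g)^k = \alpha^k z^{-\alpha^k}z = z^{-1}\alpha^k z$. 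To convert $\alpha^k$ into $\tau^l\zeta(g)$ I would use that on $K_{\alpha^k\tau^l}$ the endomorphism $\alpha^k\tau^l$ acts trivially, hence $\alpha^k = \tau^l$ as operators when restricted appropriately; combined with $\zeta(g)=z^{\tau^l}z^{-\alpha^k}$ and careful bookkeeping of where each factor acts, this should produce $z^{-1}\tau^l\zeta(g)z$ exactly. This step requires being scrupulous about the ambient group in which these products are formed and about the distinction between $\alpha$ as an endomorphism of $K$ and $\beta$ as an automorphism of $K_{\alpha^k\tau^l}$.

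Finally, for the bijection between $K_{\alpha^k\tau^l}$-classes in $\beta K_{\alpha^k\tau^l}$ and $K_\alpha$-classes in $\tau^l K_\alpha$, I would proceed in the standard $\sigma$-twisted-conjugacy style. The map $[\beta g]\mapsto[\tau^l\zeta(g)]$ is well-defined on classes by the first step plus an additional check: if $g' = h^{-1}(\beta g)h = h^{-\alpha}gh$ for $h\in K_{\alpha^k\tau^l}$, then picking $z' = zh$ one has $g' = z'^{-\alpha}z'$ and $\zeta(g') = (zh)^{\tau^l}(zh)^{-\alpha^k} = h^{\tau^l}\zeta(g)(h^{-\alpha^k})$, which equals $h^{\tau^l}\zeta(g)h^{-\tau^l}$ since $h\in K_{\alpha^k\tau^l}$ forces $h^{\alpha^k}=h^{\tau^l}$; thus $\tau^l\zeta(g')$ is $K_\alpha$-conjugate (indeed $h^{\tau^l}\in K_\alpha$) to $\tau^l\zeta(g)$. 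Injectivity and surjectivity then follow by reversing the construction: given $y\in K_\alpha$, use Lang--Steinberg for the endomorphism $x\mapsto x^{-\alpha^k\tau^l}x$ (whose fixed group $K_{\alpha^k\tau^l}$ is finite) to produce a preimage. I expect the main obstacle to be purely organizational rather than deep: keeping the three twisting operators $\alpha$, $\alpha^k$, $\tau^l$ straight, consistently working inside one fixed overgroup $K\rtimes\langle\alpha,\tau\rangle$, and correctly exploiting the identity $h^{\alpha^k} = h^{\tau^l}$ valid only on the subgroup $K_{\alpha^k\tau^l}$. Once the conventions are pinned down, each verification is a short computation of the type already used in \cite[Proposition 13]{06Zav.t} and \cite[Theorem 2.1]{08GowVin}.
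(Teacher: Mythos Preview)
Your plan matches the paper's proof: the equivalence showing $\zeta(g)\in K_\alpha$, the telescoping computation of $(\beta g)^k$, and the appeal to Lang--Steinberg for surjectivity are exactly the paper's steps. Two concrete points need fixing, however.

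In your well-definedness check, the expansion $\zeta(g')=(zh)^{\tau^l}(zh)^{-\alpha^k}=h^{\tau^l}\zeta(g)h^{-\alpha^k}$ is miscomputed: since $\tau^l$ and $\alpha^k$ are homomorphisms, the correct expansion is $z^{\tau^l}h^{\tau^l}h^{-\alpha^k}z^{-\alpha^k}$, and the identity $h^{\alpha^k}=h^{\tau^l}$ makes the middle pair cancel, giving simply $\zeta(g')=\zeta(g)$ for this choice of $z'$. In particular your claim that $h^{\tau^l}\in K_\alpha$ is false (it would force $h^\alpha=h$) and is not needed. Second, injectivity is not merely ``reversing the construction'': given $y\in K_\alpha$ with $f_1=yf_2y^{-1}$, you must exhibit $x\in K_{\alpha^k\tau^l}$ conjugating $\beta g_1$ to $\beta g_2$. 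The paper sets $x=z_1^{-1}yz_2$ and verifies directly that $x^{\alpha^k\tau^l}=x$ and $x^{-1}\alpha g_1x=\alpha g_2$; this short computation should be spelled out.
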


\begin{proof}[{\bf Proof}]  Observe that $\alpha^k$ and $\alpha^k\tau$ are Frobenius endomorphisms of $K$.

Let $g,z\in K$,  $g=z^{-\alpha}z$ and $h=z^{\tau^l}z^{-\alpha^k}$. Define $f=\tau^lh=z\tau^iz^{-\alpha^k}$. Then
\begin{equation}\label{e:equiv} g^{\alpha^k\tau^l}=g\Leftrightarrow  \tau^l z^{-\alpha^{k+1}}z^{\alpha^k}\tau^l=z^{-\alpha}z\Leftrightarrow z^{\alpha}\tau^l z^{-\alpha^{k+1}}=z\tau^l z^{-\alpha^k}\Leftrightarrow f^{\alpha}=f.\end{equation}
This if $g\in K_{\alpha^k\tau^l}$, then $f\in \tau^l K_\alpha$. Furthermore, $(\beta g)^k=\beta^kg^{\alpha^{k-1}}\dots g=\tau^lz^{-\alpha^k}z=z^{-1}fz$.

Let $g_1=z_1^{-\alpha}z_1\in K_{\alpha^k\tau^l}$ and $g_2=z_2^{-\alpha}z_2\in K_{\alpha^k\tau^l}$.
Suppose that $x^{-1}\alpha g_1x=\alpha g_2$ for some $x\in K_{\alpha^k\tau^l}$. Then writing $y=z_1xz_2^{-1}$, we have  $y^\alpha=y$ and
$$f_1=z_1(\beta g_1)^k z_1^{-1}=z_1x(\beta g_2)^k x^{-1}z_1^{-1}=yf_2y^{-1}.$$
Conversely, if $f_1=yf_2y^{-1}$ with $y\in K_\alpha$, then $x=z_1^{-1}yz_2$ satisfies $x^{\alpha^k\tau^l}=x$ and
$$x^{-1}\alpha g_1x=z_2^{-1}y^{-1}z_1\alpha z_1^{-\alpha}z_1z_1^{-1}yz_2=z_2^{-1}y^{-1}\alpha yz_2=z_2^{-1}\alpha z_2=\alpha g_2.$$
It follows that the map under consideration translate conjugacy classes to conjugacy classes and is injective. Furthermore, the conjugacy class $[f]$ 
does not depend on the choice of $z$ in the equality $g=z^{-\alpha}z$.

Now let $h\in K_\alpha$. By the Lang--Steinberg theorem, there is $z\in K$ such that $h^{-\tau^l}=z^{\alpha^k\tau^l}z^{-1}$, and therefore $\tau^lh=z\tau^lz^{-\alpha^k}$. Then (\ref{e:equiv})
implies that $g=z^{-\alpha}z$ lies in $K_{\alpha^k\tau^l}$, with $[\beta g]$ mapping to $[\tau^lh]$. Thus the map is surjective, and the proof is complete.

\end{proof}

It should be noted that some special cases of Lemma \ref{l:shin} were proved in \cite[Lemma 2.10]{12FulGur} ($l=0$) and \cite[Theorem 2.1]{08GowVin} ($l=1$ and $k=1$).

We use Lemma \ref{l:shin} to establish the following result generalizing \cite[Corollary 14]{06Zav.t}.

\begin{lemma} \label{l:reduce} Let $L=PSL_n(q)$ and $U=PSU_n(q)$, where $n\geqslant 3$ and $q=p^m$. Let $k$ divides $m$,  $q=q_0^k$ and $\beta=\varphi^{m/k}$. Then for any $i$, we have the following:
\begin{enumerate}
\item $\omega(\beta\delta^i(q)L)=k\cdot \omega(\delta^i(q_0)PSL_n(q_0));$
\item if $k$ is even, then  $\omega(\beta\tau\delta^i(q)L)=k\cdot \omega(\delta^i(-q_0)PSU_n(q_0));$
\item if $k$ is odd, then $\omega(\beta\tau\delta^i(-q)U)=k\cdot\omega(\delta^i(-q_0)PSU_n(q_0));$
\item if $k$ is odd, then $\omega(\beta\tau\delta^i(q)L)=k\cdot\omega(\tau\delta^i(q_0)PSL_n(q_0));$
\item $\omega(\beta\delta^i(-q)U)=k\cdot \omega(\tau\delta^i(q_0)PSL_n(q_0)).$
\end{enumerate}
In particular, $\omega(\tau\delta^i(q)L)=\omega(\tau\delta^i(-q)U).$
\end{lemma}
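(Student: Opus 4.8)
The plan is to apply Lemma \ref{l:shin} to the algebraic group $H = GL_n(F)$ with the Frobenius endomorphism $\alpha = \varphi_{q_0}$ (so that $\alpha^k = \varphi_q$) and the involutory automorphism $\tau$. Each of the five identities should then fall out by choosing the parameters $l \in \{0,1\}$ and the source/target cosets appropriately, after passing from $H$ to $SL_n$ and then to the relevant simple quotient, and after accounting for the diagonal twist $\delta^i$ and for the central torsion that distinguishes $SL$ from $PSL$. The key computational input is the last assertion of Lemma \ref{l:shin}: for $g$ in the fixed-point group, $(\beta g)^k = z^{-1}\tau^l \zeta(g) z$, so the order of $\beta g$ is exactly $k$ times the order of $\tau^l \zeta(g)$ whenever $\beta g$ has order divisible by $k$ — and one checks that it always is, since $\beta$ itself has order $k$ modulo inner automorphisms and $(\beta g)^j$ for $j < k$ lies outside $H_\alpha$. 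This is what produces the factor $k$ in every formula.

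First I would fix the dictionary. Under $\alpha = \varphi_{q_0}$ with $l = 0$, the target group $C_H(\alpha) = GL_n(q_0)$ and $\tau^0 C_H(\alpha)$-classes correspond to classes in the coset $\beta \cdot GL_n^{\phantom{}}(q)$-part giving (i): here $\beta$ acts as $\varphi^{m/k}$ on $PSL_n(q)$ and the image coset is $\delta^i(q_0) PSL_n(q_0)$, so the correspondence of Lemma \ref{l:shin} matches $\beta\delta^i(q)L$ with $\delta^i(q_0)PSL_n(q_0)$, and orders multiply by $k$. For (ii)–(v) I would take $l = 1$, so $\alpha^k \tau = \varphi_q \tau$ defines $GU_n(q)$ when $k$ is... — more carefully: $C_H(\alpha^k\tau^l)$ with $l=1$ is $C_H(\varphi_q\tau) = GU_n(q)$ only when we want the unitary group on the \emph{source} side, whereas with $\alpha\tau$ as the ambient twisted endomorphism one lands on $GU_n(q_0)$ on the \emph{target} side. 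The parity of $k$ controls whether $\alpha^k\tau = \varphi_{q_0}^k\tau$ is $GL$-type or $GU$-type relative to $\varphi_{q_0}\tau$, since $(\varphi_{q_0}\tau)^k = \varphi_{q_0}^k\tau^k = \varphi_q\tau^k$ equals $\varphi_q$ for $k$ even and $\varphi_q\tau$ for $k$ odd. That single observation is what splits the statement into the even-$k$ cases (ii), (v) and the odd-$k$ cases (iii), (iv): in (ii) and (v) the coset $\beta\tau$ raised to the $k$-th power kills the graph part and one is comparing against $GU_n(q_0)$, i.e. $PSU_n(q_0)$ with a diagonal twist; in (iii) and (iv) the graph part survives, matching $\tau\delta^i(q_0)PSL_n(q_0)$ or $\delta^i(-q_0)PSU_n(q_0)$ accordingly.

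Next I would descend from $GL$ to the simple groups. This requires checking that the bijection of Lemma \ref{l:shin} restricts correctly to $SL_n$ (which is $\alpha$- and $\tau$-stable and connected) and then descends modulo the center; the order of an element of a coset in $PSL_n$ versus its preimage in $SL_n$ differs only by a factor dividing $d = (n, q-\varepsilon)$, and one must verify that the $\zeta$-map is compatible with this quotient so that the stated equalities of sets $\omega(\cdot)$ — not merely of orders of individual elements — hold. The cleanest route is to note that conjugacy classes in the relevant cosets of the isogeny groups $\Inndiag$ are already handled by Lemma \ref{l:shin} applied to $PGL_n^\varepsilon$, and that the diagonal automorphism $\delta^i$ is exactly the bookkeeping device tracking which $\Inndiag$-coset one sits in; so one applies Lemma \ref{l:shin} with $K = PGL_n$ (still connected) and reads off the $\delta^i$-labels on both sides from the explicit formula for $\zeta$. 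The final sentence, $\omega(\tau\delta^i(q)L) = \omega(\tau\delta^i(-q)U)$, is then immediate: it is the case $k = 1$ of (iv) read against the case $k = 1$ of (v) — both sides equal $\omega(\tau\delta^i(q_0)PSL_n(q_0))$ with $q_0 = q$ — or more directly, it says that the $\varphi_q$-twisted and $\varphi_q\tau$-twisted versions of the coset $\tau \cdot (\text{matrices})$ inside $H$ carry the same class data under the $l=1$ correspondence, since the ambient coset $\tau K_\alpha$ on the target side is the same object regardless of which Frobenius $\alpha^k$ or $\alpha^k\tau$ we started from when $k = 1$.

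The main obstacle I anticipate is the descent to $PSL_n$ and $PSU_n$ — specifically, verifying that the correspondence is compatible with quotienting by the center and with the $d$-fold ambiguity in element orders, so that one obtains genuine set equalities of spectra rather than just a statement about semisimple-part data or about orders up to a divisor of $d$. Handling $\delta^i$ correctly throughout — making sure the exponent $i$ transforms the right way under $\alpha \mapsto \varphi_{q_0}$ (note $\delta^\varphi = \delta^p$ in the presentation, so passing between levels $q$ and $q_0 = q^{1/k}$ rescales the diagonal twist in a way that must be reconciled with the fixed primitive element $\lambda$) — is the delicate point, and I would expect the proof to spend most of its length on exactly this bookkeeping, with the structural input from Lemma \ref{l:shin} being the short conceptual core.
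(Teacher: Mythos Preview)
Your strategy matches the paper's: apply Lemma~\ref{l:shin} to $H=GL_n(F)$ with $\alpha$ ranging over $\varphi_{q_0}$ and $\varphi_{q_0}\tau$ and with $l\in\{0,1\}$, using $(\beta g)^k=z^{-1}\tau^l\zeta(g)z$ to produce the factor $k$. Your case assignment is somewhat scrambled --- the paper does (i)--(iii) with $l=0$ (taking $\alpha=\varphi_{q_0}$ for (i) and $\alpha=\varphi_{q_0}\tau$ for (ii) and (iii)) and handles (iv) and (v) with $l=1$ --- but you reach the governing parity observation $(\varphi_{q_0}\tau)^k=\varphi_q\tau^k$, and your derivation of the final sentence from (v) at $k=1$ is exactly the paper's.

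Where you diverge is in the descent, and here your two proposals both miss the paper's device. The paper does not restrict to $SL_n$ (which cannot see the cosets $\delta^iL$ for $i\not\equiv 0$) nor pass to $PGL_n$; it stays in $GL_n(F)$, works with projective orders $|gZ|$ directly (so $|\beta gZ|=k\,|hZ|$ is immediate from the conjugacy $(\beta g)^k\sim h$), and tracks the $\delta^i$-coset purely via the determinant: $gZ\in\delta^i(q)L$ is a condition on $\det g$ inside $\langle\lambda_q\rangle$, and the entire bookkeeping reduces to computing $\det h$ in terms of $\det g$ from the explicit formula for $\zeta$. For instance, in case (i) one finds $\det h=(\det g)^{(q-1)/(q_0-1)}$, and since $\lambda_q^{(q-1)/(q_0-1)}$ generates $F_{q_0}^\times$ the determinant conditions over $q$ and $q_0$ match; in the $l=1$ cases one instead expresses both $\det g$ and $\det h$ as powers of $\det z$. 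Your proposed route through $K=PGL_n$ would still need precisely this determinant calculation to identify which $\delta^i$-coset $\zeta(g)$ lands in, so it is not a genuine alternative --- the determinant is the missing concrete ingredient that you correctly anticipated would be ``the delicate point''.
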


\begin{proof}[{\bf Proof}]  Denote the center of  $H=GL_n(F)$ by $Z$. In particular, we write $|gZ|$ to denote the projective order of a matrix $g\in H$.

We apply Lemma \ref{l:shin} to $H$ with $\alpha=\varphi_p^{m/k}$ and $l=0$. Observe that $H_{\alpha^k}=GL_n(q)$ and $H_{\alpha}=GL_n(q_0)$. Let $g\in GL_n(q)$ and $h=\zeta(g)$ be the element of $GL_n(q_0)$
defined in Lemma \ref{l:shin}.
Since $(\beta g)^k$ is conjugate to $h$ in $H$, we see that 
$$|\beta gZ|=k\cdot |(\beta gZ)^k|=k\cdot |hZ|,$$ and also 
$$\det h=\det(g^{\beta^{k-1}}\dots g)=(\det g)^{q_0^{k-1}+\dots+1}=(\det g)^{(q-1)/(q_0-1)}.$$
Let $\lambda=\lambda_{q}$ and $\lambda_0=\lambda_{q_0}$. The condition $gZ\in\delta^iL$ is equivalent to $\det g\in\langle \lambda^i,\lambda^n\rangle=\langle \lambda^{(i,n)}\rangle$.
Since $\lambda^{(q-1)/(q_0-1)}$ is a primitive element of $F_{q_0}$, it follows that $\det g\in \langle \lambda^{(i,n)}\rangle$ if and only if
$\det h \in \langle \lambda_0^{(i,n)}\rangle$. Thus (i) holds. Similarly, (ii) and (iii) follow from Lemma \ref{l:shin}  with $\alpha=\varphi_p^{m/k}\tau$, $l=0$.

Let $k$ be odd and $\alpha=\varphi_p^{m/k}\tau$. Then $H_{\alpha^k\tau}=GL_n(q)$ and $H_{\alpha}=GU_n(q_0)$. Applying Lemma \ref{l:shin} with $l=1$, we construct from an element $g\in GL_n(q)$
an element $h\in GU_n(q_0)$ such that $g=z^{-\alpha}z$ and $h=z^{\tau}z^{-\alpha^k}$ for some $z\in H$.
As in the previous case, we deduce that $|\beta\tau gZ|=k\cdot |\tau hZ|$. Furthermore, 
$\det g=(\det z)^{q_0+1}$ and $\det h=(\det z)^{q-1}$. Since $(\det g)^{q-1}=1$, it follows that $(\det z)^{(q_0+1)(q-1)}=1$. Let $\lambda=\lambda_q$, $\lambda_0=\lambda_{q_0}^{q_0-1}$ and
$\mu$ be a primitive $(q_0+1)(q-1)$th root of unity in $F$. Then
$\det g\in \langle \lambda^{(i,n)}\rangle$ if and only if $\det z\in \langle \mu^{(i,n)}\rangle$, which is equivalent to $\det h\in \langle \lambda_0^{(i,n)}\rangle$.
Hence 
 \begin{equation}\label{e:3lu}\omega(\beta\tau\delta^i(q)L)=k\cdot\omega(\tau\delta^i(-q_0)PSU_n(q_0)).\end{equation}

Similarly, taking $\alpha=\varphi_p^{m/k}$ and $l=1$, we prove (v).
Applying (v) with $k=1$ and observing that in this case $\beta$ acts on $U$ in the same way as $\tau$, we have $$\omega(\tau\delta^i(q)L)=\omega(\tau\delta^i(-q)U).$$
Now this equality and (\ref{e:3lu}) imply (iv).

\end{proof}

Note that for unitary groups, (iii) and (v) of Lemma \ref{l:reduce} cover all possibilities for an element $\gamma\in \langle \varphi \rangle$: if $|\gamma|=2k$, then $\langle\gamma \rangle=\langle \varphi^{m/k}\rangle$; 
while if $|\gamma|=k$ with $k$ odd, then $\langle \gamma\rangle=\langle \varphi^{m/k}\tau\rangle$ because $(m+m/k, 2m)=2m/k$.
Thus Lemma \ref{l:reduce} expresses the spectrum of an extension by a field or graph-field automorphism in terms of known spectra and the spectrum of the extension by graph automorphism, which we consider 
in the next section.

\section{Extension by graph automorphism}

This section is largely concerned with matrices, so we need to define some of them. We denote by $E$ the identity matrix whose size is clear from the context, and by $J_k$ the $k\times k$ unipotent Jordan 
block.

Recall that $\langle \overline\delta\rangle\rtimes \langle \overline\tau\rangle$ is a dihedral group of order $2(n,q-\varepsilon)$. It follows that for odd $n$, every $\tau\delta^i$ is conjugate to $\tau$ modulo $L$,
and hence \begin{equation}\label{e:nodd}\omega(\tau L)=\omega(\cup_{i=1}^{d}\tau\delta^iL)=\omega(\tau PGL_n(q)).\end{equation} If $n$ is even, then for any $i$, we have 
\begin{equation}\label{e:neven} \omega(\tau L)=\omega(\tau\delta^{2i}L)\text{\quad and \quad} \omega(\tau\delta L)=\omega(\tau\delta^{2i-1}L).\end{equation}

As we saw in Lemma \ref{l:reduce}, the cosets $\tau\delta^i(q)PSL_n(q)$ and $\tau\delta^i(-q)PSU_n(q)$ have the same orders of elements, so it suffices to describe $\omega(\tau\delta^i(q)PSL_n(q))$.

Since $|g\tau|=2|(g\tau)^2|=2|gg^\tau|$, it follows that the elements of $\omega(\tau PGL_n(q))$ are exactly twice the projective orders of elements of $$\Gamma_n(q)=\{gg^\tau\mid g\in GL_n(q)\}.$$
If $n$ is even, then $\omega(\tau L)$ and $\omega(\tau\delta L)$ are analogously related to the projective orders of elements of 
$$\Gamma_n^\square(q)=\{gg^\tau\mid g\in GL_n(q), \det g\in (F_q^\times)^2\}$$ and
$$\Gamma_n^\boxtimes(q)=\{gg^\tau\mid g\in GL_n(q), \det g\not\in (F_q^\times)^2\},$$
respectively.

A comprehensive treatment of the equation $h=gg^\tau$ for a given matrix $h$ is provided by Fulman and Guralnick in \cite{04FulGur}, and we use the terminology 
and some results of this paper. First of all, it is helpful to note that $h = gg^\tau$ yields \begin{equation}\label{e:class} xhx^{-1}=(xgx^\top)(xgx^\top)^\tau\end{equation} for any $x\in GL_n(q)$,
and hence  $h$ lies in $\Gamma_n(q)$, $\Gamma_n^\square(q)$ or $\Gamma_n^\boxtimes(q)$ if and only if the whole conjugacy class $[h]$ lies in the corresponding set. Thus we
may work not with individual matrices but with conjugacy classes. Recall that the conjugacy classes of $GL_n(q)$ are parametrized by collections of partitions 
$$\{\lambda_\phi\mid \phi\text{ is a monic irreducible polynomial over } F_q\}$$ such that $|\lambda_z|=0$ and $\sum_\phi\deg(\phi)|\lambda_\phi|=n$.
In this parametrization, the collection of partitions $\{\lambda_\phi\}$ corresponds to the class $[h]$ such that the multiplicity of $\phi^k$ as an elementary divisor of $h$ 
is equal to the multiplicity of parts of size $k$ in $\lambda_\phi$. We denote the partition 
corresponding under this parametrization to a class $[h]$ and a polynomial $\phi$ by $\lambda_\phi(h)$.

A criterion for a matrix $h$ to lie in $\Gamma_n(q)$ was obtained by Wall in \cite{63Wall}. In the same paper, Wall described the conjugacy classes of finite symplectic and orthogonal groups over fields of odd characteristic,  and it turns out that the matrices of $\Gamma_n(q)$ are very similar to symplectic and orthogonal ones. 

\begin{lemma}\label{l:resolve}
If $q$ is odd, then  $h\in \Gamma_n(q)$ if and only if $h$ satisfies the following:

\begin{enumerate}
\item $h$ is conjugate to $h^{-1}$;

\item all even parts of $\lambda_{z-1}(h)$ have even multiplicity;

\item all odd parts of $\lambda_{z+1}(h)$ have even multiplicity.

\end{enumerate}

\end{lemma}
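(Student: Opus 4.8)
The plan is to reduce the statement to G.\,E.~Wall's classification of bilinear forms over $F_q$, as the remark preceding the lemma suggests. If $h=gg^\tau=gg^{-\top}$ with $g\in GL_n(q)$, then conjugating by $g^{-1}$ shows that $h$ is conjugate to $BB^{-\top}$ with $B=g$; conversely, since by (\ref{e:class}) the set $\Gamma_n(q)$ is a union of conjugacy classes, $h\in\Gamma_n(q)$ as soon as $h$ is conjugate to $BB^{-\top}$ for some $B\in GL_n(q)$. Now $BB^{-\top}$ is conjugate to the \emph{asymmetry} $B^{-\top}B$ of the nondegenerate bilinear form on $F_q^n$ with Gram matrix $B$, so it suffices to prove that the matrices arising (up to conjugacy) as asymmetries of nondegenerate bilinear forms over $F_q$ are exactly those satisfying (i)--(iii); this is precisely Wall's result in \cite{63Wall}, and one could simply cite it, but I sketch the argument.

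For necessity, condition (i) is immediate: from $h=gg^{-\top}$ we get $g^{-1}hg=g^{-\top}g=(g^{\top}g^{-1})^{\top}=(h^{-1})^{\top}$, so $h$ is conjugate to $h^{-\top}$ and hence to $h^{-1}$. For (ii) and (iii), regard $V=F_q^n$ as an $F_q[h]$-module and decompose it into primary components $V_\phi$ indexed by monic irreducible polynomials $\phi$. Condition (i) forces $\lambda_\phi(h)=\lambda_{\phi^{*}}(h)$, where $\phi^{*}$ denotes the reciprocal polynomial of $\phi$; in particular the components $V_{z-1}$ and $V_{z+1}$ are nondegenerate for the invariant bilinear form with Gram matrix $g$. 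On $V_{z-1}$ the element $h$ acts as a unipotent isometry of the induced nondegenerate symmetric form, and on $V_{z+1}$ it acts as $-1$ times a unipotent isometry of the induced nondegenerate alternating form. Wall's description of the unipotent classes of $O_m(q)$ and $Sp_{2m}(q)$ for odd $q$ --- even-size Jordan blocks occur with even multiplicity in the orthogonal case, odd-size blocks occur with even multiplicity in the symplectic case --- then yields (ii) and (iii) respectively.

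For sufficiency, assume $h$ satisfies (i)--(iii) and build $B\in GL_n(q)$ with $BB^{-\top}$ conjugate to $h$ by assembling blocks over the primary components of $V$. If $\phi\neq\phi^{*}$, then by (i) the components $V_\phi$ and $V_{\phi^{*}}$ carry isomorphic $h$-actions, and a hyperbolic-type pairing between them has asymmetry equal to the $h$-action on $V_\phi\oplus V_{\phi^{*}}$, with no arithmetic constraint. If $\phi=\phi^{*}$ and $\phi\neq z\pm1$, then $\phi$ has even degree and again a suitable nondegenerate form with the prescribed asymmetry exists unconditionally. For $\phi=z-1$ one must realize a unipotent matrix of Jordan type $\lambda_{z-1}(h)$ as an asymmetry, equivalently as a unipotent isometry of \emph{some} nondegenerate symmetric form over $F_q$; by (ii) such a form exists, and its isometry type is irrelevant since only existence is needed, so one may take an orthogonal sum of hyperbolic planes and at most a small anisotropic summand. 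The case $\phi=z+1$ is handled symmetrically using an alternating form and (iii). Gluing the blocks gives the required $B$, whence $h\in\Gamma_n(q)$.

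The substantive part, and the place I expect to spend the most effort, is the sufficiency construction: producing, over the finite field $F_q$ itself, bilinear forms with prescribed asymmetry on each primary component, and verifying that the assembled matrix $g$ has entries in $F_q$ rather than merely in $F$. The parity hypotheses (ii) and (iii) enter at exactly one point, namely in guaranteeing that the $z-1$ and $z+1$ blocks are realizable by symmetric, respectively alternating, forms; everything else is bookkeeping with Wall's structure theory of bilinear forms over finite fields in \cite{63Wall}.
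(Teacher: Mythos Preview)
Your proposal is correct and aligns with the paper's own proof, which consists solely of the citation ``See \cite[Theorem 2.3.1]{63Wall}''; you go further by sketching Wall's argument, but the reduction to asymmetries of nondegenerate bilinear forms and the appeal to Wall's classification is exactly the intended route. One small imprecision in your sketch: on $V_{z-1}$ the restricted form with Gram matrix $g$ is not literally symmetric (only its restriction to the honest $1$-eigenspace is), so the passage from ``$h$ unipotent with $hg^\top=g$'' to ``$h$ is an isometry of a symmetric form'' requires Wall's intermediate step of extracting a genuine symmetric (respectively alternating) form from the bilinear form on that primary component---but this is precisely the content of the cited theorem, so your outline is sound.
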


\begin{proof}[{\bf Proof}] 
See \cite[Theorem 2.3.1]{63Wall}.
\end{proof}

\begin{lemma}\label{l:sym}
Let $h\in GL_n(q)$ and $q$ be odd.  Then $h$ is conjugate to a element of $Sp_n(q)$ if and only if $h$ satisfies the following:

\begin{enumerate}
\item $h$ is conjugate to $h^{-1}$;

\item all odd parts of $\lambda_{z-1}(h)$ and $\lambda_{z+1}(h)$ have even multiplicity.

\end{enumerate}

\end{lemma}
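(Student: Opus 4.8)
The plan is to reinterpret the statement geometrically: $h\in GL_n(q)$ is conjugate to an element of $Sp_n(q)$ if and only if the space $V=F_q^n$ carries a nondegenerate $h$-invariant alternating bilinear form. Regarding a bilinear form $B$ on $V$ as a linear map $V\to V^{*}$ and rewriting the invariance condition $B(hu,hv)=B(u,v)$ as an intertwining relation between $h$ acting on $V$ and $h^{-\top}$ acting on $V^{*}$, one sees that the mere existence of a nondegenerate $h$-invariant form (of any symmetry type) already forces $h$ to be conjugate to $h^{-\top}$, hence to $h^{-1}$, since every matrix over a field is conjugate to its transpose; this yields the necessity of (i). So throughout one may assume (i) and it remains only to decide when an \emph{alternating} invariant form can be found.

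First I would reduce to the unipotent and ``$-1$-unipotent'' parts. Decompose $V$ into its $F_q[h]$-primary components $V_\phi$, with $\phi$ running over the monic irreducible divisors of the characteristic polynomial of $h$, and attach to each $\phi$ its reciprocal $\phi^{*}$, the monic polynomial whose roots are the inverses of those of $\phi$. In any nondegenerate $h$-invariant form, $V_\phi$ is paired with $V_{\phi^{*}}$ and is orthogonal to every other primary component; assumption (i) says precisely that $\lambda_\phi(h)=\lambda_{\phi^{*}}(h)$ for every reciprocal pair, which guarantees that such a form restricts nondegenerately to $V_{z-1}$, to $V_{z+1}$, to each self-dual $V_\phi$ with $\phi\neq z\pm1$ (such $\phi$ being necessarily of even degree), and to each ``hyperbolic'' summand $V_\phi\oplus V_{\phi^{*}}$ with $\phi\neq\phi^{*}$. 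On a hyperbolic summand one can always install an $h$-invariant nondegenerate alternating form, and on a self-dual even-degree $V_\phi$ such a form also exists with no further constraint; thus the whole question comes down to the two pieces $V_{z-1}$, where $h$ is unipotent, and $V_{z+1}$, where $-h$ is unipotent and $h$ preserves an alternating form if and only if $-h$ does.

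On each of these two pieces the governing fact is Wall's description of unipotent classes of symplectic groups \cite{63Wall}: a unipotent element of $GL_m(q)$ is conjugate to an element of a symplectic group on that space exactly when every Jordan block of odd size occurs with even multiplicity (and this already forces $m$ to be even). Applied to $V_{z-1}$ and to $V_{z+1}$, this says precisely that the odd parts of $\lambda_{z-1}(h)$ and of $\lambda_{z+1}(h)$ have even multiplicity, i.e.\ condition (ii). Necessity of (ii) then follows by restricting a given $h\in Sp_n(q)$ to $V_{\pm1}$ (using the orthogonality noted above). Sufficiency follows by assembling the orthogonal sum of the forms produced on the various summands --- hyperbolic forms on the non-self-dual pairs, an invariant alternating form on each self-dual even-degree component, and the Wall forms on $V_{z-1}$ and $V_{z+1}$ --- which is a nondegenerate $h$-invariant alternating form on the whole of $V$.

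I expect the main obstacle to be the bookkeeping on the self-dual, non-unipotent components: checking that in a nondegenerate $h$-invariant form the primary decomposition is genuinely the claimed orthogonal decomposition, and that on a self-dual component of even degree an $h$-invariant \emph{alternating} nondegenerate form is available unconditionally. All of this is implicit in Wall's analysis of the conjugacy classes of the finite classical groups, so in practice I would extract the statement from \cite{63Wall} (as was done for Lemma \ref{l:resolve}) rather than reprove it from scratch; alternatively one can invoke the standard theory of bilinear forms carrying an automorphism. The remaining ingredients --- that every matrix is conjugate to its transpose, and the unipotent-class description above --- are entirely routine.
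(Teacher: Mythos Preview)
Your proposal is correct and matches the paper's approach: the paper simply cites Wall \cite[p.~36, case~(B)($i$)]{63Wall} for this lemma, exactly as you suggest doing in your final paragraph. The outline you give (primary decomposition, hyperbolic pairing of $V_\phi$ with $V_{\phi^*}$, unconditional existence of an invariant alternating form on self-dual components of even degree, and reduction to Wall's unipotent criterion on $V_{z\pm 1}$) is precisely the content behind that citation, so there is nothing to correct.
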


\begin{proof}[{\bf Proof}] 
See \cite[p. 36, case (B)($i$)]{63Wall}.
\end{proof}

\begin{lemma}\label{l:orth}
Let $h\in GL_n(q)$ and $q$ be odd. Then $h$ is conjugate to an element of $GO^\varepsilon_n(q)$ for some $\varepsilon$ if and only if $h$ satisfies the following:

\begin{enumerate}
\item $h$ is conjugate to $h^{-1}$;

\item all even parts of $\lambda_{z-1}(h)$ and $\lambda_{z+1}(h)$ have even multiplicity.

\end{enumerate}

Suppose that $n$ is even and $h$ is a unipotent element satisfying $(ii)$. If $\lambda_{z-1}(h)$ has odd parts, then 
$h$ is conjugate to an element of $GO_n^+(q)$ and also to an element of $GO_n^-(q)$. If there are no odd parts, then $h$ is not conjugate to an 
element of $GO_n^-(q)$.

Suppose that $n$ is even, $|\lambda_{z-1}(h)|=|\lambda_{z+1}(h)|=0$ and $h$ satisfies $(i)$. Then $h$ is conjugate to an element 
of only one of the groups $GO_n^+(q)$ and $GO_n^-(q)$.

\end{lemma}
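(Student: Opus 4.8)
The plan is to prove Lemma \ref{l:orth} by translating the classification of quadratic forms (equivalently, orthogonal group conjugacy classes) over $F_q$ with $q$ odd into the language of elementary divisors, following Wall's analysis in \cite{63Wall}. First I would recall that an element $h\in GL_n(q)$ is conjugate into some $GO_n^\varepsilon(q)$ if and only if $h$ is an isometry of some nondegenerate quadratic (equivalently, symmetric bilinear) form, which by the standard module decomposition reduces to the case of a single $F_q[z]$-primary component. The condition $h\sim h^{-1}$ is equivalent to saying the rational canonical form is ``self-dual'', i.e.\ $\lambda_\phi(h)=\lambda_{\phi^*}(h)$ where $\phi^*$ is the polynomial whose roots are the inverses of those of $\phi$; this is exactly what is needed for the $\phi$-primary component and the $\phi^*$-primary component to pair up into a hyperbolic-type space carrying any $h$-invariant form, so for the pair $\{\phi,\phi^*\}$ with $\phi\ne\phi^*$ there is never an obstruction. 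The subtle constraints live entirely on the $z-1$ and $z+1$ components (the only self-dual linear polynomials, since $q$ is odd so $z\ne z^{-1}$ mod nothing collapses), where condition (ii) — even parts have even multiplicity — is precisely Wall's criterion distinguishing the orthogonal case from the symplectic case (Lemma \ref{l:sym}), the roles of odd and even parts being swapped.

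Next I would address the finer statements about the type $\varepsilon\in\{+,-\}$. For the unipotent case with $n$ even, the $z-1$ component is the whole space and the form it carries on each Jordan block $J_k$ is, up to isometry, an orthogonal sum of pieces whose discriminant/type is forced by the parity of $k$: an even block $J_{2j}$ (appearing with even multiplicity by (ii)) contributes a hyperbolic plane-like piece of fixed type, while an odd block $J_{2j+1}$ carries a form whose type can be chosen to be either square or non-square discriminant by scaling, hence adjusting the global type between $+$ and $-$. So if $\lambda_{z-1}(h)$ has an odd part, both types are realized; if every part is even, the discriminant is pinned down (to the type corresponding to a sum of hyperbolic planes, i.e.\ $+$ when the dimension is right), so $h$ cannot lie in $GO_n^-(q)$. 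I would extract the precise computation of these block discriminants from \cite[\S2.6 and p.~36]{63Wall}, or alternatively cite the corresponding statement in a standard reference; the point is simply that odd Jordan blocks give a ``free'' sign and even ones do not.

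For the last statement — $|\lambda_{z-1}(h)|=|\lambda_{z+1}(h)|=0$, $n$ even, $h\sim h^{-1}$ — the space decomposes as an orthogonal sum over pairs $\{\phi,\phi^*\}$ with $\phi\ne\phi^*$ (if $\phi=\phi^*$ with $\phi$ of degree $\ge 2$ the component is still ``orthogonally rigid'' in type), and each such summand, being built on a $\phi$-module glued to its dual, carries a form of a single determined type — namely a module isometric to a sum of hyperbolic spaces, so of $+$ type. Since there are no $z\mp1$ components to supply a sign flip, the global type is completely determined, hence $h$ is conjugate into exactly one of $GO_n^+(q)$, $GO_n^-(q)$. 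The main obstacle, and the place I would spend the most care, is the explicit determination of the discriminant contributed by each Jordan block $J_k$ for the eigenvalue $\pm1$ and by each paired component $\{\phi,\phi^*\}$: this is the computational heart of Wall's paper, and getting the parity bookkeeping exactly right (in particular that it is the \emph{odd} blocks at $\pm1$ that carry the adjustable sign, consistently with condition (ii) requiring even multiplicities of the even blocks) is what makes the type statements true rather than off by a sign. Everything else is a routine reduction to primary components plus citation of \cite{63Wall}.
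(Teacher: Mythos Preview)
Your proposal is correct and follows the same line as the paper, which simply cites \cite[p.~38, Case (C)($i$,$i'$)]{63Wall} without further argument; you are essentially sketching the content behind that citation (primary decomposition, the self-duality condition $\lambda_\phi=\lambda_{\phi^*}$, and the parity analysis of Jordan blocks at $z\pm 1$ that determines the orthogonal type). One small point to tighten: for self-dual irreducible $\phi$ of degree $\geqslant 2$ the type of the corresponding summand is indeed rigid but need not be $+$, so your final paragraph should not suggest the global type is always $+$---only that it is uniquely determined, which is all the lemma claims.
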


\begin{proof}
See \cite[p. 38, Case (C)($i$,$i'$)]{63Wall}.
\end{proof}

Next we establish a necessary and sufficient condition for $h$ to lie in $\Gamma_n^\square(q)$ or $\Gamma_n^\boxtimes(q)$. Let $h\in GL_n(q)$, $f$ be the characteristic polynomial of $h$, and suppose that 
$f=(z-1)^{n_1}(z+1)^{n_2}f_0$ with $(f_0,z^2-1)=0$. Then $h$ is conjugate to a block diagonal matrix 
\begin{equation}\label{e:normal} \diag(h_1,h_{-1},h_0)\end{equation} with blocks of dimension $n_1$, $n_2$ and $n-n_1-n_2$, respectively, corresponding to this factorization of $f$.
We refer to the matrix in (\ref{e:normal}) as a normal form of $h$.

Let $h\in \Gamma_n(q)$. We may replace $h$ by its normal form $\diag(h_1,h_{-1},h_0)$ with blocks of dimensions $n_1$, $n_2$ and $n-n_1-n_2$, respectively, and denote by $V_1$ the subspace of $F_q^n$ spanned by the first  $n_1$ rows. Suppose that $h=gg^\tau$. Then $h^g=g^\tau g=h^\tau$. It follows that $V_1g$ is $h^\tau$-invariant and the characteristic polynomial of $h^\tau$  on ${V_1g}$ is equal to $(z-1)^{n_1}$, and hence
$V_1g=V_1$. The same is true for other blocks, and so $g$ is also block diagonal with blocks $g_1$, $g_{-1}$ and $g_0$ of the same dimensions as $h_1$, $h_{-1}$ and $h_0$, respectively (see also \cite[Lemma 8.2]{04FulGur}). Since  $h_i=g_ig_i^\tau$ for $i=1,-1,0$ and $\det g=\det g_1\det g_{-1}\det g_0$, it suffices to consider the following special cases: $f=(z-1)^n$, $f=(z+1)^n$, and $(f,z^2-1)=1$.

Recall that for odd $q$, the group $\Omega_{2n}^\varepsilon(q)$ is the kernel of the spinor norm $\theta: SO_{2n}^\varepsilon(q)\rightarrow F_q^\times/(F_q^\times)^2$.
The definition of the spinor norm in \cite[pp. 163--165]{09Tay} implies the following way to calculate it (see also \cite[Proposition 1.6.11]{13BHRD}).

\begin{lemma}\label{l:spinor}
Let $q$ be odd, $n$ be even,  $h\in SO_n^\varepsilon(q)$ and $B$ be the matrix of the invariant symmetric bilinear form of $SO_n^\varepsilon(q)$.
Suppose that $\det(E-h)\neq 0$. Then $\theta(h)\equiv \det((E-h)B)\pmod{(F_q^\times)^2}$.
\end{lemma}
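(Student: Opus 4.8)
The plan is to reduce the computation of the spinor norm to the generic case of a \emph{reflection}, where the formula can be checked directly, and then to use the multiplicativity of both sides under the natural decomposition $V=\ker(E-h)^\perp\oplus\ker(E-h)$. Recall from \cite[pp. 163--165]{09Tay} that the spinor norm $\theta$ is defined via the Clifford algebra: writing $h$ as a product of reflections $h=r_{v_1}r_{v_2}\cdots r_{v_s}$, where $r_v$ denotes the reflection in the nondegenerate vector $v$ with respect to the form $Q$ associated to $B$, one has $\theta(h)\equiv Q(v_1)Q(v_2)\cdots Q(v_s)\pmod{(F_q^\times)^2}$; this product is well defined modulo squares by the Cartan--Dieudonné theorem together with Witt's lemma.

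First I would treat the case $s=1$, i.e. $h=r_v$ a single reflection. Here $\det(E-h)\neq 0$ forces $h$ to have no nonzero fixed vectors other than on the hyperplane $v^\perp$, but in fact for a reflection $E-h$ is never invertible on all of $V$ when $\dim V\geqslant 2$, so this degenerate subcase does not arise in isolation; nevertheless the scalar $Q(v)$ can still be recovered from $B$ as follows. Choosing a basis adapted to the orthogonal decomposition $V=\langle v\rangle\oplus v^\perp$, one computes $(E-h)B$ blockwise: on $v^\perp$ the map $E-h$ is zero, and on $\langle v\rangle$ it acts as multiplication by $2$, while the bilinear form restricted to $\langle v\rangle$ is $2Q(v)$. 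The point I would isolate is the identity $\det((E-h)B)\equiv Q(v)\pmod{(F_q^\times)^2}$ when one restricts to the nondegenerate subspace $W=\langle v\rangle$ on which $E-h$ is invertible, and that the full determinant over $V$ splits as a product over an $h$-invariant orthogonal decomposition $V=U\oplus U^\perp$ with $E-h$ invertible on $U$ and $h$ trivial on $U^\perp$: indeed $\det((E-h)B)=\det((E-h)|_U\, B|_U)\cdot\det(B|_{U^\perp})$, and since $B|_{U^\perp}$ is a symmetric Gram matrix it contributes a square. So both $\theta(h)$ and $\det((E-h)B)$ depend only on the action of $h$ on $U=\ker(E-h)^\perp$, and it suffices to prove the formula under the extra hypothesis that $E-h$ is invertible on all of $V$.

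Under that hypothesis, I would write $h=r_{v_1}\cdots r_{v_s}$ and argue by induction on $s$ using the Wall form. Wall's parametrization (the "Wall form" $\chi_h$ on the image of $E-h$, cf. \cite{63Wall}) gives precisely that when $E-h$ is invertible, $h$ is a product of reflections $r_{v_i}$ in an orthogonal basis $v_1,\dots,v_s$ for $(V,\chi_h)$, and the Gram matrix of $\chi_h$ in any basis is congruent to $(E-h)^{-1}B$ up to a factor that is a square (this is the content of Wall's description of $\det$ and of the spinor norm in terms of $\chi_h$). Hence $\prod_i Q(v_i)=\det(\chi_h\text{-Gram})\equiv \det((E-h)^{-1}B)\equiv \det((E-h)B)\pmod{(F_q^\times)^2}$, using that $\det(E-h)\cdot$ anything is congruent mod squares to its reciprocal times a square, i.e. $\det((E-h)^{-1}B)\equiv\det((E-h))\det(B)\equiv\det((E-h)B)$. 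Comparing with $\theta(h)\equiv\prod_i Q(v_i)$ finishes the argument.

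The main obstacle is making the second paragraph rigorous: one must check carefully that the orthogonal decomposition $V=\ker(E-h)^\perp\oplus\ker(E-h)$ is genuinely orthogonal and $h$-invariant (this uses that $h\in O_n^\varepsilon(q)$ preserves $B$, so $\ker(E-h)=\ker(E-h^{-1})$ and its orthogonal complement equals $\mathrm{im}(E-h)$, both $h$-invariant and nondegenerate in odd characteristic), and that restricting to $U=\mathrm{im}(E-h)$ one still has an element of $SO(U)$ with $E-h$ invertible. Equally delicate is the bookkeeping of square classes: every "congruent up to a square" step must be tracked, in particular the passage from $(E-h)^{-1}B$ to $(E-h)B$, and the fact that changing the orthogonal basis $v_1,\dots,v_s$ of $(U,\chi_h)$ multiplies $\prod Q(v_i)$ only by a square (Witt cancellation). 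Once these points are nailed down, the formula $\theta(h)\equiv\det((E-h)B)\pmod{(F_q^\times)^2}$ follows; alternatively, and perhaps more cleanly for the write-up, one simply cites \cite[Proposition 1.6.11]{13BHRD} where exactly this computation is recorded, and presents the above only as the idea behind it.
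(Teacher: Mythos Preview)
The paper does not prove this lemma at all: it is stated with the remark that it follows from the definition of the spinor norm in \cite[pp.~163--165]{09Tay} and is recorded as \cite[Proposition 1.6.11]{13BHRD}. So your closing alternative, ``simply cite \cite[Proposition 1.6.11]{13BHRD},'' is precisely what the paper does.

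Your sketch via the Wall form is the standard argument behind that citation and is essentially correct, but you have overcomplicated it. The hypothesis is $\det(E-h)\neq 0$, so $\ker(E-h)=0$ from the outset; the orthogonal decomposition $V=\ker(E-h)^\perp\oplus\ker(E-h)$ you spend two paragraphs on is trivial, and the ``main obstacle'' you flag (checking that this decomposition is $h$-invariant and nondegenerate) does not arise. All that remains is the middle paragraph: with $E-h$ invertible, the Wall form $\chi_h$ on $\operatorname{im}(E-h)=V$ has Gram matrix congruent to $(E-h)^{-1}B$, and $\theta(h)$ is the discriminant of $\chi_h$, whence $\theta(h)\equiv\det((E-h)^{-1}B)\equiv\det((E-h)B)\pmod{(F_q^\times)^2}$ since $\det((E-h))^2$ is a square. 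That one paragraph, cleaned up, is the whole proof; the reflection case $s=1$ and the reduction step can be deleted.
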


\begin{lemma}\label{l:sg}
Let $q$ be odd, $n$ be even, $h\in \Gamma_n(q)$ and $f$ be the characteristic polynomial of $h$.
\begin{enumerate}
\item If $f=(z+1)^n$, then $h\not\in \Gamma_n^\boxtimes(q)$.
\item Let $f=(z-1)^n$. If $\lambda_{z-1}(h)$ has no odd parts, then $h\not\in \Gamma_n^\boxtimes(q)$; otherwise, $h\in \Gamma_n^\square(q)\cap\Gamma_n^\boxtimes(q)$.
\item Let $(f, z^2-1)=1$. Then $h$ is conjugate to an element of $SO_n^\varepsilon(q)$ for some unambiguously defined $\varepsilon$, and $h\in \Gamma^\square_n(q)$ if and only if 
$-h$ is conjugate to an element of $\Omega_n^\varepsilon(q)$.
\end{enumerate}

\end{lemma}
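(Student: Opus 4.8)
The plan is to analyze the three cases of Lemma~\ref{l:sg} separately, exploiting the block-decomposition reduction established just before the statement, so that we may assume the characteristic polynomial $f$ of $h$ is a pure power of $z-1$, a pure power of $z+1$, or coprime to $z^2-1$. Recall that $h\in\Gamma_n^\square(q)$ means $h=gg^\tau$ for some $g$ with $\det g\in(F_q^\times)^2$, and by~(\ref{e:class}) this is a property of the conjugacy class $[h]$; moreover $\det(gg^\tau)=\det(g)^2$ is automatically a square in $F_q^\times$, so the real question is whether the square-determinant condition on $g$ can always be met or not. The cleanest way to track this is: if $h=gg^\tau$ and $h=g_1g_1^\tau$ are two solutions, then $g_1=gc$ for some $c$ with $cc^\tau=E$ (i.e. $c^\tau=c^{-1}$, so $c\in\Gamma_n(q)$-stabilizer data), and $\det g_1/\det g=\det c$; so $h\in\Gamma_n^\square(q)\cap\Gamma_n^\boxtimes(q)$ precisely when the set of such $c$ realizes both square and non-square determinants, and $h$ lies in only one of the two sets otherwise. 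So the strategy reduces to computing, for a fixed solution $g$, the image in $F_q^\times/(F_q^\times)^2$ of $\{\det c: c^\tau=c^{-1}, (gc)\text{ still solves }h=(gc)(gc)^\tau\}$ — but since $cc^\tau=E$ already forces $(gc)(gc)^\tau=gg^\tau$, this is just $\{\det c: cc^\tau=E\}$, the image of determinants of the group $\{c: cc^\tau=E\}$, which is exactly the orthogonal-type group attached to the form.

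For part~(i), with $f=(z+1)^n$: here $h$ is $-u$ for a unipotent $u$, and I would use Lemma~\ref{l:resolve}(iii) — all odd parts of $\lambda_{z+1}(h)$ have even multiplicity — to see that $n$ decomposes into even blocks plus pairs of equal odd blocks. On each even block one can solve with a symplectic-type construction (Lemma~\ref{l:sym}); on each paired-odd block one can hyperbolically pair up and choose $g$ freely, and in particular with square determinant. The upshot is that one can always arrange $\det g$ square, so $h\not\in\Gamma_n^\boxtimes(q)$. For part~(ii), with $f=(z-1)^n$: Lemma~\ref{l:resolve}(ii) says all \emph{even} parts of $\lambda_{z-1}(h)$ have even multiplicity. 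If there are no odd parts at all, $h$ is built entirely from paired even blocks, and by Lemma~\ref{l:orth} (the $GO_n^-$ remark) this behaves like the minus-type situation: the relevant $c$'s have determinants confined to squares, so $h\not\in\Gamma_n^\boxtimes(q)$. If there is at least one odd part, a single odd Jordan block of the identity (size~$1$, i.e. a diagonal entry) lets $g$ be a scalar there, whose determinant ranges over all of $F_q^\times$ — hence both a square and a non-square value is attainable, giving $h\in\Gamma_n^\square(q)\cap\Gamma_n^\boxtimes(q)$. This is essentially the content of Lemma~\ref{l:orth}'s ``$GO_n^+$ and $GO_n^-$ both'' clause translated through the $c$-group.

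For part~(iii), with $(f,z^2-1)=1$: now $E-h$ is invertible. The key observation is that $h=gg^\tau$ makes $h$ conjugate to $h^\tau=(h^{-1})$ (Lemma~\ref{l:resolve}(i)), and with no $\pm1$ eigenvalues Lemma~\ref{l:orth} tells us $h$ is conjugate into $GO_n^\varepsilon(q)$ for a uniquely determined sign $\varepsilon$ (the last paragraph of Lemma~\ref{l:orth}). After conjugating, I take $B$ to be the form matrix, so $h\in SO_n^\varepsilon(q)$ (the determinant is forced to be $1$ here since there are no $\pm1$ blocks). The set $\{c:cc^\tau=E\}$, after the same conjugation, becomes (a coset translate of) the orthogonal group $GO_n^\varepsilon(q)$ itself — more precisely, writing $h=gg^\tau$ with $g$ realizing the conjugation into the orthogonal geometry, the ambiguity group is the isometry group of $B$, whose determinants over $F_q^\times/(F_q^\times)^2$ are controlled by the spinor norm via Lemma~\ref{l:spinor}. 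Concretely: $h\in\Gamma_n^\square(q)$ iff some solution $g$ has $\det g$ square iff $\det g$ for the orthogonal-geometry solution lands in $(F_q^\times)^2$, and by Lemma~\ref{l:spinor} this is governed by $\det((E-h)B)\bmod(F_q^\times)^2$, i.e. by whether $-h$ (equivalently the element $h$ viewed against $E-h$) has spinor norm $1$, that is, lies in $\Omega_n^\varepsilon(q)$. I expect the main obstacle to be exactly this last translation — carefully bookkeeping the determinant of the base change $g$ that carries the abstract solution to one adapted to the orthogonal form, and showing the parity of $\det g$ coincides with the spinor norm of $-h$ as computed by Lemma~\ref{l:spinor}; the sign $-h$ versus $h$ and the fact that $\det(-h)=(-1)^n=1$ (as $n$ is even) are where one must be most careful.
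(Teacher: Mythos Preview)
Your framework contains a genuine computational error that undermines all three parts. You claim that if $h=gg^\tau=g_1g_1^\tau$ with $g_1=gc$, then the condition on $c$ is $cc^\tau=E$, and that conversely $cc^\tau=E$ forces $(gc)(gc)^\tau=gg^\tau$. But $\tau$ is an automorphism, so $(gc)^\tau=g^\tau c^\tau$ and hence $(gc)(gc)^\tau=gc\,g^\tau c^\tau$; equating this to $gg^\tau$ gives $cg^\tau c^\tau=g^\tau$, which is \emph{not} $cc^\tau=E$ but rather a $g$-dependent isometry-type condition. Your version would make the ambiguity set equal to all symmetric matrices, whose determinants hit every class in $F_q^\times/(F_q^\times)^2$ (take $c=\diag(\lambda,1,\dots,1)$), forcing $h\in\Gamma_n^\square(q)\cap\Gamma_n^\boxtimes(q)$ for every $h$ --- contradicting the very lemma you are trying to prove.

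Even setting this aside, part~(i) has a logical gap: showing that \emph{some} $g$ has square determinant only yields $h\in\Gamma_n^\square(q)$, not $h\notin\Gamma_n^\boxtimes(q)$; for the latter you must show \emph{every} solution $g$ has square determinant. The paper's argument for (i) and (ii) is quite different from yours: it takes the Jordan decomposition of $g\tau$ inside $GL_{2n}(q)$, writing $g\tau=ug_1\tau=g_1\tau u$ with $u$ unipotent. Then $(g_1\tau)^2=\pm E$ forces $g_1$ to be skew-symmetric (case~(i)) or symmetric (case~(ii)), and $\det g=\det g_1$ since $\det u=1$. In case~(i) the Pfaffian makes $\det g_1$ a square for every solution. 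In case~(ii), $h=u^2$ and $u$ preserves the form with matrix $g_1$, so Lemma~\ref{l:orth} controls which types of form (hence which determinants of $g_1$) can occur. For part~(iii) the paper avoids the ambiguity-group route entirely: it computes directly
\[
\theta(-h)\equiv\theta(-E)\theta(h)\equiv\det B\cdot\det((E-h)B)\equiv\det(E-h)=\det(g^\top-g)\det g^\tau\equiv\det g,
\]
using that $g^\top-g$ is skew-symmetric. This one-line identity is the missing ``translation'' you anticipated; your isometry-group heuristic is pointing in a related direction (the true condition $cg^\tau c^\tau=g^\tau$ \emph{is} an isometry condition for the form $g^{-\top}$), but you have not established it and cannot reach it from the incorrect $cc^\tau=E$ premise.
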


\begin{proof}[{\bf Proof}]  Let $h=gg^\tau$. Possibilities for $g$ in the first two cases are found in \cite[Section 8]{04FulGur}, and we use this result for calculations. 
For brevity, we write $x \equiv y$ to denote that $x \equiv y\pmod{(F_q^\times)^2}$.

(i) The group $GL_n(q)\rtimes \langle\tau\rangle$ can be embedded into $GL_{2n}(q)$, and we consider the Jordan decomposition of $g\tau$ in the letter group. 
So $g\tau = ug_1\tau= g_1\tau u$, where $u$ is unipotent and $(g_1\tau)^2 =-E$; i.\,e. $g_1$ is a skew-symmetric matrix.
Since the determinant of a skew-symmetric matrix is a square and $\det u=1$, we see that $\det g$ is also a square.

(ii) Similarly, $g\tau = ug_1\tau= g_1\tau u$, where $u$ is unipotent and $(g_1\tau)^2 = E$; i.\,e.  $g_1$ is a symmetric matrix.
Furthemore, $h=u^2$ and the equality $ug_1\tau= g_1\tau u$ is equivalent to the condition that $u$ preserves the bilinear form defining by $g_1$. By Lemma  \ref{l:orth},
if $\lambda_{z-1}(u)=\lambda_{z-1}(h)$ has odd parts, then $u$ is conjugate both to an element of $SO_n^+(q)$ and to an element of $SO_n^-(q)$, and so 
we can choose $g_1$ with any determinant, square or non-square. If $\lambda(u)$ has no odd parts, then $u$ is conjugate to an element of $SO_n^+(q)$ only, and since 
the multiplicities of even parts are even, $n$ is divisible by 4. In this case $\det g_1\equiv \det E$.

(iii) The fact that $h$ is conjugate to an element of $SO_n^\varepsilon(q)$, where $\varepsilon$ is unambiguously defined, follows from Lemmas \ref{l:resolve} and \ref{l:orth}. 
By Lemma \ref{l:spinor}, we have $\theta(h)\equiv \det(E-h)\det B$, where $B$ is the matrix of the symmetric bilinear form preserved by $h$.
Observing that $\theta(-E)\equiv\det B$, we have $$\theta(-h)\equiv\det(E-h)=\det(E-gg^\tau)=\det(g^\top-g)\det g^\tau\equiv \det g,$$
where the final equivalence holds because the determinant of the skew-symmetric matrix $g^\top-g$ is a square. The proof is complete.

\end{proof}

We are ready to find $\omega(\tau PGL_n(q))$ and $\omega(\tau PSL_n(q))$.

\begin{lemma}\label{l:graph_odd} Let $q$ and $n\geqslant 3$ be odd. Then $$\omega(\tau PSL_n(q))=\omega(\tau PGL_n(q))=2\cdot \omega(Sp_{n-1}(q)).$$
\end{lemma}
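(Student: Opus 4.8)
The plan is to prove the two equalities separately. First I would establish $\omega(\tau PSL_n(q))=\omega(\tau PGL_n(q))$, which is immediate from \eqref{e:nodd}, since $n$ is odd: every $\tau\delta^i$ is conjugate to $\tau$ modulo $L$, so the coset $\tau L$ accounts for all of $\tau\, PGL_n(q)$. So it remains to prove $\omega(\tau PGL_n(q))=2\cdot\omega(Sp_{n-1}(q))$. Recall the reduction already recorded in the text: $|g\tau|=2|gg^\tau|$ (projective orders), so $\omega(\tau PGL_n(q))=2\cdot\{|hZ|\mid h\in\Gamma_n(q)\}$, where $\Gamma_n(q)=\{gg^\tau\mid g\in GL_n(q)\}$. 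Thus I must show that the set of projective orders of matrices in $\Gamma_n(q)$ coincides with the set of projective orders of elements of $Sp_{n-1}(q)$, i.e. with $\omega(Sp_{n-1}(q))$ (note $Sp_{n-1}(q)$ has trivial center intersection issues are vacuous here, so projective order equals order; but more carefully, I should phrase things in terms of $\omega(PSp_{n-1}(q))$ if needed and reconcile — actually $Sp_{n-1}(q)$'s center is $\{\pm E\}$, so I will keep track of the factor and argue via $\omega(Sp_{n-1}(q))$ directly since $|gZ|$ for $g\in GL_n(q)$ with $Z=Z(GL_n(q))$ need not match $|g|$ in $Sp_{n-1}(q)$; the cleanest route is to compare the two sets of projective orders directly).

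The heart of the argument is a comparison of conjugacy-class invariants via Wall's classification. Both $\Gamma_n(q)$ (by Lemma \ref{l:resolve}) and $Sp_{n-1}(q)$ (by Lemma \ref{l:sym}) are described by partition data attached to the polynomials $z-1$, $z+1$, and the remaining irreducible factors. The key observation is that $n$ is odd: if $h\in\Gamma_n(q)$, then $\lambda_{z-1}(h)$ and $\lambda_{z+1}(h)$ cannot both have all parts of even multiplicity with even total, and precisely one of $n_1=|\lambda_{z-1}(h)|$, $n_2=|\lambda_{z+1}(h)|$ is odd (the other being even or zero, since $n_1+n_2$ is odd after removing the even-degree part contributed by the remaining factors). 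I would show: given $h\in\Gamma_n(q)$, one can produce $h'\in Sp_{n-1}(q)$ (by deleting one suitable odd Jordan block at eigenvalue $1$ or $-1$, which exists by condition (ii) or (iii) of Lemma \ref{l:resolve}) with the same projective order, because a single $J_k$ with $k$ odd at eigenvalue $\pm1$ has unipotent part of the same order as the rest of the data already forces, and conversely, given $h'\in Sp_{n-1}(q)$ satisfying Lemma \ref{l:sym}, one reinstates a block $J_1$ (or adjusts a block $J_k\to J_{k+1}$ at eigenvalue $1$) to land in $\Gamma_n(q)$ without changing the order modulo center. The precise bookkeeping — which block to add or remove, and checking that the $p$-part of the order is unchanged (this uses that the largest Jordan block size, hence the power of $p$ dividing the order, is governed by condition (iii)/(iv) items of Lemma \ref{l:spec_an}-style reasoning and is preserved) — is the main obstacle.

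Concretely, the forward inclusion $\{|hZ|\mid h\in\Gamma_n(q)\}\subseteq\omega(Sp_{n-1}(q))$: take $h=\diag(h_1,h_{-1},h_0)$ in normal form; by Lemma \ref{l:resolve}, $\lambda_{z+1}(h_{-1})$ has all odd parts of even multiplicity, so $h_{-1}$ has a block structure where I can drop a $J_1$ at $-1$ if present, or $\lambda_{z-1}(h_1)$ has all even parts of even multiplicity so a $J_k$ with $k$ odd can be shaved; since $n$ is odd, at least one such block exists, and removing it yields an $(n-1)$-dimensional $h'$ satisfying the conditions of Lemma \ref{l:sym}, with $|h'|=|h|$ as matrices (the removed block contributes nothing new to the order since its unipotent part has size $\le$ the surviving data, which I verify using that odd-multiplicity odd parts come in matched pairs). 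Then $|h'Z'|$ in $Sp_{n-1}(q)$ equals $|hZ|$ up to the factor $(2,\ldots)$ which I track. For the reverse inclusion, given $h'\in Sp_{n-1}(q)$: it satisfies Lemma \ref{l:sym}(i),(ii); I adjoin a single $J_1$ at eigenvalue $1$ (or merge it with an existing $z-1$ block to fix parities) to obtain an $n$-dimensional $h$ meeting all three conditions of Lemma \ref{l:resolve}, again with the same order modulo centers. The one subtlety I expect to wrestle with is the interaction with the center $Z$ of $GL_n(q)$ versus the center of $Sp_{n-1}(q)$: since scalars in $GL_n(q)$ have determinant a nonzero power, $hZ=h'Z$ only after checking that multiplying by a scalar cannot reduce the order — here $n$ odd helps again, because the scalar matrices meeting $\Gamma_n(q)$ are limited. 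I would resolve this by noting $\omega(\tau PGL_n(q))$ consists of genuinely twice the projective orders and matching against the explicit description of $\omega(Sp_{n-1}(q))$ from Lemma \ref{l:spec_cn_bn}, comparing the two lists of divisor-generating numbers term by term as a final sanity check.
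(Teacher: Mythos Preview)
Your reduction via \eqref{e:nodd} and the identity $|g\tau|=2|gg^\tau|$ is exactly right, and the plan to compare $\Gamma_n(q)$ with $Sp_{n-1}(q)$ through Wall's conditions (Lemmas \ref{l:resolve} and \ref{l:sym}) is the correct framework. But there is a genuine gap in the forward inclusion, and you are missing the one observation that dissolves your ``center'' worry.

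The key fact you do not state is that for odd $n$, every $h\in\Gamma_n(q)$ has $1$ as an eigenvalue. Indeed, $h\sim h^{-1}$ forces the part of $h$ away from $\pm1$ to be even-dimensional, and Lemma \ref{l:resolve}(iii) forces $|\lambda_{z+1}(h)|$ to be even (odd parts occur with even multiplicity, and even parts contribute even sums); hence $|\lambda_{z-1}(h)|$ is odd and in particular positive. Once $1$ is an eigenvalue, $h^k\in Z$ implies $h^k=E$, so the projective order of $h$ equals its ordinary order. This eliminates the centre bookkeeping you flag as a subtlety.

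Your ``shave one block'' strategy for the forward inclusion does not work as written. You claim that at eigenvalue $1$ ``odd-multiplicity odd parts come in matched pairs'', but Lemma \ref{l:resolve}(ii) constrains the \emph{even} parts of $\lambda_{z-1}(h)$, not the odd ones: a single odd-sized Jordan block $J_k$ at eigenvalue $1$ with $k$ maximal is perfectly allowed, and deleting it strictly lowers the $p$-part of $|h|$. The paper avoids this by not deleting a block at all. Writing $h=\diag(h_1,h_{-1},h_0)$ with $h_1$ unipotent of odd size $k$, Lemma \ref{l:orth} gives $h_1\in SO_k(q)$ up to conjugacy, while Lemma \ref{l:sym} gives $\diag(h_{-1},h_0)\in Sp_{n-k}(q)$; then one uses $\omega_{\tilde p}(SO_k(q))=\omega_{\tilde p}(Sp_{k-1}(q))$ (immediate from comparing maximal Jordan block sizes, since $p$ is odd) and $Sp_{k-1}(q)\times Sp_{n-k}(q)\hookrightarrow Sp_{n-1}(q)$. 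For the reverse inclusion, adjoining a single $1$-block handles the semisimple case cleanly, but for $p$-singular orders the paper builds explicit witnesses such as $\diag(J_{p^{t-1}+2},h_s)$ or $\diag(1,-J_{n-1})$ rather than a merge argument; your description there is too vague to be checked.
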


\begin{proof}[{\bf Proof}]  The first equality was established in (\ref{e:nodd}). To prove the second one, we need to show that the set of the projective orders of 
elements of  $\Gamma_n(q)$ is equal to $\omega(Sp_{n-1}(q))$.

Let $h\in \Gamma_n(q)$. Since $n$ is odd, Lemma \ref{l:resolve} implies that 1 is an eigenvalue of $h$. It follows that the projective order of $h$ is equal to the ordinary order. 

We show first that $|h|\in\omega(Sp_{n-1}(q))$. Let $\diag(h_1,h_{-1},h_0)$ be a normal form of $h$ as in (\ref{e:normal}) and denote the dimension of $h_1$ by $k$. Then $k$ is odd.
By Lemmas \ref{l:resolve} and \ref{l:sym}, the unipotent matrix $h_1$ is conjugate to an element of $SO_k(q)$ and the matrix $\diag(h_{-1},h_0)$ is conjugate to an element of $Sp_{n-k}(q)$.
If $k=1$, then there is nothing to prove. If $k>1$, then $|h_1|\in\omega_p(SO_k(q))=\omega_p(Sp_{k-1}(q))$, and hence $|h|\in\omega(Sp_k(q)\times Sp_{n-k}(q))\subseteq\omega(Sp_{n-1}(q))$.

It follows from Lemmas \ref{l:resolve} and \ref{l:sym} that a semisimple matrix lies in $\Gamma_n(q)$ if and only if it is conjugate to a matrix of the form $\diag(1,h')$ with $h'\in Sp_{n-1}(q)$, and thus $\omega_{p'}(\Gamma_n(q))=\omega_{p'}(Sp_{n-1}(q))$.

Let $a\in \omega(Sp_{n-1}(q))$ and $|a|_p=p^t>1$. If $a=2p^t$ and $n-1=p^{t-1}+1$ (that is, the condition from (v) of Lemma \ref{l:spec_cn_bn} holds), then we define $h=\diag(1, -J_{n-1})$. If $a=p^t$ or $n-1>p^{t-1}+1$, then 
there is a semisimple matrix $h_s\in Sp_{l}(q)$, where $n-1=p^{t-1}+1+l$, such that $a=p^t|h_s|$, and we define $h=\diag(J_{p^{t-1}+2}, h_s)$. It is easy to see that $h\in \Gamma_n(q)$ and $|h|=a$. The proof is complete.
\end{proof}

\begin{lemma}\label{l:graph_even}
Let $q$ be odd and $n\geqslant 4$ be even. Then $$\omega(\tau PGL_n(q))=2\cdot\omega(PSp_n(q)),$$ $$\omega_{p'}(\tau PSL_n(q))=2\cdot\omega_{p'}(P\Omega_n^+(q))\cup 2\cdot\omega_{p'}(P\Omega_n^-(q)).$$
If $n>4$, then $$\omega_{\tilde p}(\tau PSL_n(q))=2\cdot\omega_{\tilde p}(\Omega_{n+1}(q)).$$
The set $\omega_{\tilde p}(\tau PSL_4(q))$ consists of all multiples of $p$ dividing $p(q\pm 1)$ if $p>3$, and it consists of these multiples together with $18$ if $p=3$.
\end{lemma}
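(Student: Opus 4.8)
\textbf{Proof proposal for Lemma \ref{l:graph_even}.}

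The plan is to run the same kind of argument as in Lemma \ref{l:graph_odd}, but now tracking the determinant condition so as to separate $\Gamma_n^\square(q)$ from $\Gamma_n^\boxtimes(q)$, and to handle the three factorization cases $f=(z-1)^n$, $f=(z+1)^n$, $(f,z^2-1)=1$ supplied by the normal form \eqref{e:normal} and Lemma \ref{l:sg}. For $\omega(\tau PGL_n(q))$: recall from \eqref{e:nodd}-style reasoning that this equals twice the set of projective orders of elements of $\Gamma_n(q)$. By Lemmas \ref{l:resolve} and \ref{l:sym} a matrix of $\Gamma_n(q)$ is, up to conjugacy, a product of blocks each conjugate into a symplectic group (the $(z-1)$ and $(z+1)$ blocks have odd parts of even multiplicity after Lemma \ref{l:resolve}, hence fall under Lemma \ref{l:sym}; for the semisimple-away-from-$\pm1$ part the conjugacy into $Sp$ is automatic). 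Conversely any element of $PSp_n(q)$ lifts to one of $Sp_n(q)$, whose elementary divisor data automatically satisfies the conditions of Lemma \ref{l:resolve} (all even parts at $z\mp1$ having even multiplicity is exactly the defining symplectic condition at those polynomials, and the $z^2-1$-free part causes no obstruction), so one gets the reverse containment; passing to projective orders gives the factor $PSp_n(q)$. The routine point to check is that projective order versus order is handled by the center $\{\pm E\}\cap Sp_n(q)$, which is exactly the center of $Sp_n(q)$, so the projective orders of $\Gamma_n(q)$ match $\omega(PSp_n(q))$ rather than $\omega(Sp_n(q))$.

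For $\omega_{p'}(\tau PSL_n(q))$: by \eqref{e:neven} this is $2$ times the union of the projective orders of semisimple elements in $\Gamma_n^\square(q)$ and in $\Gamma_n^\boxtimes(q)$. A semisimple $h$ with $(f,z^2-1)=1$ is, by Lemma \ref{l:sg}(iii), conjugate into $SO_n^\varepsilon(q)$ for a well-defined $\varepsilon$, and lies in $\Gamma_n^\square(q)$ iff $-h$ is conjugate into $\Omega_n^\varepsilon(q)$; a semisimple $h$ with a nontrivial $(z\pm1)$-part is $\diag(\pm E_{n_1\mp1}\text{-free data})$ and Lemma \ref{l:sg}(i),(ii) tells us the $(z+1)$-block never lands in $\Gamma_n^\boxtimes$ and a semisimple $(z-1)$-block of even size (no odd parts) also never does. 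Matching the spinor-norm bookkeeping of Lemma \ref{l:sg} against the description of $\omega_{p'}(P\Omega_n^\varepsilon(q))$ in Lemma \ref{l:spec_dn}, one checks that the projective orders realized in $\Gamma_n^\square(q)$ (resp. $\Gamma_n^\boxtimes(q)$) are precisely $\omega_{p'}(P\Omega_n^+(q))$ (resp.\ $\omega_{p'}(P\Omega_n^-(q))$) — the point being that $-h\in\Omega_n^\varepsilon(q)$ corresponds exactly to the splitting of eigenvalue tori recorded in the partition formulas of Lemma \ref{l:spec_dn}, while an element with an eigenvalue $\pm1$ always falls into the ``$\square$'' case and into $P\Omega_n^+$.

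For $\omega_{\tilde p}(\tau PSL_n(q))$ with $n>4$: an element with $|h|_p=p^t>1$ has a nontrivial unipotent block $\diag(J_{p^{t-1}+2},\dots)$ or a $-J$-type block, contributing a $(z-1)$- or $(z+1)$-part with odd parts of odd multiplicity, so by Lemma \ref{l:sg}(i),(ii) such $h$ always lies in $\Gamma_n^\square(q)$, i.e.\ the $\boxtimes$-coset contributes nothing new in the $\tilde p$ range. Hence $\omega_{\tilde p}(\tau PSL_n(q)) = 2\cdot\omega_{\tilde p}$ of the set of projective orders of $\Gamma_n(q)$ carrying a $p$-part, and the analysis of Lemma \ref{l:graph_odd} adapts: a unipotent Jordan block of size $p^{t-1}+2$ sits inside $GO_{p^{t-1}+2}^\pm(q)$ (Lemma \ref{l:orth}, odd parts present), and $GO_{p^{t-1}+2}^\pm(q)$ contains $\Omega_{p^{t-1}+1}(q)$-related unipotent data of the same order, so the $p$-part orders match those coming from $\Omega_{n+1}(q)$ via Lemma \ref{l:spec_cn_bn}. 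Here the even-dimensional normalization forces one to use $\Omega_{n+1}(q)$ (odd dimension $n+1$) rather than $Sp_n(q)$, because of the $+1$ in the constraints $p^{t-1}+1+2n_i$ of Lemma \ref{l:spec_cn_bn}; I would verify the numerology $p^{t-1}+1+2n_1=2n$ versus $p^{t-1}+1+2n_1=n+1$ translates the realizable mixed unipotent-semisimple orders correctly, and that the center/spinor constraints do not lose any order in the $\tilde p$ part (they do not, since a unipotent block of odd size kills the spinor obstruction).

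For the exceptional $n=4$ case: here $n+1=5$ and $\Omega_5(q)\simeq PSp_4(q)$, but the small-dimension coincidences make the generic matching break, so I would argue directly. The unipotent/mixed elements of $\Gamma_4(q)$ with $p$-part have characteristic polynomial $(z-1)^4$, $(z-1)^2(z\mp1)$-patterns, or $(z-1)^{p^{t-1}+2}$ forcing $p^{t-1}+2\leqslant 4$, i.e.\ $p^{t-1}\leqslant 2$: for $p>3$ only $t=1$, $p^{t-1}=1$, giving a $J_3$ block and a $2\times 2$ semisimple block of order dividing $q\pm1$, so the orders are the multiples of $p$ dividing $p(q\pm1)$. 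For $p=3$ one additionally gets $t=2$, $p^{t-1}=3$, block $J_5$ — impossible in dimension $4$ — but a single $J_4$ block: $\diag(J_4)$ has order $9$ (since $3^{\lceil\log_3 4\rceil}=9$) and, being in $\Gamma_4(q)$, is realized; I would check it contributes precisely the extra value $18$ (twice $9$) after the factor $2$, and that no other value arises. The main obstacle in the whole lemma is this bookkeeping in the $p'$-part: correctly pairing the spinor-norm computation of Lemma \ref{l:sg}(iii) with the exact denominators $(4,q^n-\varepsilon)$ and the factor-$e$ rule in Lemma \ref{l:spec_dn} so that the $\square/\boxtimes$ split matches $P\Omega_n^+$ versus $P\Omega_n^-$ on the nose; everything else is a straightforward adaptation of Lemma \ref{l:graph_odd}.
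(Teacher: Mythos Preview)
There are two genuine gaps.

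\textbf{The coset $\tau PSL_n(q)$ corresponds to $\Gamma_n^\square(q)$ alone.} Your reading of \eqref{e:neven} is off: that equation separates $\tau L$ from $\tau\delta L$, and the lemma concerns only $\tau L$. For $gZ\in L$ with $n$ even, $\det g$ is an $n$th power and hence a square, so $gg^\tau\in\Gamma_n^\square(q)$; the set $\Gamma_n^\boxtimes(q)$ never enters. Consequently your proposed dichotomy ``$\Gamma_n^\square\leftrightarrow P\Omega_n^+$, $\Gamma_n^\boxtimes\leftrightarrow P\Omega_n^-$'' cannot be what produces the union in the statement. What actually happens, via Lemma~\ref{l:sg}(iii), is that for semisimple $h\in\Gamma_n^\square(q)$ with $(f,z^2-1)=1$ the sign $\varepsilon$ in $SO_n^\varepsilon(q)$ is determined by $h$ and can be either $+$ or $-$, and the spinor condition $-h\in\Omega_n^\varepsilon(q)$ cuts the projective order down from $SO_n^\varepsilon$ to $P\Omega_n^\varepsilon$; this is why both $P\Omega_n^+$ and $P\Omega_n^-$ appear on the right-hand side for the single coset $\tau L$.

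\textbf{The $(z-1)$-block is orthogonal, not symplectic.} Lemma~\ref{l:resolve} says the \emph{even} parts of $\lambda_{z-1}(h)$ have even multiplicity, which is the orthogonal condition of Lemma~\ref{l:orth}, not the symplectic condition of Lemma~\ref{l:sym}. So your claim that every block of $h\in\Gamma_n(q)$ is symplectic fails for the unipotent part, and the converse ``$Sp_n(q)\subseteq\Gamma_n(q)$'' is false already for $J_2\in Sp_2(q)$ (a single even part of multiplicity one). The paper handles this by noting that the unipotent block $h_1$ of even size $2k$ lies in $SO_{2k}^+(q)$, whence $|h_1|\in\omega_{\tilde p}(SO_{2k}^+(q))=\omega_{\tilde p}(Sp_{2k-2}(q))$, and then assembling with the genuinely symplectic block $\diag(h_{-1},h_0)$. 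For the reverse containment the paper does \emph{not} embed $Sp_n(q)$ into $\Gamma_n(q)$; it constructs representatives of the form $\diag(-J_{p^{t-1}+1},-h_s)$, placing the Jordan block at eigenvalue $-1$ (where an even part of odd multiplicity is allowed). This also fixes your $n=4$ analysis: $J_4\notin\Gamma_4(q)$ because $\lambda_{z-1}=(4)$ violates Lemma~\ref{l:resolve}(ii), whereas $-J_4\in\Gamma_4(q)$ and has projective order $9$ when $p=3$, giving the extra $18$.
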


\begin{proof}[{\bf Proof}]   It suffices to find the sets of the projective orders of elements of $\Gamma_n(q)$ and $\Gamma_n^\square(q)$. For brevity, we denote these sets by 
$\omega(P\Gamma_n(q))$ and $\omega(P\Gamma_n^\square(q))$ respectively. Also we denote by $Z$ the center of $GL_n(q)$.

Let $h\in \Gamma_n(q)$ and $\diag(h_1,h_{-1},h_0)$ be a normal form of $h$. Since $n$ is even, it follows from Lemma \ref{l:resolve} that the dimension of $h_1$ is even too. We denote this dimension by $2k$ and  define $h_2=\diag(h_{-1},h_0)$. Then $h_1$ is conjugate to an element of $SO_{2k}^+(q)$ and $h_2$ is conjugate to an element of $Sp_{n-2k}(q)$.

Let $k>0$. Then $|h_1|\in\omega_p(SO_{2k}^+(q))=\omega_p(Sp_{2k-2}(q))$, and hence $$|h|\in \omega(Sp_{2k-2}(q)\times Sp_{n-2k}(q))\subseteq\omega(Sp_{n-2}(q)).$$
Using Lemmas \ref{l:spec_cn_bn} and \ref{l:spec_dn}, it is easy to check that $$\omega_{p'}(Sp_{n-2}(q))\subseteq \omega_{p'}(P\Omega_n^+(q))\cup \omega_{p'}(P\Omega_n^-(q))\subseteq \omega (PSp_n(q)),$$
$$\omega_{\tilde p}(Sp_{n-2}(q))\subseteq \omega_{\tilde p}(\Omega_{n+1}(q))\subseteq \omega_{\tilde p} (PSp_n(q)).$$
Thus the projective order of $h$ lies in the required sets.

Let $k=0$. Then $h=h_2$ and $|hZ|\in \omega(PSp_n(q))$. It follows that  $$(P\Gamma_n(q))\subseteq \omega(PSp_n(q)).$$

Suppose, in addition, that $h\in \Gamma_n^\square(q)$. Denote the dimension of $h_{-1}$ by $2l$. By Lemma \ref{l:sg}, we deduce that $h_{-1}\not\in \Gamma_{2l}^\boxtimes(q)$, and so
$-h_0$ is conjugate to an element of $\Omega_{n-2l}^\varepsilon(q)$ for some $\varepsilon$. Observe that $|hZ|=|-hZ|$.

Assume that $|-h_{-1}|>1$. Then $|-hZ|=|-h|=|-h_{-1}|\cdot|-h_0|$. Since $|-h_{-1}|\in\omega_p(Sp_{2l}(q))=\omega_p(\Omega_{2l+1}(q))$, we have $$|hZ|\in\omega_{\tilde p}(\Omega_{2l+1}(q)\times \Omega_{n-2l}^\varepsilon(q))\subseteq \omega_{\tilde p}(\Omega_{n+1}(q)).$$
Furthermore, if $n=4$ then either $|hZ|=9$ and $p=3$, or $|hZ|$ lies in $p\cdot\Omega_{2}^\varepsilon(q)$ and, therefore, divides $p(q-\varepsilon)/2$.

If $|-h_{-1}|=1$, then either $l=0$ and $h=h_0$, or $l>0$ and $|-hZ|=|-h_0|$. In either case, $|-hZ|\in\omega(P\Omega_n^\varepsilon(q))\subseteq\omega(\Omega_{n+1}(q))$. In particular, if $n=4$ and the order $|-hZ|$ is a multiple of 
$p$, then it divides $p(q\pm 1)/2$.

Now we prove the reverse containments. If  $h$ is semisimple, then $h\in \Gamma_n(q)$ if and only if $h$ is conjugate to an element of $Sp_n(q)$,
and hence $$\omega_{p'}(PSp_n(q))\subseteq \omega(P\Gamma_n(q)).$$

Let $a\in\omega(PSp_n(q))$ and $|a|_p=p^t>1$. Lemma \ref{l:spec_cn_bn} implies that there is a semisimple matrix $h_s\in Sp_{l}(q)$, where $n=p^{t-1}+1+l$, such that $a=p^t|h_s|$, and we define $h=\diag(-J_{p^{t-1}+1}, -h_s)$. It is easy to see that $h\in \Gamma_n(q)$ and $|hZ|=|-hZ|=|-h|=a$. Thus $$\omega_{\tilde p}(PSp_n(q)\subseteq \omega(P\Gamma_n(q)),$$ and the assertion concerning $\tau PGL_n(q)$ follows.

Let $h\in\Omega_n^\varepsilon(q)$ be semisimple and $\diag(h_1,h_{-1},h_0)$ be a normal form of $h$, with $h_1$, $h_{-1}$ of dimension $k$ and $l$ respectively. If $k>0$, then $h\in \Gamma_n^\square(q)$ by Lemma \ref{l:sg}(ii).
If $k=0$, then applying (i) or (iii) of Lemma \ref{l:sg} according as $l>0$ or $l=0$,  we deduce that $-h\in\Gamma_n^\square(q)$. In any case,  $|hZ|\in\omega(P\Gamma^\square_n(q))$.

Suppose that $n>4$ and $a\in\omega_{\tilde p}(\Omega_{n+1}(q))$, or $n=4$ and $a=pc$ with $c$ dividing $(q\pm 1)/2$, or $n=4$, $p=3$ and $a=9$. Let $|a|_p=p^t$. By Lemmas \ref{l:spec_cn_bn} and \ref{l:spec_dn}, there are  $\varepsilon\in\{+,-\}$ and a semisimple matrix $h_s\in \Omega^\varepsilon_{l}(q)$, where $n=p^{t-1}+1+l$, such that $a=p^t|h_s|$.
Defining $h=\diag(-J_{p^{t-1}+1}, -h_s)$, we see that $h\in\Gamma_n^\square(q)$ and $|hZ|=|-hZ|=|-h|=a$. The proof is complete.

\end{proof}

In contrast to the sets of the projective orders of elements of $\Gamma_n(q)$ and $\Gamma_n^\square(q)$, the corresponding set for $\Gamma_n^\boxtimes(q)$, where $n$ is even, is not in general 
closed under taking divisors. So we do not give a explicit description of the set $\omega(\tau\delta PSL_n(q))$ for even $n$. However, we derive some properties of this set.

\begin{lemma}\label{l:graph_diag} Let $q$ be odd and $n\geqslant 4$ be even. Then $\omega(\tau\delta PSL_n(q))\not\subseteq\omega(PSL_n(q))$.
If $k=2^s$, where $s\geqslant 0$, and $q=q_0^k$, then $k\cdot \omega(\tau\delta(q_0)PSL_n(q_0))=\not \subseteq \omega (PSU_n(q))$. In particular,
$\omega(\tau\delta(-q) PSU_n(q))\not\subseteq\omega(PSL_n(q))$.

\end{lemma}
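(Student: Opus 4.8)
The plan is to describe $\omega(\tau\delta PSL_n(q))$ through the set of projective orders of the matrices of $\Gamma_n^\boxtimes(q)$, written $\omega(P\Gamma_n^\boxtimes(q))$ as in the proof of Lemma~\ref{l:graph_even}; recall that $\omega(\tau\delta PSL_n(q))=2\cdot\omega(P\Gamma_n^\boxtimes(q))$, so it suffices to exhibit $h\in\Gamma_n^\boxtimes(q)$ with $2|hZ|\notin\omega(PSL_n(q))$, where $Z$ is the centre of $GL_n(q)$. Two routes to membership in $\Gamma_n^\boxtimes(q)$ will be combined. (a)~Since $\Gamma_n(q)=\Gamma_n^\square(q)\cup\Gamma_n^\boxtimes(q)$ and, by Lemma~\ref{l:graph_even}, $\omega(P\Gamma_n(q))=\omega(PSp_n(q))$ and $\omega_{p'}(P\Gamma_n^\square(q))=\omega_{p'}(P\Omega_n^+(q))\cup\omega_{p'}(P\Omega_n^-(q))$, any $b$ in $\omega_{p'}(PSp_n(q))$ lying outside both $\omega_{p'}(P\Omega_n^\pm(q))$ belongs to $\omega(P\Gamma_n^\boxtimes(q))$. (b)~By Lemma~\ref{l:sg}(iii) and Lemma~\ref{l:spinor}, a semisimple $h$ whose characteristic polynomial is prime to $z^2-1$ and which is conjugate to $h^{-1}$ lies in $\Gamma_n(q)$, is conjugate to an element of a uniquely determined $SO_n^\varepsilon(q)$, and belongs to $\Gamma_n^\boxtimes(q)$ exactly when $\det((E+h)B)\notin(F_q^\times)^2$, $B$ being the Gram matrix of the form.

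Assume first $(n)_2\leqslant(q-1)_2$; I claim $q^{n/2}+1\in\omega(\tau\delta PSL_n(q))\setminus\omega(PSL_n(q))$. The non-membership is Lemma~\ref{l:2adj}. For the membership, let $t$ be a generator of a cyclic maximal torus of $SO_n^-(q)$ of order $q^{n/2}+1$, with eigenvalue $\zeta$ of that order and irreducible characteristic polynomial of degree $n$. For $j\in\{1,2\}$ the element $t^j$ is semisimple, conjugate to its inverse in $GO_n^-(q)$, has characteristic polynomial prime to $z^2-1$, and $2|t^jZ|=q^{n/2}+1$: indeed $\langle t^j\rangle\cap Z=\langle -E\rangle$ if $j=1$ while $\langle t^j\rangle\cap Z=1$ if $j=2$ (the latter because in the cases where $j=2$ is taken one has $q\equiv1\pmod 4$, so $(q^{n/2}+1)/2$ is odd, by Lemma~\ref{l:gcd}). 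Now $\det(E+t^j)=N_{F_{q^n}/F_q}(1+\zeta^j)$; writing $\zeta^j=\beta^{\,q^{n/2}-1}$ by Hilbert~90 gives $1+\zeta^j=(\beta^{q^{n/2}}+\beta)/\beta$ with $\beta^{q^{n/2}}+\beta\in F_{q^{n/2}}^\times\subseteq(F_{q^n}^\times)^2$, whence $N_{F_{q^n}/F_q}(1+\zeta^j)\in(F_q^\times)^2$ if and only if $\zeta^j$ lies in the index-two subgroup of the norm-one torus, i.e. if and only if $j$ is even; and $\det B\in(F_q^\times)^2$ if and only if $n\equiv2\pmod 4$ and $q\equiv3\pmod 4$. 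Taking $j=1$ in this exceptional case and $j=2$ otherwise, Lemma~\ref{l:spinor} yields $\det((E+t^j)B)\notin(F_q^\times)^2$, so $t^j\in\Gamma_n^\boxtimes(q)$ by~(b). (When $n\equiv2\pmod 4$ and $q\equiv3\pmod 4$ one may instead invoke~(a): then $4\mid q^{n/2}+1$, and $(q^{n/2}+1)/2\in\omega_{p'}(PSp_n(q))$ is missed by $P\Omega_n^\pm(q)$ by Lemmas~\ref{l:spec_dn} and~\ref{l:zsi}.)

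The hard case is $(n)_2>(q-1)_2$; then $4\mid n$, and $q^{n/2}\pm1$ as well as every relevant $2p^t$ already lie in $\omega(PSL_n(q))$ by Lemma~\ref{l:spec_an}, so a new witness is needed. Here I would take $h=\diag(h_1,h_2)$ with $h_1$ a plus-type and $h_2$ a minus-type Singer cycle on complementary subspaces of dimensions $2a$ and $2b$, $a+b=n/2$, the pair $a,b$ being chosen from the $2$-adic valuations of $n$ and $q-1$ so that, by multiplicativity of the spinor norm, exactly one of the two anisotropic blocks has non-square spinor norm (hence $h\in\Gamma_n^\boxtimes(q)$) and $2|hZ|$ is essentially $2[q^a-1,q^b+1]$; a computation with Lemmas~\ref{l:spec_an}, \ref{l:gcd} and~\ref{l:r-part} should then give $2|hZ|\notin\omega(PSL_n(q))$, the decisive input being $(n)_{2'}>1$ or $(n,q-1)_{2'}>1$ (cf.\ Theorem~\ref{t:graph}(v)). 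I expect this to be the main obstacle: the required non-divisibilities are sensitive to the precise $2$-parts of $n$ and $q-1$, so a further split is unavoidable — in particular, when $n$ is a power of $2$ I would instead use a mixed element $\diag(J_{p^{t-1}+2},J_1,s)$, which lies in $\Gamma_n^\boxtimes(q)$ by Lemma~\ref{l:sg}(ii) (its odd Jordan block being decisive), with $s$ a small anisotropic torus element chosen so that $2|hZ|\notin\omega(PSL_n(q))$.

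The third assertion is now immediate: by the last equality of Lemma~\ref{l:reduce}, $\omega(\tau\delta(-q)PSU_n(q))=\omega(\tau\delta(q)PSL_n(q))=\omega(\tau\delta PSL_n(q))$, which is not contained in $\omega(PSL_n(q))$ by what precedes. For the second assertion, Lemma~\ref{l:reduce}(v) gives $k\cdot\omega(\tau\delta(q_0)PSL_n(q_0))=\omega(\varphi^{m/k}\delta(-q)PSU_n(q))$, so it is enough to show that the coset $\varphi^{m/k}\delta\,PSU_n(q)$ contains an element of order outside $\omega(PSU_n(q))$; applying Lemmas~\ref{l:shin} and~\ref{l:reduce} once more one reduces, exactly as above, to comparing $k\cdot\omega(P\Gamma_n^\boxtimes(q_0))$ with $\omega(PSU_n(q_0^k))$, and the witnesses of the two previous paragraphs (multiplied by $k$) do the job once one checks, with Zsigmondy's Lemma~\ref{l:zsi} and Lemma~\ref{l:spec_an} for unitary groups, that they avoid $\omega(PSU_n(q))$ — replacing $k(q_0^{n/2}+1)$ by a mixed-torus witness in the cases where $q_0^{n/2}+1$ itself lies in $\omega(PSU_n(q))$.
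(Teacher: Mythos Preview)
Your Case~1 argument (the case $(n)_2\leqslant(q-1)_2$) is correct, though considerably more elaborate than the paper's: the paper simply notes that $SO_n^-(q)\setminus\Omega_n^-(q)$ contains an element $h$ of projective order $(q^{n/2}+1)/2$ and invokes Lemma~\ref{l:sg}(iii) to conclude $-h\in\Gamma_n^\boxtimes(q)$, so $q^{n/2}+1\in\omega(\tau\delta L)$. Your Hilbert--90 computation is a nice alternative but unnecessary.

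The genuine gap is your Case~2, where $(n)_2>(q-1)_2$. Your proposed semisimple witness $2[q^a-1,q^b+1]$ can only be shown to lie outside $\omega(PSL_n(q))$ under the extra hypotheses of Theorem~\ref{t:graph}(v), namely $(n)_{2'}>3$ and $(n,q-1)_{2'}>1$; for instance it fails whenever $n$ is a $2$-power, or more generally whenever $(n,q-1)$ is a $2$-power. Your fallback $\diag(J_{p^{t-1}+2},J_1,s)$ does lie in $\Gamma_n^\boxtimes(q)$ by Lemma~\ref{l:sg}(ii), but it also lies in $\Gamma_n^\square(q)$, so $2|hZ|\in\omega(\tau PSL_n(q))$; hence by Lemma~\ref{l:graph_even} this order divides a number in $\omega(\Omega_{n+1}(q))$ and one cannot expect it to escape $\omega(PSL_n(q))$ in general.

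The paper's remedy is to drop your restriction of route~(a) to $p'$-elements. Since $\omega(\tau PGL_n(q))=\omega(\tau L)\cup\omega(\tau\delta L)$, \emph{any} element of $\omega(\tau PGL_n(q))\setminus\omega(\tau L)$ already lies in $\omega(\tau\delta L)$. Now $(n)_2>(q-1)_2\geqslant2$ forces $4\mid n$, so $l=(n-2)/2$ is odd, and by Lemma~\ref{l:spec_cn_bn} together with Zsigmondy one has $p(q^l\pm1)\in\omega(PSp_n(q))\setminus\omega(\Omega_{n+1}(q))$; Lemma~\ref{l:graph_even} then gives $2p(q^l\pm1)\in\omega(\tau\delta L)$. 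Since $(q^l-\varepsilon)/d$ is odd in this situation, $2p(q^l+\varepsilon)$ does not divide $p(q^{n-2}-1)/d$ and hence is the required witness outside $\omega(PSL_n^\varepsilon(q))$; the same element (multiplied by $k$) handles the unitary second assertion. This $p$-singular witness is the idea your sketch is missing.
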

\begin{proof}[{\bf Proof}]  Recall that $\omega(\tau\delta PSL_n(q))$ consists of the projective orders of matrices of $\Gamma_n^\boxtimes(q)$ multiplied by $2$.

There is an element $h\in SO_n^\epsilon(q)\setminus\Omega_n^\epsilon(q)$ whose projective order is equal to $(q^{n/2}-\epsilon)/2$. By Lemma \ref{l:sg}, it follows that  $-h\in \Gamma_n^\boxtimes(q)$, and so 
 $q^{n/2}\pm 1\in \omega (\tau\delta PSL_n(q))$.

Assume that $n>4$ and $l=(n-2)/2$ is odd. Then Lemma \ref{l:spec_cn_bn} together with the existence of primitive divisors $r_{l}(\pm q)$ implies that $p(q^{l}\pm 1)\in \omega(PSp_n(q))\setminus\omega(\Omega_{n+1}(q))$.
Applying Lemma \ref{l:graph_even}, we see that \begin{equation}\label{e:l} 2p(q^{l}\pm 1)\in \omega(\tau PGL_n(q))\setminus\omega(\tau PSL_n(q))\subseteq  \omega(\tau\delta PSL_n(q)).\end{equation}

Assume now that $n=4$ and $q\equiv \epsilon \pmod 4$. The numbers $p(q+1)$ and $p(q-\epsilon)$ lie in $\omega(PSp_4(q))$ and do not divide $p(q\pm 1)/2$ (if $q-1$ divides $(q+1)/2$, then $q=3$),
and hence \begin{equation}\label{e:4}2p(q+1), 2p(q-\epsilon)\in\omega(\tau\delta PSL_4(q)).\end{equation}

We prove the first assertion of the lemma and the second assertion for $k=1$ together. Suppose that $\omega(\tau\delta PSL_n(q))\subseteq\omega(PSL_n^\varepsilon(q))$ and as usual define $d=(n,q-\varepsilon)$.
Then $q^{n/2}+\varepsilon^{n/2}\in \omega(PSL_n^\varepsilon(q))$. By Lemma \ref{l:2adj}, this is equivalent to $(n)_2>(q-\varepsilon)_2$.
It follows that $l=(n-2)/2$ is odd and $(d)_2=(q-\varepsilon)_2$, and so $(q^{l}-\varepsilon)/d$ is odd. Then $2(q^{l}+\varepsilon)$ does not divide $(q^{n-2}-1)/d$ and, therefore, 
$2p(q^{l}+\varepsilon)\not\in\omega(PSL_n^\varepsilon(q))$. If $n>4$, this contradicts (\ref{e:l}). If $n=4$, then $q\equiv -\varepsilon\pmod 4$ and (\ref{e:4}) implies that
$2p(q+\varepsilon)\in \omega(\tau\delta PSL_4(q))$ yielding a contradiction.

Suppose now that $k>1$ and $k\cdot \omega(\tau\delta(q_0)PSL_n(q_0))\subseteq \omega (PSU_n(q))$. Observe that $(q+1)_2=2$and choose $\epsilon$ so that $q_0\equiv \epsilon \pmod 4$.
Let $n/2$ be odd. By assumption, $a=k(q_0^{n/2}-\epsilon)\in\omega(PSU_n(q))$. Since $a$ is a multiple of $r_{n/2}(\epsilon q_0)$ and $r_{n/2}(\epsilon q_0)\in R_{n/2}(q)=R_n(-q)$,
it follows that $a$ divides $c=(q^n-1)/((q+1)(n,q+1))$. This is a contradiction because  $$(a)_2=(q^{n/2}-1)_2>(c)_2=(q^n-1)_2/4.$$ Thus $l=(n-2)/2$ is odd and by (\ref{e:l}) and (\ref{e:4}), we have  $2p(q_0^{l}-\epsilon)\in \omega(\tau\delta PSL_n(q_0))$.
Then $a=2kp(q_0^{l}-\epsilon)\in \omega(PSU_n(q))$, and hence $a$ divides $p(q^{n-2}-1)/(n,q+1)$. This contradicts $(a)_2=(q^{n-2}-1)_2$.

The final assertion follows form the second one and Lemma \ref{l:reduce}.

\end{proof}

We close this section with a proof of Theorem \ref{t:graph} and one of its corollaries.

\begin{proof}[{\bf Proof of Theorem \ref{t:graph}}]

Recall that $L=PSL_n^\varepsilon(q)$, $d=(n,q-\varepsilon)$ and consider the difference $\omega(\tau L)\setminus\omega(L)$.
We analyze separately two cases according as  $n$ is odd or even.

Let $n$ be odd. By Lemma \ref{l:graph_odd}, the difference under consideration is equal to $$2\cdot\omega(Sp_{n-1}(q))\setminus\omega(L).$$
We consider the numbers from \ref{l:spec_cn_bn} defining $\omega(Sp_{n-1}(q))$ in turn.

Let $a=2p^t$, where $p^{t-1}+1=n-1$. Any element of $\omega(L)$ that is a multiple of $p^t$
divides $p^t(q-\varepsilon)/d$. Since $d$ is odd, $2a$ divides $p^t(q-\varepsilon)/d$ if and only if $4$ divides $q-\varepsilon$.
Thus $4p^t\in \omega(\tau L)\setminus\omega(L)$ if and only if $n=p^{t-1}+2$ for some $t\geqslant 1$ and $q\equiv -\varepsilon\pmod{4}$.

Let $a=p^t[q^{n_1}\pm 1, q^{n_2}\pm1, \dots, q^{n_s}\pm1]$, where $p^{t-1}+1+ 2n_1 + 2n_2 + \dots + 2n_s = n-1$,
or $a=[q^{n_1}\pm 1, q^{n_2}\pm 1, \dots, q^{n_s}\pm 1]$, where $s\geqslant 2$ and $2(n_1 + n_2 + \dots + n_s) = n-1$.
Then $2a$ divides $p^t[q^{2n_1}-1, q^{2n_2}-1, \dots, q^{2n_s}-1,q-\varepsilon]$ or $[q^{2n_1}-1, q^{2n_2}-1, \dots, q^{2n_s}-1,q-\varepsilon]$ respectively.

The remaining possibility is $a=q^l\pm 1$, where $l=(n-1)/2$. If $a=q^{l}+\varepsilon^l$, then $2a$ divides $$\frac{q^{n-1}-1}{d}=(q^l+\varepsilon^l)\frac{q^l-\varepsilon^l}{d}$$
since $d$ is odd. Let $a=q^l-\varepsilon^l$. Assume that $l$ is not a $2$-power. Then $(l)_2\leqslant l/3$, and hence $l+2(l)_2<n$.
It follows that $L$ has an element of order $$c=[q^l-\varepsilon^l,q^{2(l)_2}-1,q-\varepsilon].$$ Since $2(a)_2=2(q^l-\varepsilon^l)_2\leqslant (q^{2l}-1)_2=(q^{2(l)_2}-1)_2$,
we have that $2a$ divides $c$. Finally, assume that $l$ is a 2-power. Then the 2-exponent of $L$ is equal to $(q^{2l}-1)_2$ and any element of $\omega(L)$ that a multiple of $(q^{2l}-1)_2$
divides $(q^{2l}-1)/d$. Clearly, $2a$ divides $(q^{2l}-1)/d$ if and only if $d$ divides $q^l+\varepsilon^l$, which is equivalent to $d=1$.
Thus $2(q^{l}-\varepsilon^l)\in\omega(\tau L)\setminus\omega(L)$ if and only if $n-1=2^t$ and $d\neq 1$.

Let $n$ be even. By Lemma \ref{l:graph_even}, the set $\omega_{\tilde p}(\tau L)$ is equal to $2\cdot\omega_{\tilde p}(\Omega_{n+1}(q))$ if $n>4$, it consists of divisors of $p(q\pm 1)$ if $n=4$ and $p>3$, and 
it consists of divisors of $p(q\pm 1)$ together with $18$ if $n=4$ and $p=3$. We can consider the numbers from Lemma \ref{l:spec_cn_bn} defining $\omega_{\tilde p}(\Omega_{n+1}(q)$ for $n\geqslant 6$ and 
the numbers defining $\omega_{\tilde p}(\tau L)$ for $n=4$ together.

If $a=p^t$ and $n=p^{t-1}+1$, then $2a\not\in\omega(L)$.

Let $a=p^t[q^{n_1}\pm 1, q^{n_2}\pm 1, \dots, q^{n_s}\pm 1]$, where $s\geqslant 2$ and $p^{t-1}+1+ 2n_1 + 2n_2 + \dots + 2n_s = n$. Then $2a$ divides $p^t[q^{2n_1}-1, q^{2n_2}-1, \dots, q^{2n_s}-1]\in\omega(L)$.

Let $a=p^t(q^{n_1}-\epsilon)/2$, where $p^{t-1}+1+ 2n_1 = n$. If $\epsilon=\varepsilon^{n_1}$, then $2a$ divides $p^t(q^{n_1}-\varepsilon^{n_1})\in\omega(L)$, while if $\epsilon=-\varepsilon^{n_1}$, then 
$2a$ divides $$p^t\frac{q^{2n_1}-1}{d}=p^t(q^{n_1}+\varepsilon^{n_1})\frac{q^{n_1}-\varepsilon^{n_1}}{d}.$$

Thus  $$\omega_{\tilde p}(\tau L)\subseteq \omega(L),$$
whenever $n\neq p^{t-1}+1$.

To handle $\omega_{p'}(\tau L)=2\cdot\omega(P\Omega_{2n}^+(q))\cup 2\cdot \omega(P\Omega_{2n}^-(q))$, we consider the numbers from Lemma \ref{l:spec_dn} that define $\omega_{p'}(P\Omega^\pm_{2n}(q))$.

Let $a=[q^{n_1}\pm 1, q^{n_2}\pm 1, \dots, q^{n_s}\pm 1]$, where $s\geqslant 3$ and $2(n_1 + n_2 + \dots + n_s) = n$. Then $2a$ divides $[q^{2n_1}-1, q^{2n_2}-1, \dots, q^{2n_s}-1]\in \omega(L)$.

Let $a=(q^{n/2}-\epsilon)/(4,q^{n/2}-\epsilon)$. If $\epsilon=\varepsilon^{n/2}$, then $2a$ divides $q^{n/2}-\varepsilon^{n/2}\in\omega(L)$. If $\epsilon=-\varepsilon^{n/2}$ and $(4,q^{n/2}+\varepsilon^{n/2})=4$, then $2a$ divides $$\frac{q^n-1}{(q-\varepsilon)d}=\frac{(q^{n/2}+\varepsilon^{n/2})(q^{n/2}-\varepsilon^{n/2})}{(q-\varepsilon)d}$$
because $d/2$ divides $(n/2,q-\varepsilon)=((q^{n/2}-\varepsilon^{n/2})/(q-\varepsilon),q-\varepsilon)$. If $\epsilon=-\varepsilon^{n/2}$ and $(4,q^{n/2}+\varepsilon^{n/2})=2$, then Lemma \ref{l:2adj}
implies that $2a=q^{n/2}+\varepsilon^{n/2}$ does not lie in $\omega(L)$ if and only if $(n)_2\leqslant (q-\varepsilon)_2$.
Assuming the last inequality, the condition $(4,q^{n/2}+\varepsilon^{n/2})=2$ is equivalent to $q\equiv \varepsilon\pmod 4$.

Finally, let $a=[q^{n_1}-\epsilon_1,q^{n_2}-\epsilon_2]/e$, where $2(n_1+n_2)=n$, $e=2$ if $(q^{n_1}-\epsilon_1)_2=(q^{n_2}-\epsilon_2)_2$ and $e=1$ if $(q^{n_1}-\epsilon_1)_2\neq(q^{n_2}-\epsilon_2)_2$. 
Define $n_0=n/(2n_1,2n_2)$. We may assume that   $(q^{n_1}-\epsilon_1)_2\geqslant (q^{n_2}-\epsilon_2)_2$.

If $\epsilon_2=\varepsilon^{n_2}$, then $2a$ divides $[q^{2n_1}-1,q^{n_2}-\varepsilon^{n_2}]\in\omega(L)$ since $$2(a)_2\leqslant 2(q^{n_1}-\epsilon_1)_2\leqslant (q^{2n_1}-1)_2.$$
Similarly, if $\epsilon_1=\varepsilon^{n_1}$ and $(q^{n_1}-\epsilon_1)_2=(q^{n_2}-\epsilon_2)_2$, then $2a$ divides $[q^{2n_2}-1,q^{n_1}-\varepsilon^{n_1}]$.

Let $\epsilon_2=-\varepsilon^{n_2}$ and $\epsilon_1=-\varepsilon^{n_1}$. If $e=2$, then $2a$ divides $[q^{2n_1}-1,q^{2n_2}-1]/(q-\varepsilon)$. If $e=1$, then $n_1$ and $n_2$ have opposite parity, 
therefore,  $n/2$ is odd. It follows that $n_0$ is odd too, and $2a$ divides $[q^{2n_1}-1,q^{2n_2}-1]/(n_0,q-\varepsilon)$.

Let $\epsilon_2=-\varepsilon^{n_2}$ and $\epsilon_1=\varepsilon^{n_1}$. We may assume that $(q^{n_1}-\varepsilon^{n_1})_2>(q^{n_2}+\varepsilon^{n_2})_2$.  If $n_1\neq (n_1)_2$, then $n_1+2(n_1)_2+2n_2<n$ and $2a$ divides $[q^{n_1}-\varepsilon^{n_1}, q^{2n_2}-1, q^{2(n_1)_2}-1]$.
If $2(q^{n_1}-\varepsilon^{n_1})_2\leqslant (q^{2n_2}-1)_2$, then $2a$ divides $[q^{2n_2}-1,q^{n_1}-\varepsilon^{n_1}]$. If $q^{n_2}+\varepsilon^{n_2}$ divides $q^{2n_1}-1$, then $2a$ divides $q^{2n_1}-1$ too. Finally, if $(n_0,q-\varepsilon)=1$, then $2a$ divides $[q^{2n_1}-1,q^{2n_2}-1]$.

Thus it remains to consider the case when $n_1=(n_1)_2$, $2(q^{n_1}-\varepsilon^{n_1})_2>(q^{2n_2}-1)_2$, $q^{n_2}+\varepsilon^{n_2}$ does not divide $q^{2n_1}-1$ and
$(n_0,q-\varepsilon)\neq 1$. The two first conditions yield $n_1>(n_2)_2$. In particular, $(n/2)_2=(n_2)_2$ and $n_0=n/(2n_2)_2=(n)_{2'}$.
Then the third condition is equivalent to $n_2$ not being a 2-power. Observe that 
$$(n)_{2'}=n/(2n_2)_2=(n_1+n_2)/(n_2)_2=n_1/(n_2)_2+(n_2)_{2'}.$$
It follows that $(n)_{2'}$ is a sum of a non-identity $2$-power  and an odd number greater than 1 and also $((n)_{2'}, q-\varepsilon)\neq 1$. 
In particular, $(n)_{2'}>3$ and $(n,q-\varepsilon)_{2'}\neq 1$.

Conversely, suppose that $n=2^t\cdot l$, where $t\geqslant 1$, $l\geqslant 5$ is odd and $(n,q-\varepsilon)$ is divisible by an odd prime $r$. Writing $n_1=(n)_2=2^t$, $n_2=n/2-n_1=2^{t-1}(l-2)$ and $a=[q^{n_1}-1,q^{n_2}+\varepsilon^{n_2}]$, we see that $$(q^{n_1}-1)_2=2^{t-1}(q^2-1)_2>(n_2)_2(q+\varepsilon)_2\geqslant (q^{n_2}+\varepsilon^{n_2})_2$$ and hence $2a\in\omega(\tau L)$. 
Assume that $2a\in\omega(L)$ and let $c=[q^{l_1}-\varepsilon^{l_1},\dots,q^{l_s}-\varepsilon^{l_s}]/f$, where $l_1+\dots+l_s=n$, be a number from Lemma \ref{l:spec_an} that $2a$ divides.  Since 
$2(a)_2=2(q^{n_1}-1)_2=(q^{2n_1}-1)_2$, some of the numbers $l_1, \dots, l_s$ is a multiple of $2n_1$. Also $r_{2n_2}(\varepsilon q)$ divides $a$, and hence some of them is a multiple of $2n_2$. Observing that
$[2n_1,2n_2]=2^{t+1}(l-2)>2^tl=n$, we deduce that those are different numbers, which yields $s=2$, $l_1=2n_1$ and $l_2=2n_2$. Then $f=(n/(2n_1,2n_2),q-\varepsilon)$, and so $(f)_r>1$. Since $(r,q+\varepsilon)=(r,n_1)=(r,n_2)=1$, it follows that $$(c)_r=\frac{(q-\varepsilon)_r}{(f)_r}<(q-\varepsilon)_r=(a)_r.$$
This is a contradiction, therefore, $2a\not\in\omega(L)$, and the proof is complete.

\end{proof}

Theorem~\ref{t:graph} has an interesting corollary: as the next lemma shows, if $\tau$ is admissible as an automorphism of $PSL^\varepsilon_n(q)$, then it is admissible as an automorphism of $PSL^\varepsilon_n(q^{1/k})$ 
for every odd $k$.

\begin{lemma}\label{l:graph_field} Let $k$ be odd and  $q=q_0^k$. If $\omega(\tau L)\subseteq \omega(L)$, then $$\omega(\tau PSL^\varepsilon_n(q_0))\subseteq\omega(PSL^\varepsilon_n(q_0)).$$
In particular, if $\varepsilon=+$, $\tau$ and $\beta=\varphi^{m/k}$ are admissible for $L$, then $\beta\tau$ is also admissible.
\end{lemma}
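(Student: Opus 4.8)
The plan is to derive the first assertion from Theorem~\ref{t:graph} and the second from the first together with Lemma~\ref{l:reduce}. Put $G_0=L\rtimes\langle\tau\rangle$; then $\omega(\tau L)\subseteq\omega(L)$ is equivalent to $\omega(G_0)=\omega(L)$, which by Theorem~\ref{t:graph} means that none of the conditions (i)--(v) stated there holds for $(n,\varepsilon,q)$, so it suffices to show that if none of (i)--(v) holds for $(n,\varepsilon,q)$ then none holds for $(n,\varepsilon,q_0)$, where $q=q_0^k$ with $k$ odd. The arithmetic facts I would use are: $p$ and $n$ do not change; since $k$ is odd, $q\equiv q_0\pmod 4$ and $q_0-\varepsilon$ divides $q-\varepsilon$; and, by Lemma~\ref{l:r-part}(iii), $(q-\varepsilon)_2=(q_0-\varepsilon)_2$ whenever $4\mid q-\varepsilon$. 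Conditions (i) and (iii) depend only on $n$ and $p$; condition (iv) is equivalent for $q$ and for $q_0$ by the congruence $\bmod\,4$ and the equality of $2$\nobreakdash-parts; and conditions (ii) and (v) become operative only when some odd prime divides both $n$ and $q-\varepsilon$, so that $q_0-\varepsilon\mid q-\varepsilon$ forces their failure for $q_0$ once they fail for $q$. Hence Theorem~\ref{t:graph} yields $\omega(\tau PSL_n^\varepsilon(q_0))\subseteq\omega(PSL_n^\varepsilon(q_0))$.

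For the second assertion let $\varepsilon=+$ and assume $\tau$ and $\beta=\varphi^{m/k}$ admissible for $L$, with $k$ odd. In $\Out L$ the element $\overline\beta\,\overline\tau$ has order $2k$, and since $(\overline\beta\,\overline\tau)^k=\overline\tau$ and $(\overline\beta\,\overline\tau)^2$ generates $\langle\overline\beta\rangle$, we get $\langle\overline\beta\,\overline\tau\rangle=\langle\overline\beta,\overline\tau\rangle$; thus $\langle L,\beta\tau\rangle=\langle L,\beta,\tau\rangle$, and admissibility of $\beta\tau$ means precisely that $\omega(xL)\subseteq\omega(L)$ for every coset $xL$ with $x\in\langle\beta,\tau\rangle$, that is, for the cosets $\beta^iL$ and $\beta^i\tau L$ with $0\le i<k$. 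For the first type, $\beta^i\in\langle\beta\rangle$ gives $\omega(\beta^iL)\subseteq\omega(\langle L,\beta\rangle)=\omega(L)$ since $\beta$ is admissible. For the cosets $\beta^i\tau L$ I would argue by induction on $k$: for $k=1$ one has $\beta\tau=\tau$; for $k>1$, if $e=(i,k)>1$ and $k'=k/e<k$, then $\beta^e=\varphi^{m/k'}$ and $\beta^i\in\langle\beta^e\rangle$, so $\beta^i\tau\in\langle L,\varphi^{m/k'}\tau\rangle$, which is admissible by the induction hypothesis for $k'$ (as $\beta^e$ is a power of the admissible $\beta$ and $\tau$ is admissible), whence $\omega(\beta^i\tau L)\subseteq\omega(L)$.

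It remains to treat $\beta^i\tau L$ with $(i,k)=1$. For $i=0$ this is $\tau L$, admissible by hypothesis, and for $i=1$
\[
\omega(\beta\tau L)=k\cdot\omega(\tau PSL_n(q_0))\subseteq k\cdot\omega(PSL_n(q_0))=\omega(\beta L)\subseteq\omega(L),
\]
where the first equality is Lemma~\ref{l:reduce}(iv) (valid since $k$ is odd), the middle inclusion is the first assertion of the present lemma, the second equality is Lemma~\ref{l:reduce}(i), and the last inclusion holds because $\beta$ is admissible. For the remaining values of $i$ coprime to $k$, the automorphism $\beta^i$ still generates the Galois group $\langle\beta\rangle$ with fixed field $F_{q_0}$, and the same type of Lang--Steinberg reduction---applying Lemma~\ref{l:shin} to the Steinberg endomorphism of $GL_n(F)$ lifting $\beta^i\tau$---expresses $\omega(\beta^i\tau L)$ in terms of $k$ times the spectrum of a graph or graph-diagonal coset of a linear or unitary group over a subfield of $F_q$; one then concludes $\omega(\beta^i\tau L)\subseteq\omega(L)$ from the first assertion, the identities (\ref{e:nodd}) and (\ref{e:neven}), Lemma~\ref{l:reduce}(i), and the $L$--$U$ symmetry at the end of Lemma~\ref{l:reduce}.

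The principal obstacle is the first assertion: one has to run through the five exceptional configurations of Theorem~\ref{t:graph} and verify that each is stable under the substitution $q\mapsto q^{1/k}$ for odd $k$, configuration~(v), with its conditions on $(n)_{2'}$ and $(n,q-\varepsilon)_{2'}$, being the most delicate. Granting that, the second assertion is essentially formal modulo Lemma~\ref{l:reduce}; the one point that still needs care there is that every coset $\beta^i\tau L$, and not merely $\beta\tau L$, must be brought under control, which is why the induction and the separate Lang--Steinberg argument for $i$ coprime to $k$ are needed.
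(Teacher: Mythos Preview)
Your argument for the first assertion is the paper's: argue by contrapositive via Theorem~\ref{t:graph}, checking that each of the configurations (i)--(v) passes from $q_0$ to $q$ when $k$ is odd, using $q\equiv q_0\pmod4$, $(q_0-\varepsilon)\mid(q-\varepsilon)$ and $(q-\varepsilon)_2=(q_0-\varepsilon)_2$. One small slip: condition~(i) carries the clause $q\equiv-\varepsilon\pmod 4$, so it does not depend only on $n$ and $p$; but your own observation $q\equiv q_0\pmod4$ already covers it, so nothing is lost.

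For the second assertion the paper simply asserts ``it suffices to show that $\omega(\beta\tau L)\subseteq\omega(L)$'' and then gives exactly your displayed chain
\[
\omega(\beta\tau L)=k\cdot\omega(\tau PSL_n(q_0))\subseteq k\cdot\omega(PSL_n(q_0))=\omega(\beta L)\subseteq\omega(L).
\]
Your further attempt to justify this sufficiency by treating every coset $\beta^i\tau L$ goes beyond what the paper does, and the last step---applying Lemma~\ref{l:shin} to $\beta^i\tau$ for $(i,k)=1$, $i\ne1$---does not go through as written: the Frobenius endomorphism $\alpha=\varphi_p^{im/k}\tau$ lifting $\beta^i\tau$ satisfies $\alpha^k\tau=\varphi_q^{\,i}$, whose fixed-point group is $GL_n(q^i)$ rather than $GL_n(q)$, so Lemma~\ref{l:shin} gives no direct descent to a group over $F_{q_0}$. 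The paper tacitly uses, here and in the proofs of Lemmas~\ref{l:oddpart4}, \ref{l:field_u} and~\ref{l:evenfield}, that for a field or graph\nobreakdash-field automorphism $\gamma$ the set $\omega(\gamma L)$ depends only on the cyclic subgroup $\langle\overline\gamma\rangle\leqslant\Out L$; granting that convention, your induction and the separate case $(i,k)=1$ become unnecessary, and the single displayed chain already settles the matter.
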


\begin{proof}[{\bf Proof}] 
Assume that $\omega(\tau PSL_n^\varepsilon(q_0))\not\subseteq\omega(PSL_n^\varepsilon(q_0))$. This implies that the numbers $n$ and $q_0$ satisfy the conditions of one of the items (i)--(v) of Theorem  \ref{t:graph}.
Since $k$ is odd, it follows that $(n,q_0-\varepsilon)$ divides $(n,q-\varepsilon)$ and $(q-1)_2=(q_0-1)_2$, and thus the numbers $n$ and $q$ satisfy the same conditions.
This contradicts the hypothesis that $\omega(\tau L)\subseteq\omega(L)$.

To prove the second assertion, it suffices to show that $\omega(\beta\tau L)\subseteq\omega(L)$. By Lemma \ref{l:reduce}, the proved containment and admissibility of $\beta$, we see that $$\omega(\beta\tau L)=k\cdot \omega(\tau PSL_n(q_0))\subseteq k\cdot\omega(PSL_n(q_0))=\omega(\beta L)\subseteq\omega(L),$$ and the proof is complete.
\end{proof}

\section{Admissible groups}

In this section, we will prove Theorems \ref{t:main} and \ref{t:mainu}. As we mentioned, the theorems with $n=3$ were proved in \cite{04Zav, 06Zav.t}, and so we assume that $n\geqslant 4$.
Throughout the section, $q=p^m$ is odd, $L=PSL_n^\varepsilon(q)$ and $G$ is a group such that $L<G\leq \Aut L$. Also, we fix the numbers $d=(n,q-\varepsilon)$ and $b=((q-\varepsilon)/d,m)_d$.

We begin with lemmas that holds for both linear and unitary groups. We say that a subgroup of $\Out L$ is admissible if it is the image of an admissible group.

\begin{lemma}\label{l:diag} If $G\cap \Inndiag L>L$, then $\omega(G)\neq\omega(L)$. In particular, admissible groups of $\Out L$ are abelian 
and any non-trivial admissible subgroup of the group $\langle \overline{\delta}\rangle \rtimes \langle \overline{\tau}\rangle$ is conjugate in this group to $\langle \overline\tau\rangle$.
\end{lemma}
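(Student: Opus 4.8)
The plan is to show first that $PGL_n^\varepsilon(q)$ already has an element whose order does not lie in $\omega(L)$, which forces $\omega(G)\neq\omega(L)$ whenever $G\cap\Inndiag L>L$. Recall that $\Inndiag L$ is (the image of) $PGL_n^\varepsilon(q)$, and that $d=(n,q-\varepsilon)>1$ is precisely the case of interest — if $d=1$ there is nothing to prove since $\Inndiag L=L$. By Lemma \ref{l:spec_an} the spectrum of $L=PSL_n^\varepsilon(q)$ is obtained from that of $PGL_n^\varepsilon(q)$ by dividing the relevant numbers by $d$ (compare item (i) for the two groups). So I would exhibit a semisimple element of $PGL_n^\varepsilon(q)$ of order $(q^n-\varepsilon^n)/(q-\varepsilon)$, a Singer-type cycle, which is divisible by a primitive prime divisor $r_n(\varepsilon q)$ (this exists for $n\geqslant 4$ by Lemma \ref{l:zsi}, the excluded tiny cases not arising since $q$ is odd). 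The order of any element of $L$ divisible by such an $r$ must divide one of the numbers in Lemma \ref{l:spec_an}, and the only one divisible by $r_n(\varepsilon q)$ is $(q^n-\varepsilon^n)/((q-\varepsilon)d)$; since $(q^n-\varepsilon^n)/(q-\varepsilon)$ does not divide its own $d$-th part when $d>1$ (one checks $((q^n-\varepsilon^n)/(q-\varepsilon))_r=(q-\varepsilon)_r\cdot(n)_r$ is not reduced by dividing, using Lemma \ref{l:r-part}, or more simply that the $r$-part is unaffected by $d$ only if $(d)_r=1$ for all $r\mid d$, which fails), we get an element of $\Inndiag L$ of order not in $\omega(L)$. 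This gives $\omega(G)\supsetneq\omega(L)$, hence $\neq$.

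For the "in particular" statements I would argue as follows. If $A\leqslant\Out L$ is admissible, then by the first part $A\cap\langle\overline\delta\rangle=1$ (otherwise the preimage of $A$ meets $\Inndiag L$ properly and is not admissible). Now the relations defining $\Out L$ (given in the excerpt) show that $\langle\overline\delta\rangle$ is the unique normal subgroup with quotient $\langle\overline\varphi,\overline\tau\rangle$ abelian except for the $\overline\delta^{\overline\varphi}=\overline\delta^{\overline p}$ and $\overline\delta^{\overline\tau}=\overline\delta^{-1}$ conjugation; concretely, the commutator subgroup of $\Out L$ lies inside $\langle\overline\delta\rangle$, so any subgroup intersecting $\langle\overline\delta\rangle$ trivially is abelian. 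Thus admissible subgroups of $\Out L$ are abelian.

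For the last claim, work inside the dihedral group $D=\langle\overline\delta\rangle\rtimes\langle\overline\tau\rangle$ of order $2d$. A non-trivial admissible subgroup $A\leqslant D$ must, by what was just shown, intersect the cyclic subgroup $\langle\overline\delta\rangle$ trivially, so $A$ is an abelian subgroup of $D$ meeting the index-$2$ cyclic part trivially; the only such subgroups of a dihedral group are the order-$2$ subgroups generated by a reflection, i.e. conjugates (under $\langle\overline\delta\rangle$, hence under $D$) of $\langle\overline\tau\rangle$. Conversely $\langle\overline\tau\rangle$ is admissible precisely when $\omega(\tau L)=\omega(L)$, which is the content of Theorem \ref{t:graph} — but here we only need that $A$ must be of this form, not that it actually is admissible, so no appeal to Theorem \ref{t:graph} is required for the stated conclusion (the word "admissible" in the conclusion should be read as "candidate", or one simply notes that if $A$ is genuinely admissible it is $D$-conjugate to $\langle\overline\tau\rangle$).

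The main obstacle is the first step: one must be careful about which number in Lemma \ref{l:spec_an} the chosen Singer element's order lies in, and verify cleanly that division by $d$ genuinely destroys membership. The cleanest route is the $r$-adic one — pick a prime $r\in R_n(\varepsilon q)$, note $r\mid d$ need not hold but $r\nmid q^i-\varepsilon^i$ for $i<n$ forces the order of any $r$-singular element of $L$ to be a divisor of $(q^n-\varepsilon^n)/((q-\varepsilon)d)$, and then the element of $PGL_n^\varepsilon(q)$ of order $(q^n-\varepsilon^n)/(q-\varepsilon)$ works as long as $d>1$, because dividing by $d$ strictly decreases the number (it cannot leave all multiples intact). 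A small amount of care is also needed for $d>1$ to actually occur together with the existence of $r_n(\varepsilon q)$, but for $n\geqslant 4$ and odd $q$ this is immediate from Lemma \ref{l:zsi}. I would also double-check the degenerate possibility $n=4$, $\varepsilon=-$ against the excluded Zsigmondy cases, but $(q,-,4)$ is never on the excluded list.
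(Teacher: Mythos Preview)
Your argument for the first assertion has a small gap: you exhibit an element of order $(q^n-\varepsilon^n)/(q-\varepsilon)$ in $PGL_n^\varepsilon(q)$ and show it is not in $\omega(L)$, but the hypothesis is only $G\cap\Inndiag L>L$, so $G\cap\Inndiag L$ may be a \emph{proper} subgroup of $PGL_n^\varepsilon(q)$ and need not contain your Singer-type element. The paper fixes this by taking $i=|G\cap\Inndiag L|/|L|>1$ and observing (from Lemma~\ref{l:spec_an} applied to the intermediate group) that $G$ then contains an element of order $(q^n-\varepsilon^n)i/((q-\varepsilon)d)$; equivalently, the $(d/i)$-th power of your Singer element lies in $G\cap\Inndiag L$ and its order still exceeds the bound $(q^n-\varepsilon^n)/((q-\varepsilon)d)$ from $\omega(L)$. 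This is easily repaired, and your primitive-divisor argument is then unnecessary: the element order simply does not divide the maximal one in $\omega(L)$.

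The more serious gap is in the dihedral step. You assert that in $D=\langle\overline\delta\rangle\rtimes\langle\overline\tau\rangle$ every order-$2$ subgroup generated by a reflection is conjugate to $\langle\overline\tau\rangle$. This is false when $d=(n,q-\varepsilon)$ is even: in a dihedral group of order $2d$ with $d$ even the reflections split into two conjugacy classes, represented by $\overline\tau$ and $\overline\delta\,\overline\tau$. Since $q$ is odd, $d$ is even precisely when $n$ is even, so this case genuinely occurs. The paper handles it by invoking Lemma~\ref{l:graph_diag}, which shows $\omega(\tau\delta\,PSL_n(q))\not\subseteq\omega(PSL_n(q))$ for even $n$, so $\langle\overline\delta\,\overline\tau\rangle$ is never admissible. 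Without that lemma (or an equivalent calculation in the coset $\tau\delta L$) the conclusion does not follow; your remark that ``no appeal to Theorem~\ref{t:graph} is required'' is correct, but an appeal to Lemma~\ref{l:graph_diag} is.
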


\begin{proof}[{\bf Proof}]  By Lemma \ref{l:spec_an}, if $|G\cap \Inndiag L|/|L|=i>1$, then $G$ has an element of order $(q^n-\varepsilon^n)i/(q-\varepsilon)d$, which does not lie in  $\omega(L)$.
Thus admissible groups of $\Out L$ can be embedded into the image of the group generated by $\varphi$ and $\gamma$, which is abelian. The group $\langle \overline{\delta}\rangle \rtimes \langle \overline{\tau}\rangle$
is dihedral, and so every its subgroup that intersects trivially with $\langle \overline{\delta}\rangle$ is conjugate to $\langle \overline\tau\rangle$ or $\langle \overline\delta\overline\tau\rangle$, and in the latter case 
we may assume that $n$ is even. But Lemma \ref{l:graph_diag} says that $\langle \overline\delta\overline\tau\rangle$ is not admissible.
\end{proof}

\begin{lemma}\label{l:oddpart}
Suppose that $n\geqslant 5$ and $|G/L|$ is odd. Then $\omega(G)=\omega(L)$ if and only if $n-1$ is not a $p$-power, $G/L$ is conjugate in $\Out L$ to a subgroup of $\langle\overline\varphi\rangle$ and $|G/L|$ divides $((q-\varepsilon)/d,m)_d$.
\end{lemma}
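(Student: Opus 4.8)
The plan is to reduce the problem to extensions by a single field automorphism and then apply Lemma~\ref{l:reduce} together with the spectrum formulas of Lemma~\ref{l:spec_an}. First I would invoke Lemma~\ref{l:diag}: since $\omega(G)=\omega(L)$ forces $G\cap\Inndiag L=L$ and the image of $G$ in $\Out L$ abelian, and since $|G/L|$ is odd, the graph automorphism $\tau$ (of order $2$ in the linear case, and accounting for the $\langle\varphi\rangle\ni\varphi^m=\tau$ relation in the unitary case) cannot contribute: more precisely $\overline{G}$ lies in $\langle\overline\delta\rangle\times\langle\overline\varphi\rangle$, so $\overline{G}$ is conjugate to a subgroup of $\langle\overline\varphi\rangle$ exactly when $G\cap\Inndiag L=L$; and the latter is necessary by Lemma~\ref{l:diag}. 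Thus we may assume $\overline{G}=\langle\overline\beta\rangle$ where $\beta=\varphi^{m/k}$ for $k=|G/L|$ odd, and we must show $\omega(\beta L)=\omega(L)$ iff $n-1$ is not a $p$-power and $k$ divides $b=((q-\varepsilon)/d,m)_d$.

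Next I would use Lemma~\ref{l:reduce}(i) (in the linear case) or combine it with the $\varepsilon=-$ bookkeeping (Lemma~\ref{l:reduce} plus the remark that $\langle\varphi^{m/k}\tau\rangle$ is the relevant cyclic group when $k$ is odd in the unitary case) to rewrite
\[
\omega(\beta L)=k\cdot\omega(\delta^0(q_0)PSL_n^\varepsilon(q_0))=k\cdot\omega(PSL_n^\varepsilon(q_0)),\qquad q_0=q^{1/k}.
\]
So the question becomes: when is $k\cdot\omega(PSL_n^\varepsilon(q_0))\subseteq\omega(PSL_n^\varepsilon(q))$? Using Lemma~\ref{l:spec_an} I would analyze each family of generators of $\omega(PSL_n^\varepsilon(q_0))$, multiplied by $k$, and check divisibility against the corresponding family for $q$. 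The numbers of the form $q_0^{n_i}-\varepsilon^{n_i}$ multiplied by $k$ cause no trouble because $k\mid m$ and $q_0^{n_i}-\varepsilon^{n_i}$ already divides $q^{n_i}-\varepsilon^{n_i}$ while the extra factor $k$ can be absorbed into a $q-\varepsilon$ term via $k\mid(q-\varepsilon)_{?}$—this is where the precise definition of $b$ enters. The delicate points are the unipotent-type generators $p^t(q_0^{n_1}-\varepsilon^{n_1})/d_0$ and the exceptional generator $(n,q_0-\varepsilon)p^t/d_0$ when $n=p^{t-1}+1$: multiplying these by $k$ (which is coprime to $p$) and comparing $p$-parts and prime-to-$p$ parts is what produces both the condition "$n-1$ not a $p$-power" and the condition $k\mid b$.

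The main obstacle I expect is the careful arithmetic showing that $k\cdot\omega(PSL_n^\varepsilon(q_0))\subseteq\omega(L)$ holds precisely when $k\mid b=((q-\varepsilon)/d,m)_d$: one direction needs, for each admissible divisor $c$ of an element of $\omega(PSL_n^\varepsilon(q_0))$, to exhibit an element of $\omega(L)$ that $kc$ divides, keeping track of $d$-parts via Lemma~\ref{l:r-part} and Lemma~\ref{l:gcd}; the other direction needs a specific $c$ (taken from the family $(q_0^n-\varepsilon^n)/((q_0-\varepsilon)d_0)$, or from a two-part join) such that $kc\notin\omega(L)$ whenever $k\nmid b$, by comparing $\pi$-parts for a prime $r\in\pi(k)\setminus\pi(b)$. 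The "$n-1$ not a $p$-power" clause follows by testing the generator of type (vi): if $n=p^{t-1}+1$ then $2p^t$-type obstructions force $\omega(\beta L)\ne\omega(L)$ already for $k>1$, since the $p$-part of elements of $\omega(L)$ is bounded by $p^t$ but the field extension would require a strictly larger element. I would organize the write-up as: (1) reduce via Lemmas~\ref{l:diag} and \ref{l:reduce} to $k\cdot\omega(PSL_n^\varepsilon(q_0))\subseteq\omega(L)$; (2) show this fails if $n-1$ is a $p$-power; (3) assuming $n-1$ is not a $p$-power, show the containment is equivalent to $k\mid b$ by the two-directional divisibility analysis above.
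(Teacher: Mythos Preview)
The paper does not actually prove this lemma: it simply cites \cite[Propositions 6,7]{08Gr.t} for $\varepsilon=+$ and \cite[Proposition 6]{13GrShi} for $\varepsilon=-$. Your proposal is therefore an attempt at a self-contained argument, and its overall shape---reduce via Lemma~\ref{l:reduce} to comparing $k\cdot\omega(PSL_n^\varepsilon(q_0))$ with $\omega(L)$ and then run through the generators of Lemma~\ref{l:spec_an}---is the right one and matches what the cited papers do.

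There is, however, a genuine gap in your reduction step. You write that ``$\overline{G}$ lies in $\langle\overline\delta\rangle\times\langle\overline\varphi\rangle$, so $\overline{G}$ is conjugate to a subgroup of $\langle\overline\varphi\rangle$ exactly when $G\cap\Inndiag L=L$''. The product is semidirect, not direct, and the asserted equivalence is false. Lemma~\ref{l:diag} gives only $\overline G\cap\langle\overline\delta\rangle=1$; it does \emph{not} follow that $\overline G$ is conjugate into $\langle\overline\varphi\rangle$. Concretely, take $L=PSL_5(11^5)$: here $d=5$, $\overline\delta^{\overline\varphi}=\overline\delta^{11}=\overline\delta$, so $\langle\overline\delta,\overline\varphi\rangle\simeq\mathbb Z/5\times\mathbb Z/5$, and the cyclic subgroup $\langle\overline\delta\,\overline\varphi\rangle$ has trivial intersection with $\langle\overline\delta\rangle$ yet is not conjugate to $\langle\overline\varphi\rangle$ in $\Out L$ (conjugation by $\overline\delta$ is trivial, and $\overline\tau$ sends it to $\langle\overline\delta^{-1}\overline\varphi\rangle$). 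Since the lemma asserts that such $G$ are \emph{not} admissible, you must prove this directly: apply Lemma~\ref{l:reduce}(i) with $i\neq 0$ to get $\omega(\delta^i\beta L)=k\cdot\omega(\delta^i(q_0)PSL_n^\varepsilon(q_0))$, and then exhibit an element of the nontrivial coset $\delta^i(q_0)PSL_n^\varepsilon(q_0)$ whose order, multiplied by $k$, falls outside $\omega(L)$. This is the step your outline skips, and without it the forward implication is incomplete.
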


\begin{proof}[{\bf Proof}] 
If $\varepsilon=+$, the assertion is proved in \cite[Propositions 6,7]{08Gr.t}, and if $\varepsilon=-$, it is proved in \cite[Proposition 6]{13GrShi}.
\end{proof}

\begin{lemma}\label{l:oddpart4}
Let $n=4$ and $|G/L|$ is odd.  Then $\omega(G)=\omega(L)$ if and only if $12$ divides $q+\varepsilon$ and $G/L$ is a $3$-group.
\end{lemma}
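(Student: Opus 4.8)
\textbf{Plan for the proof of Lemma~\ref{l:oddpart4}.}
The statement concerns the group $L=PSL_4^\varepsilon(q)$ and a group $G$ with $L<G\le\Aut L$ and $|G/L|$ odd. My plan is to reduce the analysis to subgroups of $\langle\overline\varphi\rangle$ of odd order and then settle the case $n=4$ directly, which was excluded from Lemma~\ref{l:oddpart} (where $n\ge5$ was assumed). First I would invoke Lemma~\ref{l:diag}: since $G/L$ is abelian and admissible, and since $|G/L|$ is odd, the image of $G$ in $\Out L$ avoids $\langle\overline\tau\rangle$ entirely (the relevant element $\tau$ has order $2$, or $4$ in the unitary case), so $G/L$ is conjugate in $\Out L$ to a subgroup of $\langle\overline\delta\rangle\rtimes\langle\overline\varphi\rangle$ of odd order; moreover by Lemma~\ref{l:diag} again $G\cap\Inndiag L=L$, hence $G/L$ is conjugate to a subgroup of $\langle\overline\varphi\rangle$ (in the linear case directly, in the unitary case because an odd-order subgroup of $\langle\varphi\rangle$ that is disjoint from $\langle\delta\rangle$ and $\langle\tau\rangle$ is generated by a power of $\varphi$ of odd order, using $\varphi^m=\tau$). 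So it remains to determine for which odd $k\mid m$ the group $G=L\langle\varphi^{m/k}\rangle$ is admissible.

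Next I would bring in Lemma~\ref{l:reduce}(i): writing $q=q_0^k$ and $\beta=\varphi^{m/k}$, we have $\omega(\beta L)=k\cdot\omega(PSL_4^\varepsilon(q_0))$, so $G$ is admissible if and only if $k\cdot\omega(PSL_4^\varepsilon(q_0))\subseteq\omega(L)$. Here the crux is that $n=4$ and $n-1=3$ is a power of $p$ exactly when $p=3$; the argument of Lemma~\ref{l:oddpart} breaks precisely because the ``small'' element of order $k\cdot p\cdot$ (semisimple part) coming from item (iv) or (vi) of Lemma~\ref{l:spec_an} cannot be accommodated in $\omega(L)$ unless $p=3$ and the field degree $k$ is controlled. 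Concretely, for $q_0$ with $q_0^k=q$ and $p\ge5$, one produces an element of $\beta L$ of order $kp$ times a primitive-prime-divisor contribution $r_2(\varepsilon q_0)$; using Lemma~\ref{l:spec_an}(iv)--(vi) and Lemma~\ref{l:gcd}(v) I would show this order lies in $\omega(L)$ iff $12\mid q+\varepsilon$ and $k$ is a power of $3$ — the $12\mid q+\varepsilon$ condition guaranteeing that $\omega(L)$ contains the requisite element of order $kp(q_0+\varepsilon)$ or similar, through the divisor $p(q^{n_1}-\varepsilon^{n_1})/d$ with $n_1=1$ and $d=(4,q-\varepsilon)$ having the right $2$- and $3$-parts. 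Conversely, when $12\mid q+\varepsilon$ and $k$ is a $3$-power I would check that every number of the form $k\cdot a$ with $a\in\omega(PSL_4^\varepsilon(q_0))$ divides one of the numbers of Lemma~\ref{l:spec_an}(i)--(vi) for the pair $(n,q)=(4,q_0^k)$, going through the short list of generators of $\omega(PSL_4^\varepsilon(q_0))$ case by case (the partitions of $4$ are $4$, $3+1$, $2+2$, $2+1+1$, $1+1+1+1$, plus the unipotent-type items with $p^{t-1}+1\in\{2,4\}$, i.e.\ $p=3$).

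The main obstacle I anticipate is the delicate interplay of the $3$-part and the $2$-part of $d=(4,q-\varepsilon)$ in the unitary versus linear case: one must verify that multiplying by a $3$-power $k$ and by $p=3$ does not push the $3$-adic valuation of a candidate element-order above that available in $\omega(L)$, and this is exactly where the hypothesis $12\mid q+\varepsilon$ (forcing $4\mid q-\varepsilon$ and $3\mid q-\varepsilon$, hence $d=4$ — wait, $3\nmid d$ since $(4,q-\varepsilon)$ has no factor $3$) intervenes: the condition $3\mid q+\varepsilon$ means $r=3$ divides $q-\varepsilon$ but $3\nmid(4,q-\varepsilon)$, so the unipotent $3$-element and the $3$-part of the torus combine freely, which is precisely what makes room for the extra factor $k$. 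I would handle linear and unitary uniformly via the $\varepsilon$-notation, appealing to Lemma~\ref{l:r-part} for the $r$-part computations and to Lemma~\ref{l:gcd}(iv),(v) for the divisibility reductions, and I would isolate the borderline subcase $k=1$ (where $G=L$, excluded by hypothesis) so that the statement reads for $k>1$ only.
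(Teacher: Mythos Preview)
Your overall skeleton --- reduce to odd-order subgroups of $\langle\overline\varphi\rangle$ via Lemma~\ref{l:diag}, then use Lemma~\ref{l:reduce} to translate the question into whether $k\cdot\omega(PSL_4^\varepsilon(q_0))\subseteq\omega(L)$, and finally check the short list of maximal element orders of $PSL_4^\varepsilon(q_0)$ --- is exactly the paper's strategy, and your sketch of the converse (sufficiency) direction is essentially what the paper does. But the necessity direction in your plan has a genuine gap and several arithmetic slips that would derail the argument.

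First, the arithmetic. The hypothesis $12\mid q+\varepsilon$ forces $4\mid q+\varepsilon$ and $3\mid q+\varepsilon$, hence $d=(4,q-\varepsilon)=2$ (not $4$), $3\nmid q-\varepsilon$, and in particular $p\neq 3$. So the admissible situation is precisely when $p\neq 3$; your remark that things ``cannot be accommodated in $\omega(L)$ unless $p=3$'' has it backwards, and your comment about the unipotent $3$-element combining with a $3$-part of the torus is off because there is no $3$-part in $q-\varepsilon$. These sign and parity confusions matter: with $d=2$ and $p\neq 3$, the maximal orders in $\omega(PSL_4^\varepsilon(q_0))$ are exactly $(q_0^2+1)(q_0+\varepsilon)/2$, $(q_0^3-\varepsilon)/2$, $q_0^2-1$, and $p(q_0^2-1)/2$, and the converse reduces to showing $k$ times each of these divides a number from Lemma~\ref{l:spec_an} for $(4,q)$.

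Second, and more seriously, your forward direction does not contain a mechanism that forces every prime $r$ dividing $|G/L|$ to equal $3$. You propose to look at an element of order $kp\cdot r_2(\varepsilon q_0)$, but $R_2$ is not defined, and an element built from the $p$-part alone does not distinguish $r=3$ from other primes. The paper's argument here is short and is the missing idea: for a prime $r\mid |G/L|$ with $q=q_0^r$, the numbers $r\cdot r_4(q_0)$ and $r\cdot r_3(\varepsilon q_0)$ both lie in $\omega(L)$. Since $r_4(q_0)\in R_4(q)$, the first forces $r\mid (q^2+1)(q+\varepsilon)/d$, so $r\nmid q-\varepsilon$. If $r\neq 3$ then $r_3(\varepsilon q_0)\in R_3(\varepsilon q)$, and the second forces $r\mid (q^3-\varepsilon)/d$; but these two divisors are coprime, a contradiction. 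Hence $r=3$ and $3\mid q+\varepsilon$. Then one more element, $3p(q_0-\varepsilon)\in\omega(L)$, yields $4\mid q+\varepsilon$. Without this primitive-prime-divisor step your proposal has no way to exclude, say, $r=5$.
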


\begin{proof}[{\bf Proof}] Let $\omega(G)=\omega(L)$ and $r\in\pi(G/L)$. Then $G$ contains a field automorphism of order $r$, and hence
by Lemma  \ref{l:reduce},  $\omega(G)$ includes $r\cdot\omega(PSL_4^\varepsilon(q_0))$, where $q=q_0^r$. It follows that $rr_4(q_0), rr_3(\varepsilon q_0)\in\omega(L)$.
Since $r_4(q_0)\in R_4(q)$, we see that $r$ divides $a=(q^2+1)(q+\varepsilon)/d$ and, in particular, it does
not divide $q-\varepsilon$. If $r\neq 3$, then $r_3(\varepsilon q_0)\in R_3(\varepsilon q)$, and so $r$ divides $c=(q^3-\varepsilon)/d$, which is a contradiction because $(a,c)=1$.
Let $r=3$. Then $3$ divides $q+\varepsilon$. Since $3p(q_0-\varepsilon)\in\omega(L)$, we have that $3(q_0-\varepsilon)$ divides $(q^2-1)/d$, and thus $4$ divides $q+\varepsilon$.

Conversely, let $12$ divide $q+\varepsilon$ and $G/L$ be a 3-group. We may assume that $G$ is the extension of $L$ by a field automorphism of order dividing $(m)_3$.
To prove that $\omega(G)\subseteq \omega(L)$, it suffices to  check that $k\cdot\omega(PSL_4^\varepsilon(q_0))\subseteq \omega(PSL^\varepsilon_4(q))$ with  $q=q_0^{k}$ for
every divisor $k$ of $(m)_3$. Note that $12$ divides $q_0+\varepsilon$, $(q+\varepsilon)_{3}=k(q_0+\varepsilon)_{3}$ and $(4,q_0-\varepsilon)=2=d$. Since $p\neq 3$, 
the set $\omega(PSL_4^\varepsilon(q_0))$ consists of divisors of $(q_0^2+1)(q_0+\varepsilon)/2$, $(q_0^3-\varepsilon)/2$, $q_0^2-1$ and $p(q_0^2-1)/2$. 
Observing that $$k(q_0^2+1)(q_0+\varepsilon)\mid (q^2+1)(q+\varepsilon),$$ $$k(q_0^3-\varepsilon), k(q_0^2-1)\mid q^2-1,$$
we obtain the desired containment.
\end{proof}

By Lemmas \ref{l:oddpart} and \ref{l:oddpart4}, it follows that up to conjugacy $\Out L$ has  only one maximal admissible subgroup of odd order, and we can take this subgroup to be $\langle \overline\psi\rangle$, where
$\psi\in \langle \varphi\rangle$ has order $(b)_{2'}$ if $n>4$ or $q\not\equiv 1\pmod{12}$, and order $(m)_{3}$ if $n=4$ and $q\equiv -\varepsilon\pmod{12}$.

Let $\eta=\delta^{(d)_{2'}}$ and $S_2$ be the Sylow $2$-subgroup of $\Out L$ generated by $\overline\eta$, $\overline\varphi^{(m)_{2'}}$ and $\overline\tau$.

\begin{lemma}\label{l:direct} If $\omega(G)=\omega(L)$, then $G/L$ is conjugate in $\Out L$ to a subgroup of $\langle \overline \psi\rangle \times S_2$.
\end{lemma}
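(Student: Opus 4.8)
The plan is to treat the $2$-part and the odd part of $G/L$ separately, but to carry out the step for the $2$-part \emph{inside the centraliser of the odd part}, so that the two conjugations can be combined into one.

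First observe that if $L\leqslant H\leqslant G$, then $\omega(L)\subseteq\omega(H)\subseteq\omega(G)=\omega(L)$, so $H$ is admissible; hence every subgroup of $G/L$ is an admissible subgroup of $\Out L$. By Lemma \ref{l:diag} the group $G/L$ is abelian, so $G/L=P\times Q$, where $P$ is its Sylow $2$-subgroup and $Q$ its Hall $2'$-subgroup. Since $Q$ is an admissible subgroup of $\Out L$ of odd order, Lemmas \ref{l:oddpart} and \ref{l:oddpart4} together with the remark following them (that $\langle\overline\psi\rangle$ is, up to conjugacy, the unique maximal admissible subgroup of odd order) show that $Q$ is conjugate in $\Out L$ to a subgroup of $\langle\overline\psi\rangle$. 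As the assertion is up to conjugacy, after replacing $G/L$ by a suitable conjugate we may assume $Q\leqslant\langle\overline\psi\rangle$.

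Now set $C=C_{\Out L}(Q)$. Then $P\leqslant C$ because $G/L$ is abelian. Moreover $S_2\leqslant C$: since $\langle\overline\psi\rangle\times S_2$ is a subgroup of $\Out L$, the factor $S_2$ centralises $\langle\overline\psi\rangle$, and therefore $S_2\leqslant C_{\Out L}(\langle\overline\psi\rangle)\leqslant C_{\Out L}(Q)=C$. As $S_2$ is a Sylow $2$-subgroup of $\Out L$ and $S_2\leqslant C$, it is also a Sylow $2$-subgroup of $C$. Hence, by Sylow's theorem applied in $C$, there is $y\in C$ with $P^y\leqslant S_2$, and $Q^y=Q\leqslant\langle\overline\psi\rangle$ because $y$ centralises $Q$. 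Consequently $(G/L)^y=P^y\times Q^y$ is a subgroup of $S_2\times\langle\overline\psi\rangle=\langle\overline\psi\rangle\times S_2$, which is what we wanted.

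The one point that needs care is the coordination of the two conjugations: conjugating $P$ into $S_2$ and $Q$ into $\langle\overline\psi\rangle$ independently need not yield a single conjugate of $G/L$ contained in $\langle\overline\psi\rangle\times S_2$. This is exactly what is resolved by first placing $Q$ inside $\langle\overline\psi\rangle$ and then conjugating $P$ within $C_{\Out L}(Q)$; the key input making this work is that $\langle\overline\psi\rangle\times S_2$ is a genuine direct product, so that $S_2$ centralises $\langle\overline\psi\rangle$ and hence lies in $C_{\Out L}(Q)$.
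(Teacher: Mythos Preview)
Your overall strategy is the same as the paper's: split $G/L$ into its odd part and its $2$-part, conjugate the odd part into $\langle\overline\psi\rangle$, then conjugate the $2$-part into $S_2$ inside the centraliser of the odd part. The one genuine gap is your justification that $S_2\leqslant C_{\Out L}(\langle\overline\psi\rangle)$. You write ``since $\langle\overline\psi\rangle\times S_2$ is a subgroup of $\Out L$, the factor $S_2$ centralises $\langle\overline\psi\rangle$'', i.e.\ you read the direct-product notation in the statement as an already-established fact. But nothing prior to this lemma proves that $S_2$ and $\langle\overline\psi\rangle$ commute; that is precisely what the paper verifies inside the proof. The generators $\overline\varphi^{(m)_{2'}}$ and $\overline\tau$ clearly commute with $\overline\psi\in\langle\overline\varphi\rangle$, but $\overline\eta=\overline\delta^{(d)_{2'}}$ need not a priori: from the presentation of $\Out L$ one has $\overline\eta^{\overline\psi}=\overline\eta^{q_0}$ with $q=q_0^{|\psi|}$, and since $|\overline\eta|=(d)_2$ one must check that $(d)_2\mid q_0-1$. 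The paper does this short computation (using that $|\psi|$ is odd, so $(q_0-1)_2=(q-1)_2\geqslant(d)_2$ when $\varepsilon=+$, and analogously when $\varepsilon=-$). Once you insert that verification, your argument is complete and coincides with the paper's.
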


\begin{proof}[{\bf Proof}]  By Lemma \ref{l:diag},  the group $G/L$ is abelian, and hence is the direct product of its  Hall $2'$-subgroup $A_1$ and Sylow 2-subgroup $A_2$. Since $A_1$ is admissible, it is conjugate to a subgroup of $\langle \overline\psi\rangle$. Replacing $G/L$ by a conjugate if necessary, we may assume that $A_1\leqslant \langle \overline\psi\rangle$.

Note that $\overline\eta$ centralizes $\overline\psi$. Indeed, if $\varepsilon=+$, then $\overline\eta^{\overline\psi}=\overline\eta^{q_0}$, where $q=q_0^{|\psi|}$. 
Since  $|\psi|$ is odd, we have $(q-1)_2=(q_0-1)_2$, and hence $(d)_2$ divides $q_0-1$. If $\varepsilon=-$, then
$\overline\eta^{\overline\psi}=\overline\eta^{q_0}$, where $q^2=q_0^{|\psi|}$. Now $(q+1)_2<(q^2-1)_2=(q_0-1)_2$, and again $(d)_2$ divides $q_0-1$. It follows that the whole group $S_2$ centralizes $\overline\psi$.
Thus $A_2$ is conjugate in $C_{\Out L}(A_1)$ to a subgroup of $S_2$, and the whole group $G/L$ is conjugate to a subgroup of $\langle \overline\psi\rangle\times S_2$.
\end{proof}

The structure of $S_2$ varies according to linear or unitary groups are under consideration, so in the rest of this section we consider the cases $\varepsilon=+$ and $\varepsilon=-$ separately. 
We begin with the case of unitary groups, in which $S_2=\langle \overline\eta\rangle\rtimes \langle \overline\varphi^{(m)_{2'}}\rangle$.

\begin{lemma} \label{l:S2u} Let $\varepsilon=-$ and $1<G/L\leq S_2$. If $\omega(G)=\omega(L)$, then $G/L$ is conjugate to a subgroup of $\langle \overline\varphi^{(m)_{2'}}\rangle$.
\end{lemma}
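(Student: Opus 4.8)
The plan is to show that any admissible subgroup of $S_2=\langle\overline\eta\rangle\rtimes\langle\overline\varphi^{(m)_{2'}}\rangle$ meeting $\langle\overline\eta\rangle$ nontrivially fails to be admissible, so that by Lemma \ref{l:diag} (admissible groups are abelian, hence the part of $G/L$ inside the diagonal-$\times$-graph Klein group is conjugate to $\langle\overline\tau\rangle$, but here there is no $\overline\tau$) the only option left is $G/L\leqslant\langle\overline\varphi^{(m)_{2'}}\rangle$. More precisely, since $S_2$ is a $2$-group with cyclic quotient $\langle\overline\varphi^{(m)_{2'}}\rangle$ acting on the cyclic $2$-group $\langle\overline\eta\rangle$, if $G/L\not\leqslant\langle\overline\varphi^{(m)_{2'}}\rangle$ then, after conjugating, $G/L$ contains either $\overline\eta$ itself or the product $\overline\eta\,\overline\varphi^{j(m)_{2'}}$ of $\overline\eta$ with some field automorphism. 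I would split into these two cases.

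First, the case $\overline\eta\in G/L$. Here $G$ contains $\langle L,\delta^{(d)_{2'}}\rangle$, which strictly contains $L$ inside $\Inndiag L$ whenever $(d)_2>1$; by Lemma \ref{l:diag} this already forces $\omega(G)\neq\omega(L)$. (If $(d)_2=1$ then $\overline\eta=1$ and there is nothing to prove, so $S_2=\langle\overline\varphi^{(m)_{2'}}\rangle$ is cyclic and the statement is trivial.) Thus the real content is the mixed case.

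Second, the case where $G/L$ contains a diagonal-field element $\alpha=\eta\varphi^{j(m)_{2'}}$ with $1\leqslant j$ but $\alpha\notin\langle\overline\varphi^{(m)_{2'}}\rangle$, and moreover $G\cap\Inndiag L=L$ (otherwise we are back in the first case). Here I would apply Lemma \ref{l:reduce}(v): if $\gamma\in\langle\overline\varphi\rangle$ has the appropriate $2$-power order $2k$ with $q=q_0^k$ then $\omega(\gamma(-q)U)=k\cdot\omega(\tau\delta^i(q_0)PSL_n(q_0))$ for the relevant $i$ dictated by the diagonal twist $\eta$. The point is that conjugacy in $\Out L$ lets me replace $\alpha$ by a generator of $\langle\overline\alpha\rangle$ of the form $\varphi^{m/k}$ composed with a diagonal twist, and then $\omega(G)\supseteq\omega(\langle L,\alpha\rangle)=k\cdot\omega(\tau\delta^i(q_0)PSL_n(q_0))$. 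By Lemma \ref{l:graph_diag}, $k\cdot\omega(\tau\delta(q_0)PSL_n(q_0))\not\subseteq\omega(PSU_n(q))$ exactly in the shape needed (for $k$ a power of $2$), and one checks that the nonzero diagonal twist $i$ forced by $\eta$ lands in the residue class that makes $\delta^i$ act as $\delta^{\pm1}$ up to conjugacy in the dihedral group $\langle\overline\delta\rangle\rtimes\langle\overline\tau\rangle$, i.e.\ behaves like $\tau\delta$ rather than $\tau$; here one uses the relations $(n)_2\leqslant(q-\varepsilon)_2$-type arithmetic and the coprimality $(q+1)_2=2$ appearing in the proof of Lemma \ref{l:graph_diag}. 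This produces an element of $\omega(G)\setminus\omega(L)$, a contradiction.

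The main obstacle I anticipate is bookkeeping the exact diagonal exponent: one must verify that the twist by $\eta=\delta^{(d)_{2'}}$, when pushed through the field-to-unitary reduction of Lemma \ref{l:reduce}(v), corresponds precisely to the $\delta$-coset (not the trivial $\tau$-coset) in $PSL_n(q_0)$, since only for $\tau\delta$ does Lemma \ref{l:graph_diag} guarantee an element outside $\omega(PSU_n(q))$ — for plain $\tau$ the spectrum might be contained. Keeping careful track of whether $(i,n)$ is even or odd (equivalently whether $\det$-values land in squares) and matching that against the $\square$ versus $\boxtimes$ dichotomy of Lemmas \ref{l:sg}--\ref{l:graph_diag} is the delicate point; everything else is a short dihedral-group and $2$-adic valuation computation.
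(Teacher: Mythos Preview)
Your plan uses exactly the same tools as the paper --- Lemma~\ref{l:diag} for the pure diagonal part, then Lemma~\ref{l:reduce}(v) together with Lemma~\ref{l:graph_diag} for the mixed diagonal--field elements --- so the approach is the paper's. Two remarks will sharpen it.

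First, a slip: in the unitary case $\overline\tau=\overline\varphi^{\,m}$ does lie in $\langle\overline\varphi^{(m)_{2'}}\rangle\leq S_2$, so your parenthetical ``here there is no $\overline\tau$'' is wrong (though ultimately harmless).

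Second, and more to the point, the paper organizes the argument by the parity of $m$, and this is precisely what dissolves the bookkeeping you flag as the main obstacle. If $m$ is odd then $\langle\overline\varphi^{(m)_{2'}}\rangle=\langle\overline\tau\rangle$, so $S_2\leq\langle\overline\delta\rangle\rtimes\langle\overline\tau\rangle$ and the second assertion of Lemma~\ref{l:diag} finishes at once. If $m$ is even then $q\equiv 1\pmod 4$, whence $(q+1)_2=2$ and $|\overline\eta|=(d)_2=2$; thus $S_2=\langle\overline\eta\rangle\times\langle\overline\varphi^{(m)_{2'}}\rangle$ is a direct product, and every element outside the second factor is literally $\overline\eta\,\overline\varphi^{m/k}$ with no conjugation needed. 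Since $(d)_{2'}=d/2$ is odd, \eqref{e:neven} gives $\omega(\tau\delta^{d/2}(q_0)PSL_n(q_0))=\omega(\tau\delta(q_0)PSL_n(q_0))$ immediately, and Lemmas~\ref{l:reduce}(v) and~\ref{l:graph_diag} yield the contradiction. Your claim ``after conjugating, $G/L$ contains $\overline\eta\,\overline\varphi^{j(m)_{2'}}$'' with exponent exactly~$1$ on $\overline\eta$ tacitly relies on $|\overline\eta|=2$; making that observation up front is what turns the ``delicate point'' into a one-line parity check.
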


\begin{proof}[{\bf Proof}] 
We may assume that $n$ is even. If  $m$ is odd, then $S_2=\langle \overline\eta\rangle \rtimes \langle \overline\tau\rangle$ and by Lemma \ref{l:diag}, the group $G/L$ is conjugate to $\langle \overline\tau\rangle$.

Let $m$ be even. Then $|\overline\eta|=2$ and $S_2=\langle \overline\eta\rangle \times \langle \overline\varphi^{(m)_{2'}}\rangle$. Suppose that $G\not\leq \langle \overline\varphi^{(m)_{2'}}\rangle$. Then
$G$ contains $\varphi^{m/k}\eta$ for some $k>1$ dividing $(m)_2$. Let $q=q_0^k$. By Lemmas \ref{l:reduce} and \ref{l:graph_diag} together with Lemma \ref{e:neven}, we have 
$$\omega(\varphi^{m/k}\eta L)=k\cdot \omega(\tau\delta^{d/2}(q_0)PSL_n(q_0))=k\cdot \omega(\tau\delta(q_0)PSL_n(q_0))\not\subseteq\omega(L),$$ which is a contradiction. 
\end{proof}

Thus if $\varepsilon=-$ and $\tau$ is not admissible, then $S_2$ has no non-trivial admissible subgroups. 

\begin{lemma}\label{l:field_u}
Suppose that $\varepsilon=-$, $\tau$ is admissible, $k>1$ divides $(m)_2$ and $\beta=\varphi^{m/k}$. Then 
$\beta$ is admissible if and only if $(n)_2\leqslant 2$ or $n=4,8,12$. If $\beta$ is admissible and $\gamma\in\langle \varphi\rangle$ is admissible and has odd order, then $\gamma\beta$ is also admissible. 
\end{lemma}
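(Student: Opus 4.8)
The plan is to translate the admissibility of a field automorphism $\beta=\varphi^{m/k}$ into a statement about spectra over the subfield and then reduce to the results already available. First I would use Lemma~\ref{l:reduce}(v) (or Lemma~\ref{l:reduce}(iii), depending on the parity of $k$) to write $\omega(\beta L)=k\cdot\omega(\tau PSL_n(q_0))$ when $k$ is odd and $\omega(\beta L)=k\cdot\omega(\tau\delta^i(-q_0)PSU_n(q_0))$-type expressions when $k$ is even; since $\varepsilon=-$ and $k$ divides $(m)_2$, the relevant case is $k$ even, so $\beta$ acts on $U=PSU_n(q)$ as a genuine field automorphism of even order and Lemma~\ref{l:reduce} expresses $\omega(\beta L)$ through $k\cdot\omega(\tau\delta^i(q_0)PSL_n(q_0))=k\cdot\omega(\tau\delta^i(-q_0)PSU_n(q_0))$. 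The point is that admissibility of $\beta$ is equivalent to $k\cdot\omega(\tau PSL_n^{-}(q_0))\subseteq\omega(PSL_n^{-}(q))$ together with (if needed) a $\delta$-twisted variant; since we are assuming $\tau$ itself is admissible for $L$, Lemma~\ref{l:graph_field} already tells us $\tau$ is admissible for $PSL_n^{-}(q_0)$, so $\omega(\tau PSL_n^{-}(q_0))=\omega(PSL_n^{-}(q_0))$ and the containment to be checked collapses to $k\cdot\omega(PSL_n^{-}(q_0))\subseteq\omega(PSL_n^{-}(q))$ — which is exactly admissibility of the field automorphism $\beta$ for the plain simple group, a statement governed by Lemmas~\ref{l:oddpart} and~\ref{l:oddpart4} and the analysis of the $2$-part.

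Next I would pin down precisely which $n$ permit this. By Lemma~\ref{l:oddpart}, for $n\geqslant 5$ a field automorphism of order $k$ is admissible for $PSL_n^{-}(q)$ only if $n-1$ is not a $p$-power and $k$ divides $b=((q+1)/d,m)_d$; the obstruction that is special to the $2$-part is encoded by the behaviour of the element of order $q^{n/2}+\varepsilon^{n/2}$ and the graph-diagonal coset, which is exactly what Theorem~\ref{t:graph} and Lemma~\ref{l:graph_diag} control. The condition $(n)_2\leqslant 2$ versus $(n)_2>2$ enters because when $(n)_2>2$ the $2$-part of $d$ is large enough that the diagonal-twisted spectra $\omega(\tau\delta PSL_n(q_0))$ — which by Lemma~\ref{l:graph_diag} are \emph{not} contained in $\omega(PSL_n^{-}(q))$ unless $n$ is small — obstruct admissibility of $\beta$ scaled by $k$; the exceptions $n=4,8,12$ are precisely the small even values where the relevant primitive-divisor arguments in the proof of Lemma~\ref{l:graph_diag} degenerate (for $n=4$ one must separately invoke Lemma~\ref{l:oddpart4}, and for $n=8,12$ one checks the finitely many numbers from Lemma~\ref{l:spec_an} directly, comparing $2$-parts). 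So the structure of the argument is: (a) reduce via Lemma~\ref{l:reduce} to a subfield spectrum containment; (b) use Lemma~\ref{l:graph_field} to discard the $\tau$-part; (c) apply Lemmas~\ref{l:oddpart}, \ref{l:oddpart4} and the $2$-adic estimates behind Lemma~\ref{l:graph_diag} to decide the containment, obtaining the dichotomy $(n)_2\leqslant 2$ or $n\in\{4,8,12\}$.

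For the last sentence — if $\beta$ is admissible and $\gamma\in\langle\varphi\rangle$ has odd order and is admissible, then $\gamma\beta$ is admissible — I would argue as follows. Write $|\gamma|=j$ with $j$ odd, $q=q_1^{j}$, and use Lemma~\ref{l:reduce}(iii) (with $k=j$ odd) to get $\omega(\gamma\beta' U)=j\cdot\omega(\beta'' PSU_n(q_1))$ where $\beta''$ is the induced field automorphism on the smaller unitary group; more directly, since $\langle\gamma,\beta\rangle$ is cyclic (its image in $\Out L$ lies in the abelian group $\langle\overline\varphi\rangle$) and $(|\gamma|,|\beta|)=1$, the extension by $\gamma\beta$ factors as iterated field extensions, and one applies Lemma~\ref{l:reduce} twice: first peel off $\gamma$ to pass from $q$ to $q_1=q^{1/j}$, which by admissibility of $\gamma$ preserves spectrum inclusion, then peel off $\beta$, whose admissibility over the intermediate field follows from admissibility of $\beta$ over $F_q$ by the same Lemma~\ref{l:reduce} reduction (the $2$-part condition $(n)_2\leqslant 2$ or $n\in\{4,8,12\}$ and the non-$p$-power condition on $n-1$ are insensitive to replacing $q$ by an odd power root). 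Concretely: $\omega(\gamma\beta L)=j\cdot\omega(\beta_1 PSU_n(q_1))\subseteq j\cdot\omega(PSU_n(q_1))=\omega(\gamma L)\subseteq\omega(L)$, where $\beta_1$ is admissible for $PSU_n(q_1)$ by the first part of the lemma applied over $F_{q_1}$.

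The main obstacle I expect is the small-$n$ bookkeeping: showing that for $n=8$ and $n=12$ the scaled subfield spectrum $k\cdot\omega(PSL_8^{-}(q_0))$ (resp.\ $PSL_{12}^{-}$) really does embed in $\omega(PSL_n^{-}(q))$ despite $(n)_2>2$, which requires checking every family of numbers in Lemma~\ref{l:spec_an} and comparing $2$-parts and primitive-divisor positions — this is where the uniform Zsigmondy/primitive-divisor argument used for large $n$ in Lemma~\ref{l:graph_diag} fails and one is forced into case analysis; conversely, proving non-admissibility for all $n$ with $(n)_2>2$ outside $\{4,8,12\}$ requires exhibiting a specific element (of order roughly $2kp(q_0^{(n-2)/2}-\epsilon)$ or $k(q_0^{n/2}-\epsilon)$) in $\omega(\beta L)$ and showing its $2$-part is too large to divide any number in $\omega(PSL_n^{-}(q))$, exactly mirroring the $2$-part contradictions in the proof of Lemma~\ref{l:graph_diag}.
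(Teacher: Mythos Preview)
Your reduction in step (b) has a genuine gap. You want to invoke Lemma~\ref{l:graph_field} to conclude that $\tau$ is admissible for $PSU_n(q_0)$ and thereby collapse $k\cdot\omega(\tau PSL_n(q_0))$ to $k\cdot\omega(PSU_n(q_0))$. But Lemma~\ref{l:graph_field} requires the exponent to be \emph{odd}, whereas here $k>1$ divides $(m)_2$, so $k$ is a nontrivial power of $2$. When $q=q_0^k$ with $k$ even, admissibility of $\tau$ for $PSU_n(q)$ does \emph{not} propagate down to $PSU_n(q_0)$: the conditions in Theorem~\ref{t:graph} depend on $(q+1)_2$ and on $q\bmod 4$, both of which can change drastically under an even-degree field extension (indeed $(q+1)_2=2$ here, while $(q_0+1)_2$ is uncontrolled). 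So the $\tau$-part cannot be discarded, and the proposed reduction to ``admissibility of a plain field automorphism, governed by Lemmas~\ref{l:oddpart} and~\ref{l:oddpart4}'' breaks down --- those lemmas in any case treat only odd-order extensions, so they say nothing about your $k$.

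The paper instead keeps $\tau$ in the picture and works with the explicit descriptions of $\omega(\tau PSL_n(q_0))$ from Lemmas~\ref{l:graph_odd} and~\ref{l:graph_even}: for odd $n$ one has $2\cdot\omega(Sp_{n-1}(q_0))$, and for even $n$ the $p'$-part is $2\cdot\omega_{p'}(P\Omega_n^{\pm}(q_0))$ while the $p$-part is $2\cdot\omega_{\tilde p}(\Omega_{n+1}(q_0))$. One then runs through the generating numbers of these spectra (Lemmas~\ref{l:spec_cn_bn} and~\ref{l:spec_dn}) and checks directly whether $k$ times each lies in $\omega(L)$. The obstruction for $(n)_2\geqslant 4$ and $n\geqslant 16$ comes from the two-term numbers $a=[q_0^{n_1}-1,q_0^{n_2}+1]$ with $n_1,n_2$ odd coprime and larger than $1$: then $2k(a)_2=(q^2-1)_2$ forces $2ka$ to divide $[q^{2n_1}-1,q^{2n_2}-1]/(n/2,q+1)$, whose $2$-part is only $(q^2-1)_2/2$. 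The cases $n=4,8,12$ are then handled by hand, as you anticipated. Your treatment of the final sentence (peeling off $\gamma$ of odd order via Lemma~\ref{l:reduce}, then applying the first part over the intermediate field, with Lemma~\ref{l:graph_field} now legitimately available since $|\gamma|$ is odd) is essentially the paper's argument.
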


\begin{proof}[{\bf Proof}]  We show first that $\omega(\beta L)\subseteq \omega(L)$ if and only if $(n)_2\leqslant 2$ or $n=4,8,12$.
By Lemma \ref{l:reduce}, the set $\omega(\beta L)$ is equal to $k\cdot \omega(\tau PSL_n(q_0))$, where  $q=q_0^k$.

Let $n$ be odd. Since $\tau$ is admissible and $q\equiv 1\pmod 4$, Theorem \ref{t:graph} implies that  $n-2$ is not
a power of $p$. Then using Lemmas \ref{l:spec_cn_bn} and \ref{l:r-part}, it is not hard to check that $k\cdot\omega(Sp_{n-1}(q_0))\subseteq\omega(Sp_{n-1}(q))$.
Now applying Lemma \ref{l:graph_odd} yields 
 $$k\cdot \omega(\tau PSL_n(q_0))=2k\cdot\omega(Sp_{n-1}(q_0))\subseteq 2\cdot\omega(Sp_{n-1}(q))=\omega(\tau L)\subseteq \omega(L).$$

Let $n$ be even. Observe that $(d)_2=(q+1)_2=2$. By admissibility of $\tau$ and Theorem \ref{t:graph}, it follows that $n-1$ is not a $p$-power.
Also it is not hard to verify that $k\cdot\omega(\Omega_{n+1}(q_0))\subseteq\omega(\Omega_{n+1}(q))$ (cf. \cite[Theorem 1]{16Gr.t}).
Applying Lemma \ref{l:graph_even}, we see that $k\cdot \omega_{\tilde p}(\tau PSL_n(q_0))\subseteq \omega_{\tilde p}(\tau L)$ for $n>4$. Since $k(q_0\pm 1)$ divides $q-1$, the same is true for $n=4$.
Thus it remains to examine when $2k\cdot \omega_{p'}(P\Omega_n^\pm (q_0))\subseteq \omega(L))$, and we consider the numbers from Lemma \ref{l:spec_dn} in turn.

Let $a=[q_0^{n_1}\pm 1, q_0^{n_2}\pm 1, \dots, q_0^{n_s}\pm 1]$, where $s\geqslant 3$ and $2(n_1 + n_2 + \dots + n_s) = n$. Then $2ka$ divides $[q^{2n_1}-1, q^{2n_2}-1, \dots, q^{2n_s}-1]\in \omega(L)$.

Let $a=(q_0^{n/2}-\epsilon)/(4,q_0^{n/2}-\epsilon)$. If $n/2$ is even, then $2ka$ divides $q^{n/2}-1\in\omega(L)$.
Suppose that $n/2$ is odd. Then $r_{n/2}(\epsilon q)$ divides $a$ and lies in $R_{n/2}(q)=R_n(-q)$, and so $2ka\in \omega(L)$ id and only if 
$2ka$ divides $$c=\frac{q^n-1}{(q+1)d}=\frac{(q^{n/2}-1)(q^{n/2}+1)}{(q+1)d}.$$ Clearly, $(a)_{2'}$ divides $(c)_{2'}$. Since $(d)_2=2$, we see that $(c)_2=(q-1)_2/2$.
If $(4,q_0-\epsilon)=4$, then $2k(a)_2=k(q_0-\epsilon)_2/2=(q-1)_2/2.$ If $(4,q_0-\epsilon)=2$, then $2k(a)_2=2k\leqslant (q-1)_2/2$.

Finally, let $a=[q_0^{n_1}-\epsilon_1,q_0^{n_2}-\epsilon_2]/e$, where $2(n_1+n_2)=n$, $e=2$ if $(q^{n_1}-\epsilon_1)_2=(q^{n_2}-\epsilon_2)_2$ and $e=1$ if $(q^{n_1}-\epsilon_1)_2\neq(q^{n_2}-\epsilon_2)_2$.

Suppose that $n/2$ is odd. We may assume that $(q^{n_1}-\epsilon_1)_2\geqslant (q^{n_2}-\epsilon_2)_2$.
If $n_2$ is even, then $2ka$ divides $[q^{2n_1}-1,q^{n_2}-1]$. Let $n_2$ is odd. Then $n_1$ is even. If $n_1\neq (n_1)_2$, then
$2ka$ divides $[q^{n_1}-1,q^{(n_1)_2}-1,q^{2n_2}-1]$. If $n_1=(n_1)_2$ and $n_2=1$, then $2ka$ divides $q^{2n_1}-1$. If $(n_1)=(n_1)_2$ and $n_2>1$, then
by admissibility of $\tau$, we have $(n,q+1)_{2'}=1$ and, therefore, $(n/(2n_1,2n_2),q+1)=1$. So $[q^{2n_1}-1,q^{2n_2}-1]$ lies in $\omega(L)$, and it is divisible by $2ka$.

Suppose that $n/2$ is even and $n/2\geqslant 8$. We can take $n_1$ and $n_2$ to be odd coprime numbers larger than 1. 
Also we take $\epsilon_1=+$ and $\epsilon_2=-$. Then $a=[q_0^{n_1}]-1,q_0^{n_2}+1]$ and $2k(a)_2=k(q_0^2-1)_2=(q^2-1)_2$. Since $a$ is a multiple of 
both $r_{n_1}(q_0)$ and $r_{n_2}(-q_0)$ and $r_{n_i}(\pm q_0)\in R_{2n_i}(-q)$ for $i=1,2$, it follows that $2ka\in \omega(L)$ if and only if $2ka$ divides $c=[q_{2n_1}-1,q^{2n_2}-1]/(n/(2n_1,2n_2),q+1)$.
But $(c)_2=(q^2-1)_2/2<(a)_2$ because $n/2$ is even. Thus $\omega(\beta L)\not\subseteq\omega(L)$.

We are left with the cases $n=4,8,12$. If $n=4$, then $2a$ divides $q_0^2-1$, and hence $2ka$ divides $q^2-1$.
If $n=8$, then $a$ divides $[q_0^3\pm 1,q_0\pm 1]$ or $(q_0^4-1)/2$, and so $2ka$ divides $q^6-1$ or $q^4-1$.
If $n=12$, then $a$ divides $[q_0^5\pm 1,q_0\pm 1]$, $[q_0^4\pm 1,q_0^2\pm 1]$, or $(q_0^6-1)/2$, and $2ka$ divides $q^{10}-1$, $q^8-1$, or $q^6-1$.

We established that the condition for $\omega(\beta L)$ to be a subset of $\omega(L)$ depends only on $n$ and not on $q_0$, and thus $\beta$ is admissible if and only if $\omega(\beta L)\subseteq\omega(L)$.

Next, suppose that $\beta$ is admissible, $\gamma\in\langle \varphi\rangle$ is admissible and has odd order $l$ and let $q_0=q_1^l$. To prove that $\gamma\beta$ is admissible, it suffices to show that 
$\omega(\gamma\beta L)\subseteq \omega(L)$.  By Lemma \ref{l:graph_field}, if we regard $\tau$ as an automorphism of $PSU_n(q_1^k)$, it is still be admissible, and so by the above result, it follows that
$k\cdot\omega(\tau PSL_n(q_1))\subseteq \omega (PSU_n(q_1^k))$. By Lemma \ref{l:reduce}, we have
$$\omega(\gamma\beta L)=lk\cdot\omega(\tau PSL_n(q_1))\subseteq l\cdot \omega (PSU_n(q_1^k))=\omega(\gamma L)\subseteq \omega(L).$$
\end{proof}

We are in position to prove Theorem Theorem \ref{t:mainu}.

\begin{proof}[{\bf Proof of Theorem \ref{t:mainu}}] Suppose that $\omega(G)=\omega(L)$. By Lemmas \ref{l:direct} and \ref{l:S2u}, it follows that $G/L$ is conjugate to a subgroup in $\langle \overline\psi\rangle\times \langle \overline\varphi^{(m)_{2'}}\rangle$.
If $n-1$ is a $p$-power, then Lemmas \ref{l:graph_even}, \ref{l:oddpart} and \ref{l:oddpart4} implies that any non-trivial element of $G/L$ is not admissible. If $n-1$ is not a $p$-power, then
$\psi$ is admissible, and applying Lemma \ref{l:field_u} completes the proof.

\end{proof}

Now we assume that $\varepsilon=+$, and so $S_2=\langle \overline\eta\rangle\rtimes(\langle\overline\varphi^{m_{2'}}\rangle\times \langle\overline \tau\rangle)$.
Given $1\neq\beta\in\langle \varphi^{(m)_{2'}}\rangle$, we define $\beta_\epsilon$ with $\epsilon\in\{+,-\}$ by setting $\beta_+=\beta$ and $\beta_-=\beta\tau$.

\begin{lemma}\label{l:evenfield}
Let $\varepsilon =+$, $k>1$ divides $(m)_2$ and $q=q_0^k$. Then for any $\epsilon\in\{+,-\}$, the following hold:
\begin{enumerate}
 \item if $n$ is odd or $k$ does not divide $(q-1)/d$, then $k\cdot\omega(PSL_n^\epsilon(q_0))\not\subseteq \omega(L)$;
 \item if $n$ is even, $k$ divides $(q-1)/d$ and $n=1+p^{t-1}$, then $k\cdot\omega(PSL_n^\epsilon(q_0))\setminus\{kp^t\}\subseteq\omega(L)$;
 \item if $n$ is even, $k$ divides $(q-1)/d$ and $n-1$ is not a $p$-power, then $k\cdot\omega(PSL_n^\epsilon(q_0))\subseteq\omega(L)$.
\end{enumerate}
In particular, if $\beta=\varphi^{m/k}$, then $\beta_\epsilon$ is admissible if and only if $n$ is even,  $k$ divides $(q-1)/d$ and $n-1$ is not a $p$-power.
If $\beta_\epsilon$ is admissible and $\gamma\in \langle \varphi\rangle$ is admissible and has odd order, then $\gamma\beta_\epsilon$ is also admissible.
\end{lemma}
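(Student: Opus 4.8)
The first move is to convert the whole statement into a comparison of spectra over the smaller field via Lemma~\ref{l:reduce}. By Lemma~\ref{l:reduce}(i), $\omega(\beta_+L)=\omega(\varphi^{m/k}L)=k\cdot\omega(PSL_n(q_0))$, and since $k$ is a power of $2$ (being a divisor of $(m)_2$), Lemma~\ref{l:reduce}(ii) gives $\omega(\beta_-L)=\omega(\varphi^{m/k}\tau L)=k\cdot\omega(PSU_n(q_0))$; thus $\omega(\beta_\epsilon L)=k\cdot\omega(PSL_n^\epsilon(q_0))$ in both cases. Hence each of (i)--(iii) is exactly a (non\nobreakdash-)containment of $k\cdot\omega(PSL_n^\epsilon(q_0))$ in $\omega(L)=\omega(PSL_n(q))$ with $q=q_0^k$, and once these are proved the ``in particular'' is immediate, since an extension is admissible precisely when its spectrum lies in $\omega(L)$: by (i)--(iii) this forces $n$ even, $k\mid(q-1)/d$ and $n-1$ not a power of $p$; in the remaining case $n=1+p^{t-1}$ one checks that the only multiple of $p^t$ in $\omega(L)$ is $p^t$ itself (the families of Lemma~\ref{l:spec_an}(iv)--(v) would require dimensions summing past $n$), so the value $kp^t\in k\cdot\omega(PSL_n^\epsilon(q_0))$ is absent from $\omega(L)$. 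Throughout I use the elementary facts $q_0-\epsilon\mid q_0^2-1\mid q-1$, $(q-1)_2=(k/2)(q_0^2-1)_2$, and $2(q_0^c-\epsilon^c)\mid q^{2c}-1$ (the last verified $2$-adically through Lemma~\ref{l:r-part}).

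For the positive statements I go through the six families of Lemma~\ref{l:spec_an} for $PSL_n^\epsilon(q_0)$ and show that each number, times $k$, divides one of the numbers of the $\omega(L)$-list. An order dividing a multi-torus number $p^{t'}[q_0^{n_1}-\epsilon^{n_1},\dots,q_0^{n_s}-\epsilon^{n_s}]$ I bound by $p^{t'}[q^{2n_1}-1,\dots,q^{2n_s}-1]$, or, when the lengths allow, by $p^{t'}[q^{n_1/2}-1,\dots]$ using $q^{c}-1=q_0^{2c}-1$; an order dividing a single-torus number $p^{t'}(q_0^{n_1}-\epsilon^{n_1})/d_0$ I compare with $p^{t'}(q^{n_1}-1)/d$, and here the hypothesis $k\mid(q-1)/d$ is precisely what lets the extra factor $k$ fit against the growth of the diagonal index, i.e.\ against $(n,q^{n_1}-1)/(n,q_0^{n_1}-\epsilon^{n_1})$ (note that for $n$ even $n_1=(n-1)-p^{t'-1}$ is automatically even, which is what makes these identifications possible). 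The family of Lemma~\ref{l:spec_an}(vi) appears exactly when $n-1$ is a power of $p$ and contributes only $kp^t$, the one value that may fail to be in $\omega(L)$; this yields (ii), and its absence yields (iii). Keeping the $2$-adic and gcd estimates straight over these families is the longest and most delicate part of the proof.

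For (i) I produce explicit witnesses. If $n$ is odd, take the element of $PSL_n^\epsilon(q_0)$ of order $a=(q_0^n-\epsilon^n)/((q_0-\epsilon)d_0)$: a primitive prime divisor $r=r_n(\epsilon q_0)$ exists by Lemma~\ref{l:zsi} (as $q_0$ is odd) and divides $a$, and since $(n,k)=1$ one gets $r\in R_n(q)$; therefore any element of $\omega(L)$ divisible by $r$ has order dividing $(q^n-1)/((q-1)d)$, which is odd because $n$ is odd, whereas $ka$ is even, so $ka\in k\cdot\omega(PSL_n^\epsilon(q_0))\setminus\omega(L)$. If $n$ is even and $k\nmid(q-1)/d$, then $(q-1)_2$ (equivalently the value of $d$) is too small to absorb the factor $k$ in the single-torus family: one chooses an order $a$ of $PSL_n^\epsilon(q_0)$ — assembled from a primitive-prime-divisor element together with the full $2$-part (and, when needed, $p$-part) of an appropriate torus — for which $ka$ divides none of the numbers of Lemma~\ref{l:spec_an} for $PSL_n(q)$, the obstruction being a direct $2$-adic/gcd consequence of $k\nmid(q-1)/d$. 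Isolating the exact witness here is the main obstacle in the negative direction; it is a short but careful case distinction.

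For the final assertion, suppose $\beta_\epsilon$ is admissible and $\gamma\in\langle\varphi\rangle$ is admissible of odd order $l$, and write $q_0=q_1^l$. By (i)--(iii) applied to $L$, admissibility of $\beta_\epsilon$ forces $n$ even, $n-1$ not a power of $p$, and $k\mid(q-1)/d$; since $l$ is odd, replacing $q$ by $q_1^k$ leaves all three conditions in force (the odd exponent $l$ changes neither the $2$-parts of the $q^c-1$ nor $(n)_2$), so by the already-proved direction $\beta_\epsilon$ is admissible for $PSL_n(q_1^k)$, i.e.\ $k\cdot\omega(PSL_n^\epsilon(q_1))\subseteq\omega(PSL_n(q_1^k))$. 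Two applications of Lemma~\ref{l:reduce} then give
$$\omega(\gamma\beta_\epsilon L)=lk\cdot\omega(PSL_n^\epsilon(q_1))=l\cdot\bigl(k\cdot\omega(PSL_n^\epsilon(q_1))\bigr)\subseteq l\cdot\omega(PSL_n(q_1^k))=\omega(\gamma L)\subseteq\omega(L),$$
so $\gamma\beta_\epsilon$ is admissible, which finishes the proof.
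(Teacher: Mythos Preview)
Your overall strategy matches the paper's exactly: reduce via Lemma~\ref{l:reduce} to the containment $k\cdot\omega(PSL_n^\epsilon(q_0))\subseteq\omega(L)$, run through the families of Lemma~\ref{l:spec_an} for the positive direction, and exhibit a primitive-divisor witness for the negative one; your treatment of the final assertion is word-for-word the paper's. Two points in your sketch are not right, however.

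For the multi-torus families you propose bounding $p^{t'}[q_0^{n_1}-\epsilon^{n_1},\dots,q_0^{n_s}-\epsilon^{n_s}]$ by $p^{t'}[q^{2n_1}-1,\dots,q^{2n_s}-1]$. This number does not lie in $\omega(L)$: the dimension constraint in Lemma~\ref{l:spec_an}(v) is $p^{t'-1}+1+\sum n_i=n$, so doubling the $n_i$ overshoots. The paper instead uses $p^{t'}[q^{n_1}-1,\dots,q^{n_s}-1]$ (same partition, no doubling), which is in $\omega(L)$, and the divisibility $k(q_0^{n_i}-\epsilon^{n_i})\mid q^{n_i}-1=q_0^{kn_i}-1$ follows from the $2$-adic identity $(q_0^{kn_i}-1)_2=(k/2)(q_0^{2n_i}-1)_2\geqslant k(q_0^{n_i}-\epsilon^{n_i})_2$. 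Your alternative bound $[q^{n_i/2}-1,\dots]$ (and the identity $q^c-1=q_0^{2c}-1$) is only valid for $k=2$ and even $n_i$, so neither of your two options covers the general case. The two-term family (Lemma~\ref{l:spec_an}(ii)) also needs separate care when both $n_1,n_2$ are odd; the paper handles this by comparing with $[q^{n_1}-1,q^{n_2}-1]/(n/(n_1,n_2),q-1)$ and using $k\mid(q-1)/d$ to control the $2$-part.

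For (i) with $n$ even you leave the witness unspecified. The paper's choice is simply $a=r_{n-1}(\epsilon q_0)$: since $n-1$ is odd and $k$ is a $2$-power, $r_{n-1}(\epsilon q_0)\in R_{n-1}(q)$, so $ka\in\omega(L)$ would force $k\mid(q^{n-1}-1)/d$, and as $(q^{n-1}-1)_2=(q-1)_2$ this is exactly $k\mid(q-1)/d$. No torus-assembly or $p$-part is needed.
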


\begin{proof}

If $n$ is odd, then $2r_n(\epsilon q_0)\not\in\omega(L)$ since $r_n(\epsilon q_0)\in R_n(q)$
and $(q^n-1)/(q-1)$ is odd. If $n$ is even and $k$ does not divide $(q-1)/d$, then $kr_{n-1}(\epsilon q_0)\not\in\omega(L)$. Indeed, otherwise the fact that $r_{n-1}(\epsilon q_0)\in R_{n-1}(q)$ implies that 
$k$ divides $(q^{n-1}-1)/d$. But $(q^{n-1}-1)_2/(d)_2=(q-1)_2/(d)_2$, and (i) follows.

Let $n$ be even and $k$ divides $(q-1)/d$. We claim that $ka\in\omega(L)$ for all  $a\in\omega(PSL_n^\epsilon q_0))$ except $a=p^t$ for $n=1+p^{t-1}$.
The number $a$ divides one of the numbers in items (i)--(v) of Lemma \ref{l:spec_an}, and we consider these possibilities in turn. 

If $a=(q_0^n-1)/(q_0-\epsilon)(n,q_0-\epsilon)$, then $ka$ divides $q^{n/2}-1$. Similarly, if $a=[q_0^{n_1}-\epsilon^{n_1}, q_0^{n_2}-\epsilon^{n_2}]/(n/(n_1,n_2),q_0-\epsilon]$,
where $n_1+n_2=n$ and both $n_1$, $n_2$ are even, then $ka$ divides $[q^{n_1/2}-1,q^{n_2/2}-1]$. If $a=[q^{n_1}-\epsilon^{n_1}, q^{n_2}-\epsilon^{n_2}]/(n/(n_1,n_2),q_0-\epsilon]$, where $n_1$ and $n_2$  are odd,
then $ka$ divides $c=[q^{n_1}-1, q^{n_2}-1]/(n/(n_1,n_2),q-1]$. Indeed, if $q_0\equiv -\epsilon\pmod 4$, then $k(a)_2=k\leqslant (c)_2$, while if $q_0\equiv \epsilon\pmod 4$, then $k(q_0-\epsilon)_2=(q-1)_2\geqslant (k)_2(n)_2$,
which implies that $(q_0-\epsilon)_2\geqslant (n)_2$ and $k(a)_2=(q-1)_2/(n)_2=(c)_2$. Also, $a$ divides $c$ by Lemma \ref{l:gcd}.
Finally, if $a=[q_0^{n_1}-\epsilon^{n_1},\dots,q_0^{n_s}-\epsilon^{n_s}]$, where $s\geqslant 3$ and $n_1+\dots,+n_s=n$,
or $a=p^t[q_0^{n_1}-\epsilon^{n_1},\dots,q_0^{n_s}-\epsilon^{n_s}]$, where $s\geqslant 2$ and $1+p^{t-1}+n_1+\dots,+n_s=n$, then $ka$ divides $[q^{n_1}-1,\dots,q^{n_s}-1]$ or $p^t[q^{n_1}-1,\dots,q^{n_s}-1]$,
respectively. 

Let $\beta=\varphi^{m/k}$. By Lemma \ref{l:reduce}, we have $\omega(\beta_\epsilon L)=k\cdot\omega(PSL_n^\epsilon(q_0))$. Thus if
$\beta_\epsilon$ is admissible, then $n$ is even, $k$ divides $(q-1)/d$ and $n-1$ is not a $p$-power. Conversely, if all these three conditions are satisfied, then by the above 
$$\omega(\beta^{k_1}L)=(k/k_1)\cdot \omega(PSL_n(q_0^{k_1}))\subseteq\omega(L)$$ for every divisor $k_1$ of $k$, and hence $\beta_\epsilon$ is admissible. 

To prove the final assertion, it suffices to check that  $\omega(\gamma\beta_\epsilon L)\subseteq \omega(L)$ for admissible $\gamma$ and $\beta_\epsilon$.
Let $|\gamma|=l$ and $q_0=q_1^l$. Observing that $(q-1)_2=(q_1^{k}-1)_2$, we see that $k$ divides $(q-1)/d$ if and only if 
it divides $(q_1^{k}-1)/(n,q_1^{k}-1)$. Thus admissibility of $\beta_\epsilon$ yields $k\cdot \omega(PSL_n^\epsilon(q_1))\subseteq \omega(PSL_n(q_1^{k}))$. Applying Lemma \ref{l:reduce},
we have $$\omega(\gamma\beta_\epsilon L)=lk\cdot\omega(PSL_n^\epsilon(q_1))\subseteq l\cdot\omega(PSL_n(q_1^{k}))=\omega(\gamma L)\subseteq\omega(L),$$
and the proof is complete.
\end{proof}

\begin{lemma}\label{l:evendfield} Let $\varepsilon=+$, $n$ be even, $k>1$ divide $(m)_2$, $\beta=\varphi^{m/k}$ and $q=q_0^k$.
If $\alpha=\beta_\epsilon\eta^j$ is admissible, then $\overline\alpha$ is conjugate in $S_2$ to either $\overline\beta_\epsilon$ or $\overline\beta_\epsilon\overline\eta$,
with $q_0\equiv -\epsilon\pmod 4$ and $2k\leqslant (q-1)_2/(d)_2$ in the latter case. Furthermore, if $q_0\equiv -\epsilon\pmod 4$ and $2k\leqslant (q-1)_2/(d)_2$, then
\begin{enumerate}
 \item $\beta_\epsilon\eta$ is admissible if and only if $n$ cannot be represented as $1+p^{t-1}+2^u$ with $t,u\geqslant 1$ and, in addition, $k=2$ whenever
 $n-1$ is a $p$-power;
 \item if $\beta_\epsilon\eta$ is admissible and $\gamma\in\langle \varphi\rangle$ is admissible and has odd order, then $\gamma\beta_\epsilon\eta$ is admissible. 
\end{enumerate}
\end{lemma}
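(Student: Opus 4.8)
The plan is to convert, via Lemma~\ref{l:reduce}, the admissibility of $\beta_\epsilon\eta^j$ into a statement about the spectrum of a single coset of $PSL_n^\epsilon(q_0)$ in $PGL_n^\epsilon(q_0)$, and then compare this coset spectrum with $\omega(L)=\omega(PSL_n(q))$. Throughout, $q=q_0^k$, and $PSL_n^\epsilon$, $PGL_n^\epsilon$, $d_0=(n,q_0-\epsilon)$ are read with the sign $\epsilon$ of $\beta_\epsilon$, so $PSL_n^+(q_0)=PSL_n(q_0)$ and $PSL_n^-(q_0)=PSU_n(q_0)$. Since $k>1$ divides $(m)_2$, the integer $k$ is even, so Lemma~\ref{l:reduce}(i) (when $\epsilon=+$) and Lemma~\ref{l:reduce}(ii) (when $\epsilon=-$) give
\[\omega(\beta_\epsilon\eta^jL)=k\cdot\omega\bigl(\delta^{(d)_{2'}j}(\epsilon q_0)\,PSL_n^\epsilon(q_0)\bigr),\]
and the coset on the right depends only on $(d)_{2'}j$ modulo $d_0$; since $((d)_{2'},d_0)=(d_0)_{2'}$, it equals $PSL_n^\epsilon(q_0)$ exactly when $(d_0)_2\mid j$. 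Write $C_j$ for this coset; then $\beta_\epsilon\eta^j$ is admissible if and only if $k\cdot\omega(C_j)\subseteq\omega(L)$.

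First I would analyse the $S_2$-conjugacy action on the cosets $\overline\beta_\epsilon\overline\eta^j$. From $\overline\eta^{\overline\beta_\epsilon}=\overline\eta^{\epsilon q_0}$, $\overline\eta^{\overline\tau}=\overline\eta^{-1}$ and $\overline\eta^{\overline\varphi^{(m)_{2'}}}=\overline\eta^{p^{(m)_{2'}}}$, conjugation inside $S_2$ replaces $j$ modulo $(d)_2$ by $uj+v(1-\epsilon q_0)$ with $u\in\langle-1,p^{(m)_{2'}}\rangle$ and $v\in\mathbb Z$. If $q_0\equiv-\epsilon\pmod4$ then $(1-\epsilon q_0)_2=2$, so $j$ may be altered by any even number, and every $\overline\beta_\epsilon\overline\eta^j$ is $S_2$-conjugate to $\overline\beta_\epsilon$ (for $j$ even) or to $\overline\beta_\epsilon\overline\eta$ (for $j$ odd). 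If $q_0\equiv\epsilon\pmod4$ I would instead show that $k\cdot\omega(C_j)\not\subseteq\omega(L)$ for every nontrivial $C_j$: a largest semisimple element of $C_j$, of order roughly $(q_0^n-\epsilon^n)/((q_0-\epsilon)c)$ with $c=((d)_{2'}j,d_0)$, is divisible by a primitive prime divisor of $q_0^n-\epsilon^n$ (Lemma~\ref{l:zsi}), and comparing its $2$-part after multiplication by $k$ with the numbers of Lemma~\ref{l:spec_an} for $L$ (using Lemma~\ref{l:r-part} and $q\equiv1\pmod8$) produces a contradiction. This yields the first assertion; in the case $\overline\alpha\sim\overline\beta_\epsilon\overline\eta$ the same comparison applied to $C_1$ itself (the $2$-part of $k\cdot(q_0^n-\epsilon^n)/((q_0-\epsilon)(d_0)_{2'})$ must not exceed that of $(q^n-1)/((q-1)d)$) forces $q_0\equiv-\epsilon\pmod4$ and $2k\leqslant(q-1)_2/(d)_2$, while the case $\overline\alpha\sim\overline\beta_\epsilon$ is covered by Lemma~\ref{l:evenfield}.

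Assume now $q_0\equiv-\epsilon\pmod4$ and $2k\leqslant(q-1)_2/(d)_2$. To prove~(i) I would describe $\omega(C)$, where $C=C_1$ is the coset of matrices of $PGL_n^\epsilon(q_0)$ of a fixed non-square determinant: the decisive fact is that $C$ contains no unipotent element (a unipotent matrix has determinant $1$, a square), and more precisely a matrix lies projectively in $C$ if and only if its semisimple part has determinant in the prescribed coset. Feeding this into Lemma~\ref{l:spec_an} and checking term by term, as in the proofs of Lemmas~\ref{l:evenfield} and~\ref{l:field_u}, one obtains $k\cdot\omega(C)\subseteq\omega(L)$ unless $n=1+p^{t-1}+2^u$ with $t,u\geqslant1$, in which case the block matrix $\diag(J_{p^{t-1}+1},h_s)$, with $h_s$ a semisimple $2^u\times2^u$ block of non-square determinant, realises a projective order whose product with $k$ is not in $\omega(L)$; and when $n-1$ is itself a $p$-power a separate check (using item~(iv) of Lemma~\ref{l:spec_an} for the splitting $q=q_0^2$) shows that $k\cdot\omega(C)\subseteq\omega(L)$ holds precisely for $k=2$. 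Part~(ii) then follows the usual descent: if $\gamma\in\langle\varphi\rangle$ is admissible of odd order $l$ and $q_0=q_1^l$, then by Lemma~\ref{l:graph_field} the relevant automorphism remains admissible over $F_{q_1}$, so $k\cdot\omega(C')\subseteq\omega(PSL_n^\epsilon(q_1^k))$ for the analogous coset $C'$ over $F_{q_1}$, and two applications of Lemma~\ref{l:reduce} give $\omega(\gamma\beta_\epsilon\eta L)=lk\cdot\omega(C')\subseteq l\cdot\omega(PSL_n^\epsilon(q_1^k))=\omega(\gamma L)\subseteq\omega(L)$.

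The hard part is the middle step: giving a correct description of $\omega(C)$ — the spectrum of a coset rather than a group, whose missing orders are exactly those realisable only with square determinant — and then matching $k\cdot\omega(C)$ against $\omega(L)$ through all six cases of Lemma~\ref{l:spec_an} with careful control of $2$-parts (Lemma~\ref{l:r-part}) and of primitive prime divisors (Lemma~\ref{l:zsi}). In particular, pinning down the exact threshold $2k\leqslant(q-1)_2/(d)_2$, the exceptional family $n=1+p^{t-1}+2^u$, and the isolated $k=2$ phenomenon when $n-1$ is a $p$-power is where essentially all of the (routine but delicate) arithmetic goes.
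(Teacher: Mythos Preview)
Your overall plan matches the paper's: reduce via Lemma~\ref{l:reduce} to a coset spectrum $k\cdot\omega(C_j)$, handle $S_2$-conjugacy, and then test against $\omega(L)$ item by item through Lemma~\ref{l:spec_an}. However, your proposed test element for ruling out the case $q_0\equiv\epsilon\pmod4$ (and for extracting the bound $2k\leqslant(q-1)_2/(d)_2$) does not work. You take the Singer-type element of order $(q_0^n-\epsilon^n)/((q_0-\epsilon)c)$ and appeal to a primitive prime divisor $r_n(\epsilon q_0)$. Since $n$ is even and $k>1$ is a $2$-power, $(n,k)\geqslant 2$, so $r_n(q_0)$ lies in $R_{n/(n,k)}(q)$ rather than in $R_n(q)$; hence it does not force comparison with item~(i) of Lemma~\ref{l:spec_an}, and in fact $k$ times your element can already lie in $\omega(L)$. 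For a concrete failure take $n=4$, $k=2$, $\epsilon=+$, $q_0=5$, $j=1$: then $c=1$, your element has order $a=(5^4-1)/4=156$, and $ka=312$ divides $q^2-1=624\in\omega(PSL_4(25))$. The paper instead tests with an element of order $(q_0^{n-1}-\epsilon)/c$, arising from a block $\diag(g,*)$ with $g$ a Singer element of $GL_{n-1}^\epsilon(q_0)$; because $n-1$ is odd and therefore coprime to $k$, the divisor $r_{n-1}(\epsilon q_0)$ lands cleanly in $R_{n-1}(q)$, which pins the comparison to the single number $(q^{n-1}-1)/d$ and makes the $2$-part argument go through.

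A smaller point on part~(ii): Lemma~\ref{l:graph_field} concerns $\tau$ and is not the right citation here. What you actually need, and essentially write immediately afterward, is to re-apply part~(i) with $q_1$ in place of $q_0$: since $l$ is odd one has $q_1\equiv q_0\equiv -\epsilon\pmod4$ and $(q_1^k-1)_2=(q-1)_2$, so the hypotheses of~(i) persist over $F_{q_1}$ and yield $k\cdot\omega(C')\subseteq\omega(PSL_n(q_1^k))$ (note $PSL_n$, not $PSL_n^\epsilon$). The chain $\omega(\gamma\beta_\epsilon\eta L)=lk\cdot\omega(C')\subseteq l\cdot\omega(PSL_n(q_1^k))=\omega(\gamma L)\subseteq\omega(L)$ then finishes exactly as you wrote.
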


\begin{proof}[{\bf Proof}]  Note that $\overline\eta^{\overline\beta_\epsilon}=\overline\eta^{\epsilon q_0}$, and so 
\begin{equation}\label{e:conj}\overline\beta_\epsilon^{\overline\eta}=\overline\eta^{-1}\overline\beta_\epsilon\overline\eta=\overline\beta_\epsilon\overline\eta^{1-\epsilon q_0}.\end{equation}

Suppose that $\alpha=\beta_\epsilon\eta^j$ is admissible and $\overline\alpha$ is not conjugate in $S_2$ to  $\overline\beta_\epsilon$. Then (\ref{e:conj}) implies that $(q_0-\epsilon, (d)_2)$ does not divide $j$, or in other words  \begin{equation}\label{e:j}(j)_2<(q_0-\epsilon,d)_2.\end{equation}
Since $\eta=\delta^{(d)_{2'}}$, applying Lemma \ref{l:reduce} yields  \begin{equation}\label{e:spectrum}\omega(\alpha L)=k\cdot \omega(\delta^{j(d)_{2'}}(\epsilon q_0)PSL_n^\epsilon(q_0)).\end{equation}

The index of $\delta^{j(d)_{2'}}(\epsilon q_0)PSL_n^\epsilon(q_0)$ in $PGL_n^\epsilon(q_0)$ is equal to $(j(d)_{2'},q_0-\epsilon)$. There is an element of order $q_0^{n-1}-\epsilon$ in $PGL_n(q_0)$, and  hence writing $a=(q_0^{n-1}-\epsilon)/(j(d)_{2'},q-\epsilon)$, we see that $ka\in\omega(\alpha L)$. Since $a$ is a multiple of $r_{n-1}(\epsilon q_0)\in R_{n-1}(q)$ and lies in $\omega(L)$, it follows that
$a$ divides $c=(q^{n-1}-1)/d$. If $q_0\equiv \epsilon \pmod 4$, then by (\ref{e:j}), we deduce that $$(a)_2=k(q_0-\epsilon)_2/(j)_2=(q-1)_2/(j)_2>(q-1)_2/(d)_2=(c)_2.$$
Thus $q_0\equiv -\epsilon \pmod 4$. Then $(q_0-\epsilon,d)_2=2$, and so $j$ is odd. In particular, (\ref{e:conj}) shows that $\alpha$ is conjugate to $\beta_\epsilon\eta$.
Also, $(a)_2=2k\leqslant (c)_2=(q-1)_2/(d)_2$, and the first assertion is proved. 

Next, let $\alpha=\beta_\epsilon\eta$, $q_0\equiv -\epsilon\pmod 4$ and $2k\leqslant (q-1)_2/(d)_2$. Observing that 
$(q-1)_2=k(q_0+\epsilon)_2$, we deduce from the last inequality that $2(d)_2\leqslant (q_0+\epsilon)_2$, and hence $\overline\alpha^2=\overline\beta^2\overline\eta^{1+\epsilon q_0}=\overline\beta^2$.
Since $2k\leqslant (q-1)_2/(d)_2$, it follows from Lemma \ref{l:evenfield} that $\beta^2$ is admissible if and only if $k=2$ or $n-1$ is not a $p$-power.

Suppose that $n=1+p^{t-1}+2^u$ with $t,u\geqslant 1$. Then $PGL_n^\epsilon(q_0)$ has an element of order $p^t(q_0^{2^u}-1)$, and so  $ka\in\omega (\alpha L)$, where $a=p^t(q_0^{2^u}-1)/(d,q_0-\epsilon)_{2'}$.
Since $ka$ is a multiple of $kp^t(q_0^{2^u}-1)_2=p^t(q^{2^u}-1)_2$, it lies in $\omega(L)$ only if it divides  $p^t(q^{2^u}-1)/d$, which is not the case. 

To prove (i), it remains to verify that $\omega(\alpha L)\subseteq \omega (L)$ whenever $n$ cannot be represented as $1+p^{t-1}+2^u$ with $t,u\geqslant 1$. SInce 
$((d)_{2'},q_0-\epsilon)=(n,q_0-\epsilon)_{2'}=(n,q_0-\epsilon)/2$, it follows from (\ref{e:spectrum}) that $$\omega(\alpha L)=k\cdot \omega (2PSL_n^\epsilon(q_0)\setminus PSL_n^\epsilon(q_0)),$$
where $2PSL_n^\epsilon(q_0)$ denotes the unique subgroup of $PGL_n^\epsilon(q_0)$ that contains $PSL_n(q_0)$ as a subgroup of index 2. Denote $\omega (2PSL_n^\epsilon(q_0)\setminus PSL_n(q_0))$
by $\omega$.

If $n=p^{t-1}+1$, then any element of $GL_n(q)$ whose order is divisible by $p^t$ is conjugate to a scalar multiple of the $n\times n$ unipotent Jordan block, therefore, 
$p^t\not\in\omega$. By Lemma \ref{l:evenfield}, we have $$k\cdot(\omega\cap \omega(PSL_n^\epsilon(q_0)))\subseteq \omega(L),$$ and hence we are left with elements of $\omega\setminus\omega(PSL_n^\epsilon(q_0))$.
By Lemma \ref{l:spec_an}, it follows that this difference consists of some divisors of the following numbers:
\begin{gather*}
2(q_0^n-1)/((q_0-\epsilon)(n,q_0-\epsilon)),\\
[q_0^{n_1}-\epsilon^{n_1}, q^{n_2}-\epsilon^{n_2}]/(n/(n_1,n_2), q_0-\epsilon), \text{ where  $n_1,n_2$ are odd and } n_1 + n_2= n; \\
2p^t(q_0^{n_1}-\epsilon^{n_1})/(n,q_0-\epsilon), \text{ where $n_1>0$ and } p^{t-1}+1+n_1=n.
\end{gather*}

Let $a=2(q_0^n-1)/((q_0-\epsilon)(n,q_0-\epsilon))$. Then
$$k(a)_2=2k\frac{(q_0^n-1)_2}{(q_0-\epsilon)_2(q_0-\epsilon,n)_2}=(q^n-1)_2/2=(q^{n/2}-1)_2,$$
and so $ka$ divides $q^{n/2}-1\in \omega(L)$.

Let $a=2[q_0^{n_1}-\epsilon^{n_1}, q^{n_2}-\epsilon^{n_2}]/(n/(n_1,n_2), q_0-\epsilon)$, where $n_1$, $n_2$ are odd and $n_1+n_2=n$. Consider the number $c=[q^{n_1}-1,q^{n_2}-1]/(n/(n_1,n_2),q-1)$ lying in $\omega(L)$.
By Lemma \ref{l:gcd},  both numbers $(q^{n_i}-1)/(n/(n_1,n_2), q_0-\epsilon)$, where $i=1,2$, divide $c$. Also, $k(a)_2=2k\leqslant (q-1)_2/d_2=(c)_2$, and hence $ka$ divides  $c$.

Let $a=2p^t(q_0^{n_1}-\epsilon^{n_1})/(n,q_0-\epsilon)$, where $n_1>0$ and $p^{t-1}+1+n_1=n$. Note that $n_1$ is even and by hypothesis, $(n_1)_2\neq n_1$. Since 
$$k(a)_2=2k(q_0^{n_1}-1)_2/(q_0-\epsilon)_2=(q^{n_1}-1)_2=(q^{(n_1)_2}-1)_2,$$ we see that $ka$ divides $p^t[q^{n_1}/2-1, q^{(n_1)_2}-1]\in\omega(L)$.

To prove (ii), take admissible $\gamma$ and $\alpha$. We may assume that $n-1$ is not a $p$-power because otherwise $\gamma=1$ by Lemmas \ref{l:oddpart} and \ref{l:oddpart4}.
Then by the equality $\overline\alpha^2=\overline\beta^2$ and Lemma \ref{l:evendfield}, to prove that $\gamma\alpha$ is admissible, it suffices to check that $\omega(\gamma\alpha L)\subseteq \omega(L)$.
Denote $|\gamma|$ by $l$ and let $q_0=q_1^l$. Observe that $q_1\equiv q_0\equiv -\epsilon\pmod 4$ and $2k$ divides $(q_1^k-1)_2/(n,q_1^k-1)_2$.

By Lemma \ref{l:reduce}, we have $$\omega(\gamma\alpha L)=lk\cdot \omega(\delta^{(d)_{2'}}(\epsilon q_1)PSL^\epsilon_n(q_1)).$$
Using the fact that $(d,q_1-\epsilon)_{2'}=(q_1-\epsilon)/2$ and the above results, we deduce that $$k\cdot\omega(\delta^{(d)_{2'}}(\epsilon q_1)PSL^\epsilon_n(q_1))=k\cdot\omega(2PSL_n^\epsilon(q_1)\setminus PSL_n^\epsilon(q_1))\subseteq \omega(PSL_n(q_1^k)).$$
Thus 
$$\omega(\gamma\alpha L)\subseteq l\cdot\omega(PSL_n(q_1^k)))=\omega(\gamma L)\subseteq \omega(L),$$
and the proof is complete.
\end{proof}

We are now ready to describe admissible 2-subgroups of $\Out L$ in the case of linear groups, and then prove Theorem \ref{t:main}. Recall that $b=((q-\varepsilon)/d,m)_d$ and define $\phi=\varphi^{m/(b)_2}$.

\begin{lemma}\label{l:S2l} Let $\varepsilon=+$ and $1<G/L\leq S_2=\langle \overline \eta\rangle\rtimes (\langle \overline\varphi^{(m)_{2'}}\rangle\times \langle \overline\tau\rangle)$. Then
$\omega(G)=\omega(L)$ if and only if $G/L$ is cyclic and up to conjugacy in $S_2$ is generated by the image of one of the following elements:
\begin{enumerate}
 \item $\tau$ if $b$ is odd and $\tau$ is admissible;
 \item $\beta_\pm$, where $1\neq\beta\in\langle \phi\rangle$, if $b$ is even and $n-1$ is not a $p$-power;
 \item $\beta\tau\eta$, where  $1\neq\beta\in\langle \phi^2\rangle$, if $(b)_2>2$ and $n$ cannot be represented as $1+p^{t-1}$ or $1+p^{t-1}+2^u$ with $t,u\geqslant 1$;
 \item $\phi_{-\kappa}\eta$, where $p\equiv \kappa\pmod 4$, if  $(p-\kappa)_2>(n)_2$ and  $n$ cannot be represented as $1+p^{t-1}$ or $1+p^{t-1}+2^u$ with $t,u\geqslant 1$;
 \item $\varphi^{m/2}\tau\eta$, if $(b)_2>2$, $n=1+p^{t-1}$ for some $t\geqslant 1$ and $n$ cannot be represented as $2+2^u$ with $u\geqslant 1$;
 \item $(\varphi^{m/2})_{-\kappa}\eta$, where $p\equiv\kappa\pmod 4$, if $(m)_2=2$, $(p-\kappa)_2>(n)_2$, $n=1+p^{t-1}$ for some $t\geqslant 1$ and $n$ cannot be represented as $2+2^u$ with $u\geqslant 1$.
\end{enumerate}
\end{lemma}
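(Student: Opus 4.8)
The plan is to reduce the whole statement to Lemmas~\ref{l:evenfield} and~\ref{l:evendfield}, which already describe the spectra of the cosets $\beta_\epsilon L$ and $\beta_\epsilon\eta^j L$, after first showing that an admissible $G/L$ must be cyclic with its field part contained in $\langle\overline\phi\rangle$. Write $A=G/L$. Since $\omega(L)\subseteq\omega(H)\subseteq\omega(G)$ whenever $L\le H\le G$, admissibility passes to subgroups, so it suffices to determine the maximal admissible subgroups of $S_2$; and by Lemma~\ref{l:diag}, $A$ is abelian and meets $\langle\overline\delta\rangle$, hence $\langle\overline\eta\rangle$, trivially, so the projection $A\to S_2/\langle\overline\eta\rangle=\langle\overline\varphi^{(m)_{2'}}\rangle\times\langle\overline\tau\rangle$ is injective. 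I will also use two arithmetic observations: $b$ is even precisely when $d$, $(q-1)/d$ and $m$ are all even, in which case $n$ is even (as $d\mid n$), $(n)_2<(q-1)_2$ and $q\equiv1\pmod4$, so Theorem~\ref{t:graph}(iv) shows $\tau$ is not admissible; and $b$ is odd precisely when $\phi=1$.

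First I would pin down the field part of $A$. Suppose $\overline\varphi^{m/k}$ with $k>1$ lies in the image of $A$ under projection to $\langle\overline\varphi^{(m)_{2'}}\rangle$; choose $\alpha\in G$ above it and apply Lemma~\ref{l:evenfield} or Lemma~\ref{l:evendfield} to $\langle L,\alpha\rangle$ (in the latter case after conjugating within $S_2$ to one of the shapes $\overline\varphi^{m/k}\overline\tau^{\ell}$ or $\overline\varphi^{m/k}\overline\tau^{\ell}\overline\eta$ allowed there). These force $n$ to be even and $k$ to divide $(q-1)/d$; since then $d$ is even, the $2$-power $k$ divides $((q-1)/d,m)_d=b$, so $k\mid(b)_2$. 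Hence the image of $A$ in $\langle\overline\varphi^{(m)_{2'}}\rangle$ lies in $\langle\overline\phi\rangle$. When $b$ is odd this image is trivial, so $A\le\langle\overline\eta,\overline\tau\rangle$, and Lemma~\ref{l:diag} gives that $A$ is trivial or conjugate to $\langle\overline\tau\rangle$; in the latter case $\tau$ is admissible, which is item~(i). From now on $b$ is even, so $n$ is even and $\tau$ is not admissible.

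Now I would show $A$ is cyclic. If not, its injective image in $\langle\overline\varphi^{(m)_{2'}}\rangle\times\langle\overline\tau\rangle$ contains the whole $2$-torsion of that group, in particular the element $\overline\tau$, so $A$ contains some $\overline\eta^{i}\overline\tau$. But $\overline\eta^{i}\overline\tau$ is an involution of $\langle\overline\eta\rangle\rtimes\langle\overline\tau\rangle$, conjugate there to $\overline\tau$ if $i$ is even and to $\overline\eta\,\overline\tau=\overline\delta^{(d)_{2'}}\overline\tau\sim\overline\delta\,\overline\tau$ if $i$ is odd; the first alternative makes $\tau$ admissible, contradicting $b$ even, and the second makes $\langle\overline\delta\,\overline\tau\rangle$ admissible, contradicting Lemma~\ref{l:graph_diag}. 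So $A=\langle\overline\alpha\rangle$ is cyclic. Writing $\overline\alpha=\overline\varphi^{m/k}\overline\tau^{\ell}\overline\eta^{j}$ with $k\mid(b)_2$ and $\beta=\varphi^{m/k}\in\langle\phi\rangle$ --- so $\overline\alpha=\overline{\beta_\epsilon}\,\overline\eta^{j}$ with $\epsilon=+$ for $\ell=0$ and $\epsilon=-$ for $\ell=1$ --- Lemma~\ref{l:evendfield} shows that admissibility of $\langle L,\alpha\rangle$ forces $\overline\alpha$ to be conjugate in $S_2$ either to $\overline{\beta_\epsilon}$ or to $\overline{\beta_\epsilon}\,\overline\eta$, the latter only when $q_0\equiv-\epsilon\pmod4$ and $2k\le(q-1)_2/(d)_2$, where $q=q_0^{k}$.

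The remaining work --- and the main obstacle --- is the bookkeeping translating these two possibilities into items (ii)--(vi). The case $k=1$ is ruled out by the involution argument above, so $k>1$. If $\overline\alpha\sim\overline{\beta_\epsilon}$, then $\beta_\epsilon$ is admissible, and the criterion of Lemma~\ref{l:evenfield} forces $n-1$ not to be a power of $p$ (else $kp^{t}\in\omega(\beta_\epsilon L)\setminus\omega(L)$), which is item~(ii). If $\overline\alpha\sim\overline{\beta_\epsilon}\,\overline\eta$, I would split on $k$: for $k\le(b)_2/2$, i.e.\ $\beta\in\langle\phi^2\rangle$ (whence $(b)_2>2$), the inequality $2k\le(b)_2\le(q-1)_2/(d)_2$ holds automatically, so Lemma~\ref{l:evendfield}(i) yields item~(iii) when $n-1$ is not a power of $p$ and forces $k=2$, yielding item~(v), when $n-1$ is a power of $p$; for $k=(b)_2$, i.e.\ $\beta=\phi$, a short computation of $(q-1)_2=(p-\kappa)_2(m)_2$ and $(d)_2=(n)_2$ with $p\equiv\kappa\pmod4$ turns $2(b)_2\le(q-1)_2/(d)_2$ into the stated condition $(p-\kappa)_2>(n)_2$, yielding item~(iv), or item~(vi) in the range $(m)_2=2$ where $\phi=\varphi^{m/2}$. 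Conversely, for each $\alpha$ in the list one checks that its stated hypotheses are precisely those under which Lemma~\ref{l:evenfield} or~\ref{l:evendfield} guarantees $\omega(\langle L,\alpha\rangle)=\omega(L)$, and that the resulting subgroups are maximal and pairwise non-conjugate. The delicate point is not any single estimate but keeping the three independent parities --- of $(q-1)/d$, of $m$ and of $n$ --- straight, and correctly reading off which reflection class $\overline\eta^{i}\overline\tau$ belongs to and which sign $\epsilon$ or $\kappa$ occurs; that combinatorial case split is where I expect the proof to be most error-prone.
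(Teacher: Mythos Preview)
Your proposal is correct and follows essentially the same approach as the paper: reduce everything to Lemmas~\ref{l:evenfield} and~\ref{l:evendfield} after establishing cyclicity of $G/L$ via Lemma~\ref{l:diag} and the non-admissibility of $\tau$ when $b$ is even. The only organizational difference is that you split the $\beta_\epsilon\eta$ case according to whether $k\le(b)_2/2$ or $k=(b)_2$, whereas the paper splits according to whether $(p-\kappa)_2\le(n)_2$ or $(p-\kappa)_2>(n)_2$; these partitions coincide (in the first range $(b)_2=(q-1)_2/(d)_2$ so only $k\le(b)_2/2$ survives the inequality $2k\le(q-1)_2/(d)_2$, while in the second $(b)_2=(m)_2$ and all $k\le(b)_2$ survive), and both yield exactly items~(iii)--(vi). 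One small slip: your closing remark that ``the resulting subgroups are maximal and pairwise non-conjugate'' is not what the lemma asserts --- items~(ii) and~(iii) list \emph{all} nontrivial $\beta\in\langle\phi\rangle$ (resp.\ $\langle\phi^2\rangle$), so the listed cyclic groups are nested, not maximal; the correct converse is simply that each listed $\alpha$ is admissible under its stated hypotheses, which is exactly what Lemmas~\ref{l:evenfield} and~\ref{l:evendfield} provide.
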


\begin{proof}[{\bf Proof}] 
Observe that $b$ is even if and only if all the numbers $n$, $m$ and $(q-1)/d$ are even.

Suppose that $n$ is odd. Then $S_2=\langle \overline\varphi^{(m)_{2'}}\rangle \times \langle \overline\tau\rangle$. By Lemma \ref{l:evenfield}, any non-trivial element of $\langle \varphi^{(m)_{2'}}\rangle$ is not admissible.
and thus the only potentially admissible subgroup of $S_2$ is $\langle\overline\tau\rangle$.

Suppose that $n$ is even but at least one of $m$ and $(q-1)/d$ is odd. It follows from Lemmas \ref{l:evenfield} and \ref{l:evendfield} that every admissible subgroup of $S_2$ is contained in $\langle \overline \eta\rangle\rtimes \langle \overline\tau\rangle$. Then by Lemma \ref{l:diag}, it is conjugate to a subgroup of $\langle\overline\tau\rangle$, and we are done in the case of odd $b$.

Suppose now that all the numbers $n$, $m$ and $(q-1)/d$ are even. In particular,  $(q-1)_2>(d)_2=(n)_2$ and so $\tau$ is not admissible by Theorem \ref{l:graph_even}. Then by Lemma \ref{l:diag}, we obtain that all non-trivial elements of  $\langle \overline \eta\rangle\rtimes \langle \overline\tau\rangle$ are not admissible. It follows that every admissible subgroup of $S_2$ is isomorphic to a subgroup of $\langle \overline\varphi^{(m)_{2'}}\rangle$, 
and hence is cyclic.

Let $\overline\alpha\in S_2\setminus \langle \overline \eta\rangle\rtimes \langle \overline\tau\rangle$ is admissible. We may assume that $\alpha=\beta_\epsilon\eta^j$, where
$\beta=\varphi^{m/k}$ for some $1<k=2^l\leqslant (m)_2$. By Lemmas  \ref{l:evenfield} and \ref{l:evendfield}, admissibility of $\alpha$ yields $k\leqslant (q-1)_2/(d)_2$ or, equivalently, $\beta\in\langle \phi\rangle$.
Moreover, all conjugates of $\overline\beta_\epsilon$ are admissible whenever $n-1$ is not a $p$-power, and this gives the automorphisms in (ii).

It remains to consider the case when $\overline\alpha$ is not conjugate to $\overline\beta_\epsilon$.  In this case by Lemma \ref{l:evendfield}, we may assume that $\alpha=\beta_\epsilon\eta$, \begin{equation}\label{e:2k}2k\leqslant (q-1)_2/(d)_2\end{equation} and $q_0\equiv -\epsilon\pmod 4$, where $q=q_0^k$. Moreover, $k=2$ whenever $n-1$ is a $p$-power. Note that $(q-1)_2=(m)_2(p-\kappa)_2$, where $p\equiv \kappa\pmod 4$.

Suppose that $(q-1)_2/(d)_2\leqslant (m)_2$ or, in other words, $(p-\kappa)_2\leqslant (n)_2$. Then $k\leqslant(m)_2/2$, so $q_0\equiv 1\pmod 4$ and the condition (\ref{e:2k}) is equivalent to $\beta\in\langle \phi^2\rangle$. Thus  $(b)_2>2$ and $\alpha=\beta\tau\eta$ with $\beta\in\langle \phi^2\rangle$. In particular, if $n-1$ is a $p$-power, then $\alpha=\varphi^{m/2}\tau\eta$.

Suppose that $(q-1)_2/(d)_2>(m)_2$ or, equivalently, $(p-\kappa)_2>(n)_2$. Then the condition (\ref{e:2k}) holds for all $\beta\in \langle\phi\rangle$. Also, $q_0\equiv 1\pmod 4$ if $k<(m)_2$ and $q_0\equiv \kappa\pmod 4$ if $k=(m)_2$. Thus $\alpha=\beta\tau\eta$ with $\beta\in\langle \phi^2\rangle$ or $\alpha= \phi_{-\kappa}\eta$. In particular, if $n-1$ is a $p$-power, then $\alpha=\varphi^{m/2}\tau\eta$ when $(b)_2=(m)_2>2$ and $\alpha=\phi_{-\kappa}\eta$ when $(b)_2=(m_2)=2$.

Conversely, let $\alpha$ be one of the resulting automorphisms and let the associated conditions be satisfied. Then  by Lemma \ref{l:evendfield}, $\alpha$ is admissible unless $n$ cannot be represented as $1+p^{t-1}+2^u$.
If $n=1+p^{t-1}$ and $n=1+p^{s-1}+2^u$, then $p^{t-1}=p^{s-1}+2^u$ or, equivalently, $p^{s-1}(p^{t-s}-1)=2^u$, and so $s=1$ and $n=2+2^u$. Thus we obtain the automorphisms in (iii)--(vi), and the proof is complete.
\end{proof}

\begin{proof}[{\bf Proof of Theorem \ref{t:main}}] Let $\omega(G)=\omega(L)$. By Lemma \ref{l:direct}, it follows that $G/L$ is conjugate to a subgroup of $\langle \overline\psi\rangle\times S_2$. Admissible subgroups of $\langle \overline\psi\rangle$ and $S_2$ are described in Lemmas \ref{l:oddpart}, \ref{l:oddpart4} and Lemma \ref{l:S2l} respectively. These descriptions together with Lemmas \ref{l:evenfield} and \ref{l:evendfield} imply that the product of an admissible subgroup of $\langle \overline\psi\rangle$ and an admissible subgroup of $S_2$ is also admissible, and this completes the proof. 
\end{proof}

\end{document}